\renewcommand{\epsilon}{\varepsilon}
\renewcommand{\phi}{\varphi}
\let\thmref\ref
\renewcommand{\ref}[1]{(\thmref{#1})}
\newcommand{\eq}[1]{\begin{equation}#1\end{equation}}
\newcommand{\eqN}[1]{\begin{equation*}#1\end{equation*}}
\newcommand{\vertiii}[1]{{\vert\kern-0.25ex\vert\kern-0.25ex\vert #1\vert\kern-0.25ex\vert\kern-0.25ex\vert}}
\numberwithin{equation}{section}
\newtheorem*{thm*}{Theorem}
\newtheorem{thm}{Theorem}[section]
\newtheorem{propo}[thm]{Proposition}
\newtheorem{lemma}[thm]{Lemma}
\newtheorem{coro}[thm]{Corollary}
\theoremstyle{definition}
\newtheorem{defn}[thm]{Definition}
\newtheorem{exmp}[thm]{Example}
\theoremstyle{remark}
\newtheorem{remark}[thm]{Remark}
\title[Logarithmic Schr\"odinger equations in infinite dimensions]{Logarithmic Schr\"odinger equations in infinite dimensions}
	\author{Larry Read} 
	\address{Larry Read, Department of Mathematics, Imperial College London, London SW7 2AZ, UK}
	\email{l.read19@imperial.ac.uk}
	\author{Boguslaw Zegarlinski}
	\address{Boguslaw Zegarlinski, Institut de Mathématiques de Toulouse, UMR5219, CNRS}
	\email{boguslaw.zegarlinski@cnrs.fr}
    \author{Mengchun Zhang} 
	\address{Mengchun Zhang, Department of Mathematics, Imperial College London, London SW7 2AZ, UK}
	\email{mengchun.zhang15@imperial.ac.uk}
    \date{}
    \subjclass[2010]{Primary: 35Q82; Secondary: 35A23}
\begin{document}

\begin{abstract}
    We study the logarithmic Schr\"odinger equation with finite range potential on $\mathbb{R}^{\mathbb{Z}^d}$. Through a ground-state representation, we associate and construct a global Gibbs measure and show that it satisfies a logarithmic Sobolev inequality. We find estimates on the solutions in arbitrary dimension and prove the existence of weak solutions to the infinite-dimensional Cauchy problem.
\end{abstract}

\maketitle

\raggedbottom 


\section{Introduction}

Consider the logarithmic Schr\"odinger equation (LSE) given by

\eq{i \partial_t \psi = H \psi \equiv - \Delta \psi + V \psi + \lambda \psi \log
\frac{| \psi |^2}{\int | \psi |^2 d_{} \mathbf{x}} \label{LSEPsi}}
with $\lambda \in \mathbb{R}$ and where $\Delta$ denotes the Laplacian in
$\mathbb{R}^n$. 
It has the following property of statistical independence for non interacting systems. Suppose the potential can be written as $V(\mathbf{x}) = V_1 (\mathbf{x}_1) + V_2 (\mathbf{x}_2)$, where $V_j$ depends on
coordinates $\mathbf{x}_j$, $j = 1, 2$, such that $\{\mathbf{x}_1 \} \cap \{
\mathbf{x}_2\} = \varnothing$ and $\mathbf{x} = (\mathbf{x}_1, \mathbf{x}_2)
\in \mathbb{R}^{n}$. Let
\[H_j \psi_j \equiv - \Delta_j \psi _j+ V_j \psi_j+ \lambda \psi_j\log\frac{| \psi_j
|^2}{\int|\psi_j|^2d\mathbf{x}_j}\]
with $\Delta_j \equiv \Delta_{\{\mathbf{x}_j\}}$. Then, for an initial condition $\psi(\mathbf{x},0) = \psi_1(\mathbf{x}_1,0)\psi_2(\mathbf{x}_2,0)$, the function $\psi = \psi_1 \psi_2$ solves the equation \eqref{LSEPsi}
\[i \partial_t \psi = H \psi = (\psi_2 H_1 \psi_1 + \psi_1 H_2 \psi_2).\]
Thus the probability density $| \psi |^2$ of a composite system, consisting of two non-interacting subsystem,
is described by the product of probability densities $| \psi_j |^2, j = 1, 2$,
describing both systems separately. Non-interacting systems remain statistically independent. Furthermore, for the solutions of the above equations the normalisation $\int | \psi |^2d\mathbf{x} = 1$ is independent of time and we have the following physical property of additivity of energy
\[ \langle \psi, H \psi\rangle =\sum_j\langle \psi_j, H_j \psi_j\rangle.\]

To study the asymptotic stability of large interacting systems we investigate an analog of the ground state representation. To this end, for the equation \eqref{LSEPsi} we would like to consider the solution $\psi\equiv\phi e^{-\frac12U}$ with some $U:\mathbb R^n\to\mathbb R$ independent of time. By choosing 
\eq{-\frac{1}{2} \Delta U + \frac{1}{4} | \nabla U |^2 + \lambda U = V \label{UV}}
we can rewrite our equation as follows
\eq{i \partial_t \phi = -\mathcal L \phi + \lambda \phi \log \frac{| \phi |^2}{\mu | \phi 
|^2} \label{GSLSE}}
with the linear operator $\mathcal L\equiv\Delta-\nabla U\cdot\nabla$ and the measure $\mu (\phi) \equiv \int \phi e^{-U}d\mathbf x$.

Later on we examine (\ref{GSLSE}) in finite and infinite dimensional spaces, therefore it will be vital that the formal Gibbs measure $d\mu \equiv e^{-U}d\mathbf{x}$ can be well defined as a probability measure. In particular, to discuss the equation in an infinite dimensional setting, where the Lebesgue measure is not well defined, it will be necessary to have a reference probability measure. For this we will need to add some technical conditions. First of all we note that any potential energy function $V$  
is required to have an additive structure, which includes a local external potential as well as a multi-particle interaction. Since the relation between $V$ and $U$ is nonlinear, even an additive  structure of $U$ may not necessarily provide a stable $V$. However we notice that this can be achieved if $U$ itself is finite range. This provides us with an interesting family of systems we will consider in this paper; however more general cases should be discussed elsewhere.

The LSE plays an important role in the class of nonlinear Schr\"odinger equations. Indeed, the equations with logarithmic nonlinearity admit a wide range of applications, 
for example, in quantum mechanics \cite{BBM1,BBM2}, quantum optics  \cite{BSSSSCh}, modelling magma transport  \cite{DeMFGL}, nuclear physics \cite{H}, transport and diffusion
phenomena \cite{HAL}, 
 information theory \cite{Br,Y},
 quantum gravity \cite{Zl}, theory
of Bose–Einstein condensation \cite{AZ} and others.

Over a long period, including a number of very recent publications, the LSE was extensively studied in mathematical literature. 
In particular, many results on existence and smoothness of solutions to different forms of the LSE have been provided (see \cite{A1}, \cite{A2}, \cite{BCST},   \cite{CG}, \cite{CS}, \cite{C1}, \cite{CH1}, \cite{CH2}, \cite{DAMS}, 
\cite{F1},
\cite{F2}, \cite{GLN},  \cite{HMN},  \cite{T}). These works concern finite-dimensional configuration space, which rules out the systems with infinitely many particles. This paper aims to explore such systems by introducing an infinite-dimensional setup for the LSE in the context of statistical mechanics.

In Section \ref{2-Setup} we introduce the infinite-dimensional configuration space $\Omega\equiv\mathbb R^{\mathbb Z^d}$ and finite-range potential $U$. We present a construction of the associated global Gibbs measure $\mu$ and the operator $\mathcal L$ corresponding to the Dirichlet form in $ L^2(\mu)$. In Section \ref{LSI}, we show that under a set of well-chosen assumptions the measure $\mu$ satisfies a logarithmic Sobolev inequality.
\begin{thm} \label{muLSISimplified}
    The global Gibbs measure $\mu$ satisfies a log-Sobolev inequality on $\Omega=\mathbb{R}^{\mathbb{Z}^d}$ (under suitable growth conditions on $U$), that is, for all $\phi\in H^1(\mu)$ 
    \eq{
    {\rm Ent}_\mu\left(|\phi|^2\right)\equiv \mu\left(|\phi|^2\log\frac{|\phi|^2}{\mu|\phi|^2}\right) \leq c\|\nabla \phi\|_{L^2(\mu)}^2
    \label{eqn:muLSI}}
    with some constant $c\in(0,\infty)$ independent
    of the function $\phi$.
\end{thm}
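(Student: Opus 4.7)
The plan is to follow the classical Stroock--Zegarlinski programme for Gibbs measures: first establish a uniform single-site log-Sobolev inequality, then combine these via a tensorisation/sweeping-out argument that exploits the finite range of $U$, and finally pass to the infinite-volume limit. For the first step, I consider, for each site $i \in \Z^d$, the one-dimensional conditional measure $\mu_i(d\omega_i \mid \omega_{\neq i})$ on $\R$: by the finite-range assumption its density is proportional to $\exp(-U_i(\omega_i;\omega_{\neq i}))$, where $U_i$ depends on $\omega_i$ and on finitely many neighbouring coordinates only. Under the growth conditions of the theorem -- which I expect to imply uniform convexity of $U_i(\cdot;\omega_{\neq i})$ outside a bounded set, uniformly in the boundary configuration $\omega_{\neq i}$, possibly after a bounded perturbation -- the Bakry--\'Emery criterion combined with Holley--Stroock yields a LSI for $\mu_i(\cdot \mid \omega_{\neq i})$ with a constant $c_0 \in (0,\infty)$ independent of $\omega_{\neq i}$.

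Next I work in a finite box $\Lambda \subset \Z^d$ and write $\mu_\Lambda^{\bar\omega}$ for the finite-volume Gibbs measure with boundary condition $\bar\omega$. Because $U$ has finite range $R$, a proper $k$-colouring $\Lambda = \Lambda_1 \sqcup \dots \sqcup \Lambda_k$, with $k$ bounded in terms of $R$ and $d$, makes the sites within each colour class conditionally independent given the remaining coordinates; on each class the conditional measure is a product of single-site measures and so inherits a LSI with the same constant $c_0$. Combining the colour-wise inequalities through the relative-entropy chain rule (or the Lu--Yau martingale decomposition), and controlling the cross-colour interaction terms via the finite-range decay of $U$, produces a LSI for $\mu_\Lambda^{\bar\omega}$ with constant independent of $\Lambda$ and of $\bar\omega$, provided a Dobrushin--Shlosman-type smallness condition is in force. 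I then exhaust $\Z^d$ by boxes $\Lambda_n \uparrow \Z^d$: for any cylinder function $\phi$, the inequality for $\mu_{\Lambda_n}^{\bar\omega}$ is preserved under averaging $\bar\omega$ against $\mu$ and passing to the limit using the construction of $\mu$ from Section~\ref{2-Setup}; density of cylinder functions in $H^1(\mu)$ then extends the bound to all $\phi \in H^1(\mu)$.

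The technical heart lies in the tensorisation step: the sweeping-out combines many single-site inequalities and the resulting constant can blow up unless the coupling across colour classes is tightly controlled. The growth hypotheses on $U$ must therefore place the system in a Dobrushin--Shlosman uniqueness regime so that the cross-colour interaction is small relative to $c_0$, ensuring that the iterative bound telescopes to a finite constant independent of $\Lambda$. Translating the given analytic growth conditions into the required spatial-mixing smallness -- and verifying the uniformity of $c_0$ in the boundary configuration when $\omega_{\neq i}$ is unbounded in $\R$ -- is where I expect the real work to be.
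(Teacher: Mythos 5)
Your outline follows the same general Stroock--Zegarlinski programme as the paper (uniform local LSI, a colouring that exploits the finite range, iteration, infinite-volume limit), but at the two decisive steps it either adds a hypothesis the theorem does not make or leaves the mechanism unspecified, so as written it does not prove the stated result. The crucial structural difference is the size of the colour classes. You tensorise over \emph{single sites} and then need the cross-colour coupling to be ``small relative to $c_0$'', i.e.\ a Dobrushin--Shlosman smallness condition on the interaction itself; that is an extra assumption, and the theorem is proved without it. The paper instead tensorises over large cubes $X_0=[0,2(L+R)]^d\cap\mathbb{Z}^d$, arranged in $2^d$ colour classes $\Gamma_s$ separated by more than $2R$, and defines $\Pi=\mathbb{E}_{2^d-1}\cdots\mathbb{E}_0$. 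The contraction that makes the iteration summable, $\mu|\nabla(\Pi|f|^2)^{1/2}|^2\le\gamma\,\mu|\nabla f|^2$ with $\gamma=(D|X_0|C_d)^{2^d}e^{-L/2}<1$, is obtained \emph{not} from smallness of the coupling but from geometry: the sweeping-out coefficients decay like $e^{-M|i-j|}$, every path through all $2^d$ colour classes must cross an overlap of width $\ge L/2$, and $|X_0|$ grows only polynomially in $L$, so taking $L$ large forces $\gamma<1$. With single-site classes this lever is unavailable, and your iterative bound will not telescope under the stated hypotheses.

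Two further points are glossed over. First, because the spins are unbounded, the cross-class terms are covariances of the form $\mathbb{E}_\Lambda(F^2;\nabla_jU_\Lambda)$ with $\nabla_jU_\Lambda$ unbounded; ``finite-range decay of $U$'' does not control them. The paper bounds them via a symmetrised covariance representation, the mixing condition equivalent to the uniform LSI (Theorem \ref{thm:Mixing}), a relative entropy inequality, and a Herbst-type exponential moment bound (Lemma \ref{expBound}); some substitute for this machinery is indispensable. Second, your infinite-volume passage (``preserved under averaging $\bar\omega$ against $\mu$ and passing to the limit'') hides the real issue: the entropy decomposes as ${\rm Ent}_\mu(|\phi|^2)=\mu\,{\rm Ent}_{\mathbb{E}_{\Lambda}}(|\phi|^2)+{\rm Ent}_\mu(\mathbb{E}_{\Lambda}|\phi|^2)$, and the residual term does not vanish for fixed $\Lambda$ even for cylinder $\phi$. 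The paper kills it by iterating $\Pi$, using the gradient contraction to show $\Pi^n f\to\mu f$ $\mu$-a.s.\ and then applying monotone convergence to the tail term ${\rm Ent}_\mu(\Pi^N|\phi|^2)$. Without an analogous quantitative decay of the gradient of the conditioned function, the exhaustion $\Lambda_n\uparrow\mathbb{Z}^d$ alone does not close the argument.
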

In particular, it follows from \eqref{eqn:muLSI} that the right hand side of the LSE \eqref{GSLSE} defined by addition of two unbounded operators can be well defined, uniformly in dimension, for an appropriate choice of $\lambda$. Although the general idea of our proof follows those of \cite{GZ}, \cite{SZ1}, \cite{Z1}, \cite{Z2}, \cite{Z3}, \cite{SZ2} we need some care due to  non-compactness of the space $\Omega=\mathbb R^{\mathbb Z^d}$ and the unboundedness of the interaction potential. The full statement, after the constructions in Section \ref{2-Setup}, is given in Theorem \ref{muLSI}.

In Section \ref{Analyticity} we show that our strategy can be used to recover nice, direct results on the analyticity of the theory, providing an alternative point of view to the one presented in \cite{SZ1}.

Following the construction of $\mu$ and proof of LSI, we will investigate various estimates for the solutions of LSE in Section \ref{LSEBound}. We summarise these in the statement below.
\begin{thm}
     Assume that $U$ satisfies the same conditions in Theorem 1.1. Let $\phi$ be the solution to (\ref{GSLSE}) in arbitrary dimension with initial data $f\in H^1(\mu)$ dependent on a finite number of variables, i.e. $f$ is a function of $\{x_{k}\}_{k\in \Lambda}$, where $\Lambda\subset\subset \mathbb{Z}^d$. Then, for $\lambda>0$, it follows that
    \[\|\nabla\phi(\cdot,t)\|_{L^2(\mu)}^2\leq e^{2\lambda t}\|\nabla f\|_{L^2(\mu)}^2 \quad\text{ and }\quad  {\rm Ent}_\mu\left(|\phi(\cdot,t)|^2\right)\leq(\lambda^{-1}+c)\|\nabla f\|_{L^2(\mu)}^2\]
    for all $t>0$, where $c$ is the constant in \eqref{eqn:muLSI}. Futhermore, for any $j\in\mathbb{Z}^d$
    \[\norm{\nabla_{\{x_j\}}\phi(\cdot,t)}_{L^2(\mu)}^2\leq e^{-N_j}\|\nabla f\|_{L^2(\mu)}^2\]
    holds for all $t\leq \varepsilon N_j$, where $ \varepsilon= \varepsilon(\lambda,U)>0$ and $N_j$ is proportional to the distance between $j$ and $\Lambda$.
\end{thm}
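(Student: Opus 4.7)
The plan is to combine the energy/entropy identity for the LSE \eqref{GSLSE} with the log-Sobolev inequality of Theorem~\ref{muLSISimplified} to obtain the first two bounds, and to exploit the finite-range structure of $U$ to obtain a Lieb--Robinson type propagation-of-locality estimate for the third.

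For the gradient bound I would differentiate $\|\nabla\phi\|_{L^2(\mu)}^2$ in time, using the equation in the form $\partial_t\phi = i\mathcal{L}\phi - i\lambda\phi\log(|\phi|^2/\mu|\phi|^2)$. The self-adjointness of $\mathcal{L}$ in $L^2(\mu)$ makes $\int\nabla\bar\phi\cdot\nabla\mathcal{L}\phi\,d\mu = -\|\mathcal{L}\phi\|_{L^2(\mu)}^2$ real, so the linear contribution vanishes after taking the imaginary part. Writing $\phi=\rho e^{i\theta}$ and expanding, the nonlinear contribution simplifies to
\[
\tfrac{d}{dt}\|\nabla\phi\|_{L^2(\mu)}^2 \;=\; -2\lambda\int\nabla\rho^2\cdot\nabla\theta\,d\mu,
\]
and the pointwise AM--GM bound $|\nabla\rho^2\cdot\nabla\theta|\le|\nabla\rho|^2+\rho^2|\nabla\theta|^2 = |\nabla\phi|^2$ together with Gronwall's inequality gives the first estimate.

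For the entropy bound I would upgrade this computation: from the continuity equation $\partial_t|\phi|^2 = -2e^U\nabla\cdot(e^{-U}\rho^2\nabla\theta)$ (obtained from $\partial_t|\phi|^2 = -2\,\mathrm{Im}(\bar\phi\mathcal{L}\phi)$) and mass conservation $\partial_t\mu(|\phi|^2)=0$, one verifies $2\int\nabla\rho^2\cdot\nabla\theta\,d\mu = \partial_t\,\mathrm{Ent}_\mu(|\phi|^2)$, whence the energy conservation law
\[
\|\nabla\phi(\cdot,t)\|_{L^2(\mu)}^2 + \lambda\,\mathrm{Ent}_\mu(|\phi(\cdot,t)|^2) \;=\; \|\nabla f\|_{L^2(\mu)}^2 + \lambda\,\mathrm{Ent}_\mu(|f|^2).
\]
For $\lambda>0$, dropping the nonnegative gradient term on the left and applying the log-Sobolev inequality \eqref{eqn:muLSI} to the initial datum yields the second estimate with constant $\lambda^{-1}+c$.

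For the third estimate, set $\phi_j:=\partial_{x_j}\phi$. The finite-range assumption on $U$ gives $[\partial_{x_j},\mathcal{L}] = -\sum_{k:|k-j|\le R}(\partial_{x_j}\partial_{x_k}U)\,\partial_{x_k}$, coupling $\phi_j$ only to its $R$-neighbors. Repeating the step-one computation on $\|\phi_j\|_{L^2(\mu)}^2$---where the log-nonlinearity again contributes only a diagonal $\lambda\|\phi_j\|^2$ via the same polar identity---produces the chain inequality
\[
\tfrac{d}{dt}\|\phi_j\|_{L^2(\mu)} \;\le\; C\sum_{k:\,|k-j|\le R}\|\phi_k\|_{L^2(\mu)},
\]
with $C$ determined by uniform Hessian bounds on $U$ supplied by the hypotheses of Theorem~\ref{muLSISimplified}. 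Since $f$ depends only on $\{x_k\}_{k\in\Lambda}$, we have $\phi_j(\cdot,0)=0$ for $j\notin\Lambda$, so any contribution to $\phi_j$ must travel along a chain of at least $\lceil N_j/R\rceil$ hops; iterating the inequality and counting paths in $\mathbb{Z}^d$ produces a factor $(C't)^{N_j/R}/(N_j/R)!$, which by Stirling is bounded by $e^{-N_j}$ once $t\le\varepsilon N_j$ with $\varepsilon=\varepsilon(\lambda,U)$ sufficiently small. The principal obstacle is this Lieb--Robinson iteration: one must track combinatorial path counts uniformly in the dimension and verify that the log-nonlinearity really only contributes a diagonal piece so as not to spoil locality---once the polar identity of step one is applied at each site this is transparent, but the iterative bookkeeping together with the uniform control of $\partial_{x_j}\partial_{x_k}U$ coming from the Theorem~\ref{muLSISimplified} hypotheses is the technical heart of the argument.
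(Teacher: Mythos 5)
Your proposal is correct and follows essentially the same route as the paper: conservation of mass and energy via reversibility of $\mathcal L$, the sitewise bound $|{\rm Im}(\phi\nabla_j\phi^*)\,\nabla_j\log|\phi|^2|\le|\nabla_j\phi|^2$ (which your polar decomposition $\phi=\rho e^{i\theta}$ makes explicit) plus Gr\"onwall for the gradient bound, energy conservation plus the log-Sobolev inequality applied to $f$ for the entropy bound, and the commutator $[\nabla_j,\mathcal L]$ with a Lieb--Robinson-style path-counting iteration and Stirling for the finite-speed propagation. The only items the paper adds that you elide are the regularisation $e^{-\varepsilon\mathcal L}\phi$ justifying the formal differentiations and the a priori bound $\sup_{k,s\le t}\mu|\nabla_k\phi(\cdot,s)|^2<\infty$ needed to kill the remainder when the iteration is pushed to infinitely many steps.
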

The last bound tells us that, in finite time, the LSE can only introduce small dependence on variables far away from the dependence of the initial data. This property is analogous to the one discussed in the semigroup case (see e.g. \cite{SZ1}, \cite{Z3}) and is vital for the construction of the solution of the infinite dimensional LSE. For the full statements of these bounds, see theorems \ref{Conservation}, \ref{GradEntbound} and \ref{finiteprop} in Section \ref{LSEBound}.

In Section \ref{Convergence} we study the existence of weak solutions to the infinite-dimensional LSE. We restrict to the one-dimensional lattice and introduce the following result.

\begin{thm}
Let $U$ be a finite-range potential with bounded multi-spin interaction (see Section 
\ref{Convergence}) and consider initial data $f\in H^1(\mu)$ dependent on finitely many variables. Then there exists a weak solution to the infinite-dimensional LSE problem, i.e. there exists $\phi\in H^1(\mu)$ such that
\eqN{
\begin{split}
    &\mu\left(g i\partial_t \phi\right) = \mu\bigg(\nabla g\cdot\nabla\phi+g\lambda \phi\log\frac{|\phi|^2}{\mu|\phi|^2}\bigg)\\
    &\phi\vert_{t=0}=f
\end{split}
}
for all smooth compactly supported function $g:\mathbb R^{\mathbb{Z}}\to\mathbb C$ that depends on finitely many variables.
\end{thm}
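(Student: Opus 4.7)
The strategy is to obtain a weak solution by compactness from finite-dimensional approximations. Let $\Lambda_n\uparrow\Z$ be an exhausting sequence of finite intervals containing $\Lambda$, and let $\phi_n$ solve the finite-dimensional LSE on $\R^{\Lambda_n}$ obtained by freezing the spins outside $\Lambda_n$ to any fixed boundary configuration, with the same initial datum $f$. Existence of each $\phi_n$ is standard for the LSE: one regularises the nonlinearity by $\log(|\psi|^2+\eps)$, solves the regularised Hamiltonian flow by Hilbert space methods, and removes the regularisation using the dimension-free $H^1$-bound, along the lines of the finite-dimensional references cited in the introduction.

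Since the a priori estimates of Section~\ref{LSEBound} are dimension-free, applying them to each $\phi_n$ gives, for every $T>0$,
\eqN{
\sup_{n,\,t\in[0,T]}\|\nabla\phi_n(\cdot,t)\|_{L^2(\mu)}^2 \le e^{2\lambda T}\|\nabla f\|_{L^2(\mu)}^2,\qquad \sup_{n,\,t\in[0,T]}{\rm Ent}_\mu\bigl(|\phi_n|^2\bigr)\le(\lambda^{-1}+c)\|\nabla f\|_{L^2(\mu)}^2,
}
conservation $\|\phi_n(\cdot,t)\|_{L^2(\mu)}=\|f\|_{L^2(\mu)}$, and the finite-propagation bound $\|\nabla_{\{x_j\}}\phi_n\|_{L^2(\mu)}^2\le e^{-N_j}\|\nabla f\|_{L^2(\mu)}^2$ for $t\le\eps N_j$. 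The last estimate encodes that $\phi_n(\cdot,t)$ is, up to exponentially small errors, a cylinder function on a thickening of $\Lambda$ whose radius grows linearly in $t$. Using the equation itself to control $\partial_t\phi_n$ weakly against smooth cylinder test functions, I would then extract a subsequence $\phi_n\rightharpoonup^{\ast}\phi$ in $L^\infty(0,T;H^1(\mu))$. To upgrade this to almost-everywhere convergence (needed for the nonlinearity), I would apply an Aubin--Lions argument after conditioning on the exterior spins of a fixed finite block $\Lambda'\subset\subset\Z$, exploiting the genuine finite-dimensionality and the compact $H^1\hookrightarrow L^2_\loc$ embedding, and then diagonalise over an exhaustion by such $\Lambda'$.

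The decisive step, and the main obstacle, is passing to the limit in the nonlinear term $\mu\bigl(g\lambda\phi_n\log(|\phi_n|^2/\mu|\phi_n|^2)\bigr)$ for a cylinder test function $g$ supported on some finite $\Lambda_g\subset\subset\Z$. The denominator is the conserved mass $\mu|f|^2>0$, so it causes no trouble. The map $s\mapsto s\log|s|^2$ is continuous but lacks uniform growth control near $0$ and near infinity, so weak $H^1$-convergence is not by itself sufficient. The uniform entropy bound combined with Theorem~\ref{muLSISimplified} yields equi-integrability of $|\phi_n|^2\bigl|\log|\phi_n|^2\bigr|$ via a de la Vall\'ee Poussin argument; coupled with the pointwise convergence on each finite block and the finite-propagation bound (used to truncate the contribution of far-away coordinates, whose gradient is exponentially small), Vitali's theorem delivers the limit. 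The linear terms converge by weak convergence of $\phi_n$ and its gradient, while the initial condition is attained weakly in $L^2(\mu)$ through the weak control of $\partial_t\phi_n$, producing the desired weak solution $\phi\in H^1(\mu)$.
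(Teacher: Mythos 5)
Your skeleton (finite-volume approximation, dimension-free a priori bounds from Section \ref{LSEBound}, weak compactness, entropy-based equi-integrability for the logarithmic term) matches the paper's, but the decisive convergence mechanism is different and, as written, has a genuine gap. The paper does \emph{not} obtain almost-everywhere convergence from an Aubin--Lions argument; its central technical step is to prove the convergence of the norms $\mu|\phi_n|^2\to\mu|f|^2$, which together with weak compactness upgrades $\phi_n$ to a \emph{strongly} $L^2(\mu)$-convergent sequence, whence a $\mu$-a.e.\ convergent subsequence. Your substitute --- conditioning on the exterior spins of a fixed finite block and invoking finite-dimensional compactness --- does not obviously work: the approximants $\phi_n$ genuinely depend on all of $\Lambda_n$ (the finite-propagation bound only makes the far gradients small, it does not make $\phi_n$ a cylinder function on a fixed block), so conditioning yields compactness for each fixed exterior configuration with a subsequence depending on that configuration; there are uncountably many configurations, and you cannot diagonalise over them to produce a single $\mu$-a.e.\ convergent subsequence. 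This is precisely the obstruction the norm-convergence argument is designed to bypass.

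Relatedly, your proposal never engages with the mismatch between the finite-volume reference measures and $\mu$, which is where all of the theorem's hypotheses actually enter. The paper works with \eqref{LSEcirc}, whose conservation laws and estimates hold with respect to $\mathbb E_{\Lambda_n}^\circ$, not $\mu$; in particular $\mu|\phi_n|^2$ is \emph{not} conserved, and your claim $\|\phi_n(\cdot,t)\|_{L^2(\mu)}=\|f\|_{L^2(\mu)}$ is unjustified for the approximants. (If instead you insist on the $\mu$-nonlinearity as in \eqref{LSELambda}, existence of each $\phi_n$ is no longer ``standard'', since the marginal of $\mu$ is only defined through the DLR equation.) Transferring the $\mathbb E_{\Lambda_n}^\circ$-estimates to $\mu$ and controlling $\big|\mu|\phi_{n+1}|^2-\mu|\phi_n|^2\big|$ is where the paper uses the uniform density-ratio bound \eqref{densityBound}, the mixing condition \eqref{StrongMixing}, the finite-speed propagation estimate \eqref{eqn:circfiniteprop}, and an interpolation between $U_{\Lambda_n}^\circ$ and $U_{\Lambda_{n+1}}^\circ$ --- and it is exactly here that the restriction to the one-dimensional lattice and to bounded multi-spin interactions is needed (Remark \ref{rmk:1dLattice}). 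Your argument uses neither hypothesis anywhere, which is a reliable sign that the load-bearing step is missing.
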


For the proof we construct a sequence of finite sets $\Lambda_n\subset\subset\mathbb Z$ such that, as $\Lambda_n$ invades $\mathbb Z$, the solution $\phi_{\Lambda_n}$ to the finite-dimensional LSE on $\mathbb R^{\Lambda_n}$ converges to a weak solution $\phi$ of the infinite-dimensional LSE on $\mathbb R^{\mathbb{Z}}$. In higher dimensions the same argument can be used, but more care has to be taken with the growth of the interaction (see Remark \ref{rmk:1dLattice}). The full statement of the result is given by Theorem \ref{existence}.

We conclude in Section \ref{Soliton} by discussing the construction of solitary solutions in finite and infinite dimensions.

\section{Infinite dimensional setup}\label{2-Setup}
\subsection{Configuration space} Consider our physical space being modelled by a lattice $\mathbb Z^d$ ($d\in\mathbb N^+$), endowed with the $l^\infty$ distance $|i-j|\equiv\max_{1\leq n\leq d}|i_n-j_n|$ for all $i,\,j$ in $\mathbb Z^d$. For simplicity, we define
\[{\rm dist}(\Lambda,\Lambda')\equiv\inf_{i\in\Lambda, j\in\Lambda'}|i-j|,\quad{\rm diam}(\Lambda)\equiv\sup_{i,j\in\Lambda}|i-j|\quad\ \forall\,\Lambda,\Lambda'\subset\mathbb Z^d.\]
We assign each $j\in\mathbb Z^d$ a random variable $x_j$ with values in $\mathbb R$. Then the configuration of our system is given by $x=(x_j)_{j\in\mathbb Z^d}$, living in the infinite-dimensional space $\Omega\equiv\mathbb R^{\mathbb Z^d}$. 

Denoting by $\Lambda\subset\subset\mathbb Z^d$ any compact subset $\Lambda$ of $\mathbb Z^d$, we define the projection $\pi_\Lambda:\Omega\to\mathbb R^\Lambda$ by
\[\pi_\Lambda(x)=(x_j)_{j\in\Lambda}\equiv x_\Lambda.\]
The $\sigma$-algebra $\Sigma$ on $\Omega$ is the smallest $\sigma$-algebra that makes all projections measurable. Similarly, any sub $\sigma$-algebra $\Sigma_\Lambda$ on $\mathbb R^\Lambda$ is the smallest $\sigma$-algebra for which $\pi_{\{j\}}$ is measurable for all $j\in\Lambda$. 

We say that a function $f$ on $\Omega$ is localised in $\Lambda$ if it is $\Sigma_\Lambda$-measurable (or equivalently, $f$ only depends on coordinates in $\mathbb R^\Lambda$). Let $\Lambda_f\subset\mathbb Z^d$ be the smallest set in which $f$ is localised, we call $f$ a local or cylinder function if $\Lambda_f$ is compact. Denote by ${\mathfrak{F}(\Omega)}$ the set of all local and bounded functions $f:\Omega\to\mathbb C$.

\subsection{Interactions and Gibbs measures} The potential on $\Omega$ will be given by a family $\{J_X\}_{X\subset\subset\mathbb Z^d}$ of differentiable local functions with finite range $R\in\mathbb N^+$. That is for each $X\subset\subset\mathbb Z^d$, $J_X:\Omega\to\mathbb R$ is localised in $X$, twice differentiable (as a function on $\mathbb R^X$) and $J_X\equiv0$ if ${\rm diam}(X)>R$. The potential energy $U_\Lambda$ in any finite volume $\Lambda\subset\subset\mathbb Z^d$ is well-defined by
\[U_\Lambda(x)=\sum_{\Lambda\cap X\neq\varnothing}J_X(x).\]
\begin{exmp}[Bilinear Potential]
Consider the family of potentials
\[J_X(x)=\left\{\begin{tabular}{ c l }
$C_{ii}x_i^2$&\ \ {\rm if\ }$X=\{i\}$\\
$C_{ij}x_ix_j$&\ \ {\rm if\ }$X=\{i,j\}$\\
0&\ \ {\rm otherwise}  
\end{tabular}\right.\]
where $C_{ij}\in\mathbb R$ uniformly bounded and $C_{ij}=0$ if $|i-j|>R$. The associated $U_\Lambda$, the bilinear interaction in $\Lambda$, follows from the above. 
\end{exmp}
Note that in general $U_\Lambda$ may depend on $x_j$ for some $j\notin\Lambda$. Such dependence outside $\Lambda$ shall be fixed as boundary conditions, i.e. $x_{\Lambda^{\rm c}}=\omega_{\Lambda^{\rm c}}$ for some constant $\omega\in\Omega$. For this reason, we can write
\[U_\Lambda(x)=U_\Lambda^\omega(x)\equiv U_\Lambda(x_\Lambda|x_{\Lambda^{\rm c}}=\omega_{\Lambda^{\rm c}}).\]
In other words, $U_\Lambda^\omega$ represents the energy of configuration $x_\Lambda$ in the finite sub-system $\mathbb R^\Lambda$, 
with all variables outside this system being fixed by $\omega$. The (local) Gibbs measure on $\mathbb R^\Lambda$ with boundary conditions $\omega$ is, by definition, the probability measure $\mathbb E_\Lambda^\omega$ satisfying
\[d\mathbb E_\Lambda^{\omega}=\frac{e^{-U_\Lambda^\omega}}{\int_{\mathbb R^\Lambda}e^{-U_\Lambda^\omega}\,dx_\Lambda}dx_\Lambda \]
where $dx_\Lambda$ is the product Lebesgue measure on $\mathbb R^\Lambda$. For $\mathbb E_\Lambda^\omega$ to be well-defined in all finite volume $\Lambda$, we assume 
\eq{\qquad0<\int_{\mathbb R^\Lambda}e^{-U_\Lambda^\omega}\,dx_\Lambda<\infty\quad\forall\,\Lambda\subset\subset\mathbb Z^d\quad\forall\,\omega\in\Omega \label{Uassum}}
and hereinafter we only consider $U_\Lambda$ that satisfies the condition \eqref{Uassum}. 

The quantity $d\mathbb E_\Lambda^{\omega}$ has all the properties of a regular conditional probability of the system $\mathbb R^\Lambda$ being in configuration $x_\Lambda$, given the boundary $\omega_{\Lambda^{\rm c}}$ outside the system. Frequently it will be convenient to identify $\mathbb E_\Lambda^\omega$ as an operator defined by
\[\mathbb E_\Lambda^\omega f\equiv\int f(x_\Lambda|x_{\Lambda^{\rm c}}=\omega_{\Lambda^{\rm c}})\,d\mathbb E_\Lambda^{\omega} \]
on measurable functions $f:\Omega\to\mathbb C$. To ease the notation, we will write $\mathbb E_\Lambda=\mathbb E_\Lambda^\omega$ when the role of $\omega$ is insignificant, and we call the family $\{\mathbb E_\Lambda\}_{\Lambda\subset\subset\mathbb Z^d}$ a local specification since it specifies the probability density in every local system $\mathbb R^\Lambda$. In the following lemma we recall some useful properties of the local specification.

\begin{lemma}[\cite{BHK}] \label{localELambda}

Let $\{\mathbb E_\Lambda\}_{\Lambda\subset\subset\mathbb Z^d}$ be the local specification defined as above. Then the following properties are satisfied for all boundary conditions: 

\begin{enumerate}[label={\rm(\roman*)}]
    \item { Normalisation:} $\mathbb E_\Lambda(1)=1$.
    \item { Additivity:} Let $\{f_n\geq 0\}$ be a sequence in ${\mathfrak{F}(\Omega)}$, then
    \[\sum_n\mathbb E_\Lambda f_n=\mathbb E_\Lambda\bigg(\sum_nf_n\bigg).\]
    \item { Locality:} $\mathbb E_\Lambda f$ is $\Sigma_{\Lambda^c}$-measurable for $f\in {\mathfrak{F}(\Omega)}$, and if $f$ is $\Sigma_{\Lambda^c}$-measurable then $\mathbb E_\Lambda f=f$.
    \item { Compatibility:} If $\Lambda_1\subset\Lambda_2$ then
    \[\mathbb E_{\Lambda_2}\mathbb E_{\Lambda_1}=\mathbb E_{\Lambda_2}.\]
\end{enumerate}

\end{lemma}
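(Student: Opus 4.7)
The plan is to handle properties (i)--(iii) directly from the definition of $\mathbb E_\Lambda^\omega$, and to focus the real effort on the compatibility statement (iv).

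For (i), the identity $\mathbb E_\Lambda(1)=1$ is exactly the choice of normalising constant $\int e^{-U_\Lambda^\omega}\,dx_\Lambda$ in the definition of $d\mathbb E_\Lambda^\omega$, which is finite and strictly positive by assumption \eqref{Uassum}. For (ii), applying monotone convergence to the partial sums of the non-negative sequence $\{f_n\}$ allows me to exchange the sum with the integral. For the first half of (iii), I note that in $\mathbb E_\Lambda^\omega f$ the variables $x_\Lambda$ are integrated out, so the value of $\mathbb E_\Lambda^\omega f$ depends on $\omega$ only through $\omega_{\Lambda^{\rm c}}$; viewing $\mathbb E_\Lambda f$ as the function $\omega\mapsto\mathbb E_\Lambda^\omega f$, this is precisely $\Sigma_{\Lambda^c}$-measurability. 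For the second half, if $f$ is itself $\Sigma_{\Lambda^c}$-measurable then $f(x_\Lambda|x_{\Lambda^{\rm c}}=\omega_{\Lambda^{\rm c}})=f(\omega)$ is constant in $x_\Lambda$, factors out of the integral, and (i) then yields $\mathbb E_\Lambda f=f$.

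The substantive claim is (iv). The key observation is the finite-range identity
\[U_{\Lambda_2}-U_{\Lambda_1}\;=\;\sum_{\substack{X\cap\Lambda_2\neq\varnothing\\ X\cap\Lambda_1=\varnothing}} J_X,\]
in which every $J_X$ on the right is localised in a set $X\subset\Lambda_1^{\rm c}$. Consequently the difference $W\equiv U_{\Lambda_2}-U_{\Lambda_1}$ is $\Sigma_{\Lambda_1^c}$-measurable, hence inert under integration over $x_{\Lambda_1}$. Writing $x_{\Lambda_2}=(x_{\Lambda_1},x_{\Lambda_2\setminus\Lambda_1})$ and factorising $e^{-U_{\Lambda_2}}=e^{-U_{\Lambda_1}}e^{-W}$, I evaluate $\mathbb E_{\Lambda_2}(\mathbb E_{\Lambda_1} f)$ by carrying out the inner $x_{\Lambda_1}$ integral first via Fubini. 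The factor $\int e^{-U_{\Lambda_1}}\,dx_{\Lambda_1}$ produced by the outer density cancels exactly with the denominator of $\mathbb E_{\Lambda_1}f$, the remaining integrand collapses to $f\,e^{-U_{\Lambda_1}}e^{-W}=f\,e^{-U_{\Lambda_2}}$, and the analogous cancellation in the denominator identifies the resulting quotient with $\mathbb E_{\Lambda_2}f$.

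The only technical check is integrability at each application of Fubini, which is immediate since $\mathbb E_{\Lambda_1}f$ is bounded whenever $f\in\mathfrak F(\Omega)$ is and the partition functions in the denominators are strictly positive and finite by \eqref{Uassum}. The real content is therefore the finite-range cancellation identifying $W$ as independent of $x_{\Lambda_1}$; once that is in hand, the rest is algebraic bookkeeping and I see no further obstacle.
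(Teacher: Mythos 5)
Your proposal is correct. The paper itself gives no proof of this lemma --- it simply cites \cite{BHK} --- so there is nothing internal to compare against; what you have written is the standard argument and fills that gap in a self-contained way. Parts (i)--(iii) are handled exactly as one would expect (normalisation by the choice of partition function, which assumption \eqref{Uassum} makes legitimate; Tonelli for non-negative series; and the observation that integrating out $x_\Lambda$ leaves a function of $\omega_{\Lambda^{\rm c}}$ alone, which matches the paper's own identification of $\Sigma_{\Lambda^{\rm c}}$-measurability with dependence only on those coordinates). The substantive point, compatibility, is also right: since $U_\Lambda=\sum_{X\cap\Lambda\neq\varnothing}J_X$ and $\Lambda_1\subset\Lambda_2$, the difference $W=U_{\Lambda_2}-U_{\Lambda_1}$ is a finite sum (finite range plus $\Lambda_2$ finite) of terms $J_X$ with $X\cap\Lambda_1=\varnothing$, hence independent of $x_{\Lambda_1}$; the factorisation $e^{-U_{\Lambda_2}}=e^{-U_{\Lambda_1}}e^{-W}$ then makes the inner normalising constant $\int e^{-U_{\Lambda_1}}\,dx_{\Lambda_1}$ cancel against the denominator of $\mathbb E_{\Lambda_1}f$ pointwise in the remaining variables $(x_{\Lambda_2\setminus\Lambda_1},\omega_{\Lambda_2^{\rm c}})$, collapsing the iterated integral to $\mathbb E_{\Lambda_2}f$. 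The only cosmetic remark is that the denominator of the outer measure needs no cancellation at all --- it is already $\int e^{-U_{\Lambda_2}}\,dx_{\Lambda_2}$ --- but that does not affect the validity of the argument.
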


Using the local specification, we can now define a global Gibbs measure $\mu$ on the whole space $\Omega$. 
Given any boundary condition $\omega\in\Omega$, the probability $\mu$ on $\Omega$ conditional to $\omega$ should match the probability kernel $\mathbb E_\Lambda^{\omega}$ on $\mathbb R^\Lambda$. Hence, we require
\eq{\mu(f\,|\,\{x_{\Lambda^{\rm c}}=\omega_{\Lambda^{\rm c}}\})=\mathbb E_\Lambda^\omega f \label{conditionalmu}}
for all $\Lambda\subset\subset\mathbb Z^d$, $\omega\in\mathbb R^{\Lambda^{\rm c}}$ and $f\in {\mathfrak{F}(\Omega)}$. The statement \eqref{conditionalmu} is equivalent to
\eq{\mu \mathbb E_\Lambda f=\mu f, \label{DLR}}
known as the DLR equation introduced by Dobrushin, Lanford, and Ruelle \cite{DLR2}.

Consequently, any solution to (\ref{DLR}) 
shall be called a (global) Gibbs measure associated to the local specification $\{\mathbb E_\Lambda\}_{\Lambda\subset\subset\mathbb Z^d}$. Later we formulate conditions that guarantee the existence and uniqueness of the global Gibbs measure $\mu$. For a general discussion of the solutions to the DLR equation see e.g. \cite{BHK}, \cite{DLR2}, \cite{DLR1}, \cite{DLR3}, \cite{FV}.

\subsection{Differential operators}
For simplicity, we shall denote by $\nabla_j\equiv\partial/\partial x_j$ and $\Delta_j\equiv(\partial/\partial x_j)^2$ the derivatives on $\mathbb R^{\{j\}}$. For any differentiable function $f:\Omega\to\mathbb C$ and $\Lambda\subset\subset\mathbb Z^d$, we define the gradient $\nabla_\Lambda f\equiv\left(\nabla_jf \right)$$_{j\in\Lambda}$ and $\nabla f\equiv\left( \nabla_jf\right)$$_{j\in\mathbb Z^d}$, with the Laplacian $\Delta_\Lambda\equiv\sum$$_{j\in\Lambda}\Delta_j$ and $\Delta\equiv\sum$$_{j\in\mathbb Z^d}\Delta_j$. The dot product is naturally defined by
\[\nabla f\cdot\nabla g\equiv\sum_{j\in\mathbb Z^d}(\nabla_jf)(\nabla_jg),\quad\nabla_\Lambda f\cdot\nabla_\Lambda g\equiv\sum_{j\in\Lambda}(\nabla_jf)(\nabla_jg),\]
inducing the $L^2$ length $|\nabla f|^2\equiv\nabla f\cdot\nabla f$ and $|\nabla_\Lambda f|^2\equiv\nabla_\Lambda f\cdot\nabla_\Lambda f$. Consider $f\in {\mathfrak{F}(\Omega)}$ that are twice differentiable, then the linear operator
$$\mathcal L_\Lambda f=\Delta_\Lambda f - \nabla_\Lambda U_\Lambda\cdot\nabla_\Lambda f$$
is well-defined for all $\Lambda\subset\subset\mathbb Z^d$.

\subsection{Properties of Gibbs measures}
We recall some well-known properties and coercive inequalities relating to the operator $\mathcal L_\Lambda=\Delta_\Lambda  - \nabla U_\Lambda\cdot\nabla_\Lambda$ and the associated Gibbs measures $\mathbb E_\Lambda$ and $\mu$ that we constructed in the previous subsections. First, we show that the Gibbs measures are invariant and reversible with respect to $\mathcal L_\Lambda$.

\begin{lemma}
    For any $\Lambda\subset\subset\mathbb Z^d$, $\mu$ and $\mathbb E_\Lambda$ are invariant with respect to $\mathcal L_\Lambda$, i.e. for all $\omega\in\Omega$ it holds
    \[\mu(\mathcal L_\Lambda f)=\mathbb E_\Lambda^\omega(\mathcal L_\Lambda f)=0. \]
\end{lemma}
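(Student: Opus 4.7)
The plan is to reduce both invariance statements to a single integration-by-parts identity on the finite-dimensional space $\mathbb{R}^\Lambda$, and then use the DLR equation from Lemma \ref{localELambda} to transfer the result from $\mathbb{E}_\Lambda^\omega$ to $\mu$.

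First I would verify the elementary algebraic identity
\eqN{
(\mathcal{L}_\Lambda f)\, e^{-U_\Lambda^\omega} = \nabla_\Lambda \cdot \bigl( e^{-U_\Lambda^\omega}\,\nabla_\Lambda f \bigr),
}
which is just the product rule: the Laplacian term gives $(\Delta_\Lambda f)\,e^{-U_\Lambda^\omega}$, and the drift term $-\nabla_\Lambda U_\Lambda \cdot \nabla_\Lambda f$ precisely matches the contribution from differentiating $e^{-U_\Lambda^\omega}$. Writing $\mathbb{E}_\Lambda^\omega(\mathcal{L}_\Lambda f)$ as an integral against the Lebesgue density on $\mathbb{R}^\Lambda$, we then obtain $\mathbb{E}_\Lambda^\omega(\mathcal{L}_\Lambda f) = (Z_\Lambda^\omega)^{-1} \int_{\mathbb{R}^\Lambda} \nabla_\Lambda \cdot (e^{-U_\Lambda^\omega} \nabla_\Lambda f)\,dx_\Lambda$. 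By the divergence theorem on exhausting balls in $\mathbb{R}^\Lambda$, this equals a limit of surface integrals of $e^{-U_\Lambda^\omega} \nabla_\Lambda f$; since $f \in \mathfrak{F}(\Omega)$ is local and bounded with bounded derivatives, and $e^{-U_\Lambda^\omega}$ is integrable on $\mathbb{R}^\Lambda$ by \eqref{Uassum} (with enough decay coming from the growth hypotheses on $U$ alluded to in Theorem \ref{muLSISimplified}), the boundary terms vanish. Hence $\mathbb{E}_\Lambda^\omega(\mathcal{L}_\Lambda f) = 0$ for every $\omega$, which means $\mathbb{E}_\Lambda(\mathcal{L}_\Lambda f) \equiv 0$ as a function on $\Omega$.

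For the global invariance, I would then invoke the DLR equation \eqref{DLR}: applying it to the function $\mathcal{L}_\Lambda f$, we get
\eqN{
\mu(\mathcal{L}_\Lambda f) = \mu\bigl(\mathbb{E}_\Lambda(\mathcal{L}_\Lambda f)\bigr) = \mu(0) = 0,
}
which is exactly the required identity.

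The main delicate point is the justification of the integration by parts at infinity in $\mathbb{R}^\Lambda$. One cannot just invoke compact support of $f$, since $f$ is local but depends nontrivially on the whole fibre $\mathbb{R}^{\Lambda_f}$. Instead, one must use the decay of $e^{-U_\Lambda^\omega}$, guaranteed by the coercivity assumptions on $U$ used throughout the paper, together with bounds on $\nabla_\Lambda f$ that follow from $f$ being a bounded $C^2$ cylinder function (so that $\nabla_\Lambda f$ is itself a bounded local function). Under these conditions $e^{-U_\Lambda^\omega}|\nabla_\Lambda f|$ tends to zero sufficiently fast on the boundary of large balls, so no flux survives. Once this analytic point is settled, the rest of the argument is formal.
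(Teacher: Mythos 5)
Your proposal is correct and follows essentially the same route as the paper: an integration by parts on $\mathbb R^\Lambda$ (the paper does it coordinate-by-coordinate, you package it as a divergence identity, which is the same computation) to get $\mathbb E_\Lambda^\omega(\mathcal L_\Lambda f)=0$, followed by the DLR equation to transfer the identity to $\mu$. Your additional care about the vanishing of the flux terms at infinity is a point the paper leaves implicit, but it does not change the argument.
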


\begin{proof}
Let $j\in\Lambda$ and we have \[\mathbb E_\Lambda(\Delta_jf)=\frac1{Z_\Lambda}\int(\Delta_jf)e^{-U_\Lambda}dx_\Lambda=\frac1{Z_\Lambda}\int\nabla_jf\cdot\nabla_j\big(e^{-U_\Lambda}\big)dx_\Lambda=\mathbb E_\Lambda(\nabla_jU_\Lambda\cdot\nabla_jf)\]
due to integration by parts. Then $\mathbb E_\Lambda(\mathcal L_\Lambda f)=0$ and by the DLR equation one obtains the same property for $\mu$.
\end{proof}

\begin{lemma}\label{Lem 2.3}
    For any $\Lambda\subset\subset\mathbb Z^d$, $\mu$ and $\mathbb E_\Lambda$ are reversible for $\mathcal L_\Lambda$, i.e. for all $\omega\in\Omega$ it holds
    \[\mu(f\mathcal L_\Lambda g)=\mu(g\mathcal L_\Lambda f),\quad \mathbb E_\Lambda^\omega(f\mathcal L_\Lambda g)=\mathbb E_\Lambda^\omega(g\mathcal L_\Lambda f).\]
\end{lemma}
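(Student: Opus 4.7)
The plan is to reduce both reversibility statements to a single symmetric-bilinear-form identity on $\mathbb{R}^\Lambda$ obtained by integration by parts, and then transfer the result from $\mathbb{E}_\Lambda$ to $\mu$ via the DLR equation \eqref{DLR}.

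First I would work at the level of the local specification $\mathbb{E}_\Lambda^\omega$. Fix $\omega\in\Omega$ and treat the coordinates $x_{\Lambda^c}=\omega_{\Lambda^c}$ as frozen parameters, so that integration reduces to the finite-dimensional space $\mathbb{R}^\Lambda$ with density $Z_\Lambda^{-1}e^{-U_\Lambda^\omega}$. The key computation is the pointwise identity
\eq{\nabla_\Lambda\cdot\bigl(\nabla_\Lambda g\,e^{-U_\Lambda^\omega}\bigr)=\bigl(\Delta_\Lambda g-\nabla_\Lambda U_\Lambda^\omega\cdot\nabla_\Lambda g\bigr)e^{-U_\Lambda^\omega}=(\mathcal L_\Lambda g)\,e^{-U_\Lambda^\omega},\nonumber}
which rewrites the weighted Laplace-type operator as a divergence. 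Multiplying by $f$, integrating over $\mathbb{R}^\Lambda$, and applying integration by parts yields
\eq{\mathbb{E}_\Lambda^\omega(f\mathcal L_\Lambda g)=-\frac{1}{Z_\Lambda^\omega}\int_{\mathbb{R}^\Lambda}\nabla_\Lambda f\cdot\nabla_\Lambda g\,e^{-U_\Lambda^\omega}\,dx_\Lambda,\nonumber}
which is manifestly symmetric in $f$ and $g$; swapping the roles gives the reversibility identity for $\mathbb{E}_\Lambda^\omega$. This is the same integration by parts already used in the preceding invariance lemma, and since $f,g\in\mathfrak{F}(\Omega)$ are bounded and local while $U_\Lambda^\omega$ satisfies \eqref{Uassum}, the boundary contributions at infinity in $\mathbb{R}^\Lambda$ vanish.

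Having established $\mathbb{E}_\Lambda^\omega(f\mathcal L_\Lambda g)=\mathbb{E}_\Lambda^\omega(g\mathcal L_\Lambda f)$ for every $\omega$, I would then obtain the statement for $\mu$ by applying the DLR equation \eqref{DLR}: since $\mu\mathbb{E}_\Lambda h=\mu h$ for bounded measurable $h$,
\eq{\mu(f\mathcal L_\Lambda g)=\mu\bigl(\mathbb{E}_\Lambda(f\mathcal L_\Lambda g)\bigr)=\mu\bigl(\mathbb{E}_\Lambda(g\mathcal L_\Lambda f)\bigr)=\mu(g\mathcal L_\Lambda f).\nonumber}

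The only real obstacle is the justification of the boundary-term vanishing in the integration by parts on $\mathbb{R}^\Lambda$, together with the integrability needed to apply Fubini implicit in the DLR step. Both are routine once one restricts to the natural class (say $f,g$ local, bounded, twice differentiable with bounded derivatives, as in the preceding lemma): the finite-range hypothesis on $\{J_X\}$ confines $\nabla_\Lambda U_\Lambda$ to depend on finitely many coordinates, and \eqref{Uassum} provides the needed integrability of $e^{-U_\Lambda^\omega}$, so the product $f\,\nabla_\Lambda g\,e^{-U_\Lambda^\omega}$ decays sufficiently in $|x_\Lambda|\to\infty$ to kill boundary contributions.
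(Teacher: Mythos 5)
Your proposal is correct and follows essentially the same route as the paper: an integration by parts on $\mathbb{R}^\Lambda$ against the density $e^{-U_\Lambda^\omega}$ to exhibit the symmetry in $f$ and $g$, followed by the DLR equation to transfer reversibility from $\mathbb{E}_\Lambda^\omega$ to $\mu$. The only cosmetic difference is that you integrate by parts once to land on the symmetric Dirichlet form $-\mathbb{E}_\Lambda^\omega(\nabla_\Lambda f\cdot\nabla_\Lambda g)$, whereas the paper applies the same integration by parts to the antisymmetrized combination $f\Delta_j g-g\Delta_j f$; the content is identical.
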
 

\begin{proof}
For any $j\in\Lambda$ we have
\begin{equation*}\begin{split}\int(f\Delta_jg-g\Delta_jf)e^{-U_\Lambda}dx_\Lambda&=\int\nabla_j(f\nabla_jg-g\nabla_jf)e^{-U_\Lambda}dx_\Lambda\\&=-\int(f\nabla_jg-g\nabla_jf)\nabla_j\big(e^{-U_\Lambda}\big)dx_\Lambda\end{split}\end{equation*}
which gives $\mathbb E_\Lambda(f\mathcal L_\Lambda g)=\mathbb E_\Lambda(g\mathcal L_\Lambda f)$. The DLR equation leads to the same reversibility of $\mu$.
\end{proof} 
\begin{defn}
    We say $\mathbb E_\Lambda$ satisfies the spectral gap inequality (SGI) with coefficient $c_{SG}\in(0,\infty)$ if
    \[\mathbb E_\Lambda^\omega|f-\mathbb E_\Lambda^\omega f|^2\leq \mathbb E_\Lambda^\omega(f^*(-\mathcal L_\Lambda)f)=c_{SG}\mathbb E_\Lambda^\omega|\nabla_\Lambda f|^2 \]
    and $\mathbb E_\Lambda$ satisfies the logarithmic Sobolev inequality (LSI) with coefficient $c_{LS}\in(0,\infty)$ if
    \[{\rm Ent}_\Lambda(|f|^2)\equiv \mathbb E_\Lambda^\omega\left(|f|^2\log\frac{|f|^2}{\mathbb E_\Lambda^\omega|f|^2}\right)\leq c_{LS} \mathbb E_\Lambda^\omega|\nabla_\Lambda f|^2 \]
    for all boundary conditions $\omega\in\Omega$.
\end{defn} 
%
\begin{remark}  Note that LSI for real functions implies similar inequality for complex functions.
To see that, let $f_R\equiv{\rm Re}(f)$, $f_I\equiv{\rm Im}(f)$ and suppose that $\mathbb E_\Lambda |f|^2=\mathbb E_\Lambda |f_R|^2+\mathbb E_\Lambda |f_I|^2 =1$.  Then by the convexity of the function $x\log x$ we have
\begin{equation*}\begin{split}
{\rm Ent}_\Lambda(|f|^2 ) &= {\rm Ent}_\Lambda\left( \mathbb E_\Lambda |f_R|^2 \frac{|f_R|^2}{\mathbb E_\Lambda |f_R|^2}+  \mathbb E_\Lambda |f_I|^2 \frac{| f_I|^2}{\mathbb E_\Lambda |f_I|^2} \right) \\
&\leq {\rm Ent}_\Lambda\left( \frac{|f_R|^2}{\mathbb E_\Lambda |f_R|^2}\right)\mathbb E_\Lambda |f_R|^2 + {\rm Ent}_\Lambda\left( \frac{|f_I|^2}{\mathbb E_\Lambda |f_I|^2} \right)\mathbb E_\Lambda |f_I|^2.
\end{split}
 \end{equation*}
Hence applying  LSI with the real functions $|f_R|/(\mathbb E_\Lambda |f_R|^2)^\frac12$ and $|f_I|/(\mathbb E_\Lambda |f_I|^2)^\frac12)$ and using inequality $|\nabla |\phi||\leq |\nabla \phi|$ , we get desired LSI for complex functions $f$ with the same coefficient $c$.
\end{remark}

\begin{lemma} \label{LSItoSGI}
    If\ \ $\mathbb E_\Lambda$ satisfies the LSI with coefficient $c_{LS}$, then $\mathbb E_\Lambda$ satisfies the spectral gap inequality with coefficient $c_{SG}$ such that 
    \[c_{SG}\leq\frac12c_{LS}.\]
\end{lemma}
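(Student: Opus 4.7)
The plan is the standard linearisation argument: apply the LSI to a near-constant perturbation $h = 1 + \varepsilon g$ and read off SGI from the coefficient of $\varepsilon^2$.

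First, fix a boundary condition $\omega$ and write $\mathbb E \equiv \mathbb E_\Lambda^\omega$. By the Locality property (Lemma \ref{localELambda}(iii)) the conditional expectation $\mathbb E f$ is $\Sigma_{\Lambda^{\rm c}}$-measurable, so $\nabla_\Lambda \mathbb E f = 0$. Setting $g \equiv f - \mathbb E f$ then gives $\mathbb E g = 0$ and $\nabla_\Lambda g = \nabla_\Lambda f$, and the required SGI reduces to proving
\[\mathbb E |g|^2 \leq \tfrac12 c_{LS}\, \mathbb E |\nabla_\Lambda g|^2\]
for every centred $g$. Writing $g = g_R + i g_I$ and using additivity of variance together with $|\nabla_\Lambda g|^2 = |\nabla_\Lambda g_R|^2 + |\nabla_\Lambda g_I|^2$, it suffices to treat the real-valued case, and by approximation I may assume $g$ is bounded.

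Next, apply the LSI to $h = 1 + \varepsilon g$ for small real $\varepsilon > 0$. Then $|h|^2 = 1 + v$ with $v = 2\varepsilon g + \varepsilon^2 g^2$, $\mathbb E |h|^2 = 1 + \varepsilon^2 \mathbb E g^2$, and $|\nabla_\Lambda h|^2 = \varepsilon^2 |\nabla_\Lambda g|^2$. Using the Taylor expansion $(1+v)\log(1+v) = v + \tfrac12 v^2 - \tfrac16 v^3 + O(v^4)$, collecting powers of $\varepsilon$, and noting that all linear-in-$g$ contributions vanish under $\mathbb E$ because $\mathbb E g = 0$, one obtains
\[\mathbb E\bigl(|h|^2 \log |h|^2\bigr) = 3\varepsilon^2\, \mathbb E g^2 + O(\varepsilon^3), \qquad \mathbb E|h|^2 \,\log \mathbb E|h|^2 = \varepsilon^2\, \mathbb E g^2 + O(\varepsilon^4),\]
so that ${\rm Ent}_\Lambda(|h|^2) = 2\varepsilon^2\, \mathbb E g^2 + O(\varepsilon^3)$, while the Dirichlet side of LSI equals $c_{LS}\,\varepsilon^2\, \mathbb E|\nabla_\Lambda g|^2$.

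Dividing the LSI by $\varepsilon^2$ and letting $\varepsilon \to 0$ yields $2\,\mathbb E g^2 \leq c_{LS}\, \mathbb E|\nabla_\Lambda g|^2$, which is exactly the SGI with constant $\tfrac12 c_{LS}$ and is uniform in $\omega$. There is no genuine obstacle here; the only care required is bookkeeping the Taylor expansion to second order and verifying that the $O(\varepsilon^3)$ remainder is controlled uniformly enough to vanish in the limit, which is immediate once $g$ is bounded.
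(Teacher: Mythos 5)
Your proposal is correct and follows essentially the same route as the paper: the Rothaus linearisation of the LSI at $h=1+\varepsilon g$ with $\mathbb E_\Lambda g=0$, extracting the coefficient of $\varepsilon^2$ from the Taylor expansion of the entropy to obtain $2\,\mathbb E_\Lambda g^2\leq c_{LS}\,\mathbb E_\Lambda|\nabla_\Lambda g|^2$, followed by the reduction from complex to real functions. The expansion bookkeeping ($3\varepsilon^2\mathbb E g^2$ minus $\varepsilon^2\mathbb E g^2$) matches the paper's stated $2\varepsilon^2\mathbb E_\Lambda|f|^2+\mathcal O(\varepsilon^3)$.
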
 
\begin{proof} 
Let $f$ be a real function with zero expectation $\mathbb E_\Lambda f=0$. Let $\varepsilon>0$ be small enough, then by the LSI for $\mathbb E_\Lambda$, we have
\[\mathbb E_\Lambda\left(|1+\varepsilon f|^2\log\frac{|1+\varepsilon f|^2}{\mathbb E_\Lambda|1+\varepsilon f|^2}\right)\leq c\,\varepsilon^2\mathbb E_\Lambda|\nabla_\Lambda f|^2.\]
Since $\mathbb E_\Lambda f=0$, the Taylor expansion of the left hand side is $2\varepsilon^2\mathbb E_\Lambda|f|^2+\mathcal O(\varepsilon^3)$, which gives
\[\mathbb E_\Lambda|f|^2\leq \frac c2\mathbb E_\Lambda|\nabla_\Lambda f|^2.\]
Replace $f$ with $f-\mathbb E_\Lambda f$ and we obtain Lemma \ref{LSItoSGI} for real $f$. Finally, note that the spectral gap inequality for real functions $f$ implies corresponding  property for complex functions.
\end{proof} 
\begin{remark} The above property and proof is due to Rothaus  \cite{Rothaus1}. 
Another idea (due to S.G.Bokov and F.G\"otze), is to notice that right hand side of LSI is invariant with respect to a shift by a constant  and that one has
\[\frac12 \mathbb E_\Lambda|f-\mathbb E_\Lambda f|^2\leq \sup_{a\in\mathbb{R}} 
{\mathbb E_\Lambda\left(|f|^2\log\frac{|f|^2}{\mathbb E_\Lambda|f|^2}\right) }. \]
For an interesting discussion when $c_{SG} =\frac12c_{LS}$ is or is not true see \cite{Rothaus2}.
\end{remark}
Finally, we state an important, well-known, property of log-Sobolev inequalities.
\begin{lemma} \label{LSISGItensorise}
    If $\mu_1,\,\mu_2$ are two probability measures on two non-interacting systems $\Omega_1,\,\Omega_2$ satisfying LSI with coefficient $c_{LS}$ (or SGI with coefficient $c_{SG})$, then their tensorisation $\mu_1\otimes\mu_2$ on the product space $\Omega_1\times\Omega_2$ also satisfies a LSI with coefficient $c_{LS}$ (or an SGI with coefficient $c_{SG}$).  
\end{lemma}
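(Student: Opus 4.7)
The plan is the classical tensorisation argument, reducing the problem to exact decomposition identities for entropy and variance across the product, followed by direct application of the hypothesised inequalities in each factor. I would treat the LSI case first, since the SGI case is analogous and slightly simpler.

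Writing $\mu = \mu_1 \otimes \mu_2$ and $h = |f|^2$, the first step is to establish the exact entropy decomposition
\[
    {\rm Ent}_\mu(h) = \mu_1\bigl({\rm Ent}_{\mu_2}(h)\bigr) + {\rm Ent}_{\mu_1}(\mu_2 h),
\]
where the inner entropies are taken with the complementary coordinate held fixed; this is a direct computation using $\mu = \mu_1\mu_2$ and the definition of ${\rm Ent}$. Next, convexity of $t \mapsto t \log t$ (equivalently, Jensen's inequality applied to the convex functional $g \mapsto {\rm Ent}_{\mu_1}(g)$) yields
\[
    {\rm Ent}_{\mu_1}(\mu_2 h) \leq \mu_2\bigl({\rm Ent}_{\mu_1}(h)\bigr),
\]
so that ${\rm Ent}_\mu(h) \leq \mu_1({\rm Ent}_{\mu_2}(h)) + \mu_2({\rm Ent}_{\mu_1}(h))$. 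For each fixed slice, the assumed LSI for $\mu_j$ gives ${\rm Ent}_{\mu_j}(|f|^2) \leq c_{LS}\,\mu_j|\nabla_j f|^2$; integrating in the remaining coordinate and summing produces
\[
    {\rm Ent}_\mu(|f|^2) \leq c_{LS}\,\mu\bigl(|\nabla_1 f|^2 + |\nabla_2 f|^2\bigr) = c_{LS}\,\mu|\nabla f|^2,
\]
with the same constant $c_{LS}$.

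For the spectral gap inequality, the same scheme applies to the exact variance decomposition
\[
    {\rm Var}_\mu(f) = \mu_2\bigl({\rm Var}_{\mu_1}(f)\bigr) + {\rm Var}_{\mu_2}(\mu_1 f).
\]
Applying the SGI of $\mu_1$ to the first term and the SGI of $\mu_2$ to $\mu_1 f$ in the second, and using $\nabla_2(\mu_1 f) = \mu_1 \nabla_2 f$ together with $|\mu_1\nabla_2 f|^2 \leq \mu_1|\nabla_2 f|^2$ by Cauchy--Schwarz, yields ${\rm Var}_\mu(f) \leq c_{SG}\,\mu|\nabla f|^2$. The proof is essentially bookkeeping: there is no serious obstacle, since the decompositions are pointwise identities and the only inequality step is a standard Jensen or Cauchy--Schwarz bound. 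Preservation of the coefficient under tensorisation is automatic because the hypotheses enter linearly on the right-hand side of both decompositions.
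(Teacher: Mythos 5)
Your argument is correct: both the entropy decomposition ${\rm Ent}_\mu(h) = \mu_1({\rm Ent}_{\mu_2}(h)) + {\rm Ent}_{\mu_1}(\mu_2 h)$ and the variance decomposition are exact identities, and the Jensen/Cauchy--Schwarz steps are valid, so the coefficient is preserved. The paper does not write out a proof but defers to Theorems 2.5 and 4.4 of \cite{GZ}, which establish the result by exactly this classical tensorisation argument, so your proposal matches the intended proof.
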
 
The proof can be found in Theorem 2.5 and Theorem 4.4 in \cite{GZ}.

\section{Logarithmic Sobolev Inequalities for Global Gibbs Measure \label{LSI}}
Hereafter, we make the assumption that the global Gibbs measure, $\mu$ exists, which we justify with Remark \ref{muexist} below. Following the construction in Section \ref{2-Setup}, we present sufficient conditions on the potential $U$ such that the associated  measure $\mu$ satisfies a logarithmic Sobolev inequality. We now restate Theorem \ref{muLSISimplified} in its full form.

\begin{thm}\label{muLSI}
Suppose $U_\Lambda$ satisfies the condition \eqref{Uassum} introduced in Section \ref{2-Setup} and $\|\nabla_i\nabla_jU_\Lambda\|_\infty\leq A$ for all $\Lambda\subset\subset\mathbb Z^d$, $i,j\in\mathbb Z^d$ with $i\neq j$ where $A>0$ is independent of $i,j,\Lambda$. If the finite-volume log-Sobolev inequality
\eq{\mathbb E_\Lambda\left(|f|^2\log\frac{|f|^2}{\mathbb E_\Lambda|f|^2}\right)\leq\tilde c\,\mathbb E_\Lambda|\nabla_\Lambda f|^2 \label{LambdaLSI}}
holds for all $\Lambda\subset\subset\mathbb Z^d$ and some $\tilde c>0$ independent of $\Lambda$, then the global Gibbs measure $\mu$ associated to $\{\mathbb E_\Lambda\}$ is unique and satisfies the log-Sobolev inequality
\eq{\mu\left(|f|^2\log\frac{|f|^2}{\mu|f|^2}\right)\leq c\mu|\nabla f|^2 \label{LSImu}}
for all $f\in H^1(\mu)$ and some constant $c\in(0,\infty)$ independent of $f$.
\end{thm}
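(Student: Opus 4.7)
My plan is to follow the two-sublattice (Zegarlinski/Stroock--Zegarlinski) strategy, using the finite range of $U$ to turn the assumed finite-volume LSI into LSI for \emph{infinite-volume} conditional expectations via tensorisation, and then sweep out the global entropy along an alternating product.

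First I would fix an integer $L\geq R$ and partition $\mathbb{Z}^d$ into two sublattices $\Lambda_0,\Lambda_1$, each a disjoint union of cubes of side $L$ such that any two cubes within the same sublattice are separated by distance $>R$. Because $J_X\equiv0$ whenever $\mathrm{diam}(X)>R$, the conditional density $d\mathbb{E}_{\Lambda_i}^\omega$ factorises as a tensor product over these cubes, with each factor a local Gibbs measure on a cube (conditioned on $\omega$). By the assumed uniform finite-volume LSI \eqref{LambdaLSI} each cube-factor satisfies LSI with constant $\tilde c$, so Lemma \ref{LSISGItensorise} yields that $\mathbb{E}_{\Lambda_0}^\omega$ and $\mathbb{E}_{\Lambda_1}^\omega$ satisfy an LSI with the same constant $\tilde c$, uniformly in $\omega$. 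This is the only place where the assumption \eqref{LambdaLSI} is used.

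Next I would iterate the entropy-decomposition identity, valid under DLR \eqref{DLR}:
\[
\mathrm{Ent}_\mu(F)=\mu\bigl(\mathrm{Ent}_{\mathbb{E}_{\Lambda_0}}(F)\bigr)+\mathrm{Ent}_\mu(\mathbb{E}_{\Lambda_0}F),
\]
applied alternately to $\Lambda_0$ and $\Lambda_1$ starting from $F=|f|^2$. Setting $F_{2k}=(\mathbb{E}_{\Lambda_1}\mathbb{E}_{\Lambda_0})^kF$ and $F_{2k+1}=\mathbb{E}_{\Lambda_0}F_{2k}$, this yields
\[
\mathrm{Ent}_\mu(|f|^2)=\sum_{n=0}^{\infty}\mu\bigl(\mathrm{Ent}_{\mathbb{E}_{\Lambda_{n\bmod 2}}}(F_n)\bigr),
\]
provided the remainder $\mathrm{Ent}_\mu(F_N)\to0$ as $N\to\infty$ (this follows from the contraction in the next step). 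Applying the LSI for $\mathbb{E}_{\Lambda_i}$ to each summand with test function $F_n^{1/2}$ bounds the $n$-th term by $4\tilde c\,\mu|\nabla_{\Lambda_{n\bmod 2}}F_n^{1/2}|^2$.

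The remaining, and main, task is a \emph{sweeping-out} (contraction) estimate of the form
\[
\mu\bigl|\nabla (\mathbb{E}_{\Lambda_1}\mathbb{E}_{\Lambda_0}g)\bigr|^2\leq \theta\,\mu|\nabla g|^2,\qquad \theta=\theta(A,\tilde c,R,d)<1,
\]
applied inductively to $g=f$ and to the positive functions used above (after reducing to $g\geq0$ and controlling $\nabla F_n^{1/2}$ by $\nabla F_n$ via Cauchy--Schwarz together with a lower bound on $F_n$, or more robustly by working directly with the real and imaginary parts of $f$ as in the remark after Lemma \ref{LSItoSGI}). To prove the contraction one differentiates under the conditional expectation: for $j\notin\Lambda_i$,
\[
\nabla_j \mathbb{E}_{\Lambda_i}^\omega g=\mathbb{E}_{\Lambda_i}^\omega(\nabla_j g)-\mathrm{Cov}_{\mathbb{E}_{\Lambda_i}^\omega}\!\bigl(g,\nabla_j U_{\Lambda_i}\bigr),
\]
and uses the hypothesis $\|\nabla_i\nabla_j U_\Lambda\|_\infty\leq A$ together with the finite range (only $j$ within distance $R$ of $\Lambda_i$ contribute) and the spectral-gap inequality extracted from the finite-volume LSI via Lemma \ref{LSItoSGI} to bound the covariance by $\sum_{k\in\Lambda_i,\,|k-j|\leq R}\mathbb{E}_{\Lambda_i}|\nabla_k g|^2$. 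Summing over $j$ and choosing $L$ large enough so that the geometric cost of $A$-type couplings is beaten by the sublattice separation produces $\theta<1$. This is the genuinely technical step and is where the non-compactness of $\Omega$ and unboundedness of $U$ force careful use of $L^2(\mu)$ estimates rather than supremum bounds; once $\theta<1$ is obtained, the series in the previous paragraph is dominated by a geometric one, giving \eqref{LSImu} with $c=4\tilde c/(1-\theta)$. Uniqueness of $\mu$ is a consequence of the same contraction: any two Gibbs measures agree on cylinder functions because $(\mathbb{E}_{\Lambda_1}\mathbb{E}_{\Lambda_0})^n g$ converges (in $L^2$ of either measure) to a constant equal to the common expectation.
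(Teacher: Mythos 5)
Your overall architecture is the paper's own: tensorise the assumed finite-volume LSI over well-separated cubes, telescope the entropy along an alternating product of conditional expectations, and close the argument with a gradient contraction ("sweeping out"). However, there is a genuine gap precisely at the step you yourself identify as the main technical one. You claim that the covariance term in
\[
\nabla_j \mathbb{E}_{\Lambda_i}^\omega g=\mathbb{E}_{\Lambda_i}^\omega(\nabla_j g)-\mathrm{Cov}_{\mathbb{E}_{\Lambda_i}^\omega}\bigl(g,\nabla_j U_{\Lambda_i}\bigr)
\]
can be bounded by $\sum_{k\in\Lambda_i,\,|k-j|\leq R}\mathbb{E}_{\Lambda_i}|\nabla_k g|^2$ using only $\|\nabla_k\nabla_j U\|_\infty\leq A$ and the spectral gap. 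Cauchy--Schwarz plus the spectral gap gives $|\mathrm{Cov}(g,\nabla_j U)|^2\leq \mathrm{Var}(g)\,\mathrm{Var}(\nabla_j U)\leq c_{SG}^2 A^2(2R+1)^d\,\mathbb{E}_{\Lambda_i}|\nabla_{X}g|^2$, where $X$ is the whole cube near $j$ --- the variance of $g$ does \emph{not} localise to the sites within distance $R$ of $j$. Summing over the $O(L^{d-1})$ boundary sites $j$ of each cube then produces a prefactor \emph{growing} like $L^{d-1}$ in front of $\mu|\nabla g|^2$, so no choice of $L$ yields $\theta<1$; the two effects you hope to play off against each other both get worse as $L$ increases. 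What is actually needed is an exponential spatial decay $\alpha_{ji}\lesssim |X_0|e^{-M|j-i|}$ of the sweeping-out coefficients, so that after composing the sublattice expectations every transfer path must contain a jump of length $\gtrsim L$, contributing $e^{-ML}$ and beating the polynomial volume factors. The paper obtains this decay not from the spectral gap but from the exponential decay of correlations (the mixing condition of Theorem \ref{thm:Mixing}, which under the stated hypotheses is equivalent to the uniform LSI), combined with a duplicated-variable representation of the covariance, a relative entropy inequality, and an exponential-moment bound (Lemma \ref{expBound}) to control the unbounded factor $x_i-\tilde x_i$. This machinery is not optional decoration; without it the contraction fails.

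Two smaller points. First, in $d\geq 2$ a two-colour decomposition into cubes separated by distance $>R$ in the $l^\infty$ metric does not exist (diagonally adjacent cubes of the same colour touch), so you need $2^d$ sublattices as in the paper's construction of $\Gamma_0,\dots,\Gamma_{2^d-1}$; the telescoping argument survives with $2^d$ factors but the bookkeeping of where the long jump occurs must be redone. Second, your treatment of $\nabla F_n^{1/2}$ "via a lower bound on $F_n$" is not available in general; the standard route is to bound $\bigl|\nabla_j(\mathbb{E}F^2)^{1/2}\bigr|^2$ directly by $\frac{|\mathbb{E}(\nabla_j F^2)|^2}{4\mathbb{E}F^2}+\frac{|\mathbb{E}(F^2;-\nabla_j U)|^2}{4\mathbb{E}F^2}$ and apply Cauchy--Schwarz to the first term, which is how the paper proceeds.
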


\begin{remark} \label{rmk:BE}
A sufficient condition for the volume-uniform LSI \eqref{LambdaLSI} is that
\eq{\inf_{x\in\Omega}\Delta_iU_\Lambda(x)\geq B \label{StrongConvexity}}
for all $\Lambda\subset\subset\mathbb Z^d$, $i\in\Lambda$ and some $B>0$ independent of $i$ and $\Lambda$. This is a consequence of the Bakry-Emery criterion \cite{BE}, which states that if 
\eq{\mathbf{\Gamma}_2(f,f)\geq b\mathbf{\Gamma}_1(f,f) \label{BECondition}}
for some $b>0$ independent of $f$, where
\begin{align*}
    &\mathbf{\Gamma}_1(f,g)\equiv \mathcal L_\Lambda(fg)-f\mathcal L_\Lambda g-g\mathcal L_\Lambda f,\\
    &\mathbf{\Gamma}_2(f,g)\equiv \mathcal L_\Lambda(\mathbf{\Gamma}_1(f,g))-\mathbf{\Gamma}_1(f,\mathcal L_\Lambda g)-\mathbf{\Gamma}_1(g,\mathcal L_\Lambda f),
\end{align*}
then \eqref{LambdaLSI} holds with $\tilde c=2/b$. By direct calculation, \eqref{BECondition} is satisfied due to \eqref{StrongConvexity}. In other words, one has a volume-uniform LSI if the potential is strongly convex at all sites with the same strong convexity constant. 
\end{remark}
For application of the above result, we return to the examples of the bilinear potential and its perturbation by bounded local interactions.
\begin{exmp} \label{Exmp:bilinear}
Consider the bilinear potential \[U_\Lambda(x)=\sum_{\{i,j\}\cap\Lambda\neq\varnothing}C_{ij}x_ix_j\] 
where $C^-\leq C_{ii}\leq C^+$ and $|C_{ij}|\leq C^+$ for some $0<C^-\leq C^+$, and $C_{ij}=0$ if $|i-j|>R$. Then $U_\Lambda$ satisfies all conditions in Theorem \ref{muLSI} by Remark \ref{rmk:BE}.

\end{exmp}

\begin{exmp} \label{Exmp:bilinearPerturbed}
Consider the same bilinear potential but with bounded perturbations \[U_\Lambda(x)=\sum_{\{i,j\}\cap\Lambda\neq\varnothing}C_{ij}x_ix_j+\varepsilon\sum_{\substack{X\subset\subset\mathbb Z^d\\X\cap\Lambda\neq\varnothing}}W_X(x_X),\] 
where $\varepsilon\in\mathbb R$, $W_X\in C^2(\mathbb R^X)$ has uniformly in $X$ bounded derivatives of order $n=0,1,2$ and $W_X\equiv 0$ if diam$(X)>R$. Suppose $C_{ij}$ satisfies the same conditions as above, then for sufficiently small $|\varepsilon|$ all conditions in Theorem \ref{muLSI} are satisfied.

\end{exmp}
As a final example we introduce the restriction to the interactions with no boundary conditions. Later on we will use this extensively to study existence results for the LSE.
\begin{exmp}
\label{mucirc}
Theorem \ref{muLSI} holds true when the boundary conditions are removed. That is, suppose $U$ satisfies the conditions of Theorem \ref{muLSI}, then we denote by 
\eqN{U_\Lambda^\circ(x)\equiv\sum_{X\subseteq\Lambda}J_X(x)} 
the restriction to $J_X$ strictly localised inside $\Lambda$. Then, we denote by $\mathbb E_\Lambda^\circ$ the local Gibbs measure corresponding to $U_\Lambda^\circ$, i.e. $d\mathbb E_\Lambda^\circ\equiv(e^{-U_\Lambda^\circ}dx_\Lambda)(\int\!e^{-U_\Lambda^\circ}dx_\Lambda)^{-1}$, and by $\mu^\circ$ the global Gibbs measure associated to $\{\mathbb E_\Lambda^\circ\}$ through the DLR equation. Then Theorem \ref{muLSI} holds with $(U_\Lambda,\mathbb E_\Lambda,\mu)$ replaced by $(U_\Lambda^\circ,\mathbb E_\Lambda^\circ,\mu^\circ)$.
\end{exmp}

\begin{remark}[Existence]
We remark that by the compactness argument in \cite{BHK}, the existence of $\mu$ in our setting can always be obtained by restricting the configuration space $\Omega$, to the space of slowly increasing sequences (tempered sequences) $\mathcal S_\Omega\equiv\big\{x\in\Omega:\exists N>0\text{ such that }\sup_j|x_j|/|j|^N<\infty\big\}$, provided the multi-spin interaction satisfies suitable growth conditions. The restriction to the subspace $\mathcal S_\Omega$ can be justified by the fact that $\mathcal S_\Omega$ is dense in $\Omega$, i.e. $\mu(\mathcal S_\Omega)=1$ (see \cite[Proposition~A.1]{BHK}), and that one can obtain compactness argument on such space.
\label{muexist}
\end{remark}

\subsection{Strategy to prove Theorem \ref{muLSI}}
The proof of Theorem \ref{muLSI} follows the ideas of \cite{GZ}, \cite{SZ1}, \cite{Z1}, \cite{Z2}, \cite{Z3}, \cite{SZ2}  with some modifications for the non-compactness of the space $\Omega\equiv\mathbb R^{\mathbb Z^d}$ and the unboundedness of the potential $U$. The strategy is to construct a probability measure $\Pi$ on $\Omega$ that satisfies a LSI and where $\Pi^n\to\mu$ as $n\to\infty$. Then one can obtain the $\mu$-LSI by using the $\Pi$-LSI repeatedly on a telescoping series of entropy terms associated to the sequence $\{\Pi^n\}_{n\in\mathbb N}$.

Here we present a construction of such $\Pi$, the idea is to define $\Pi$ as an infinite tensorisation of local Gibbs measures on equally sized cubes that partition the whole lattice. To this end, for some parameter $L\in\mathbb N$ and the interaction range $R\in\mathbb N$, we define the base cube $X_0\equiv [0,2(L+R)]^d\cap\mathbb Z^d$ and denote by $X_k\equiv k+X_0$ the translation of $X_0$ by the vector $k\in\mathbb Z^d$. For any $s\in\mathbb N$ let $v_s=(v_s^{(1)},...,v_s^{(d)})\in\{0,1\}^d$ be the binary representation of $s$, i.e. $s=\sum_{n=1}^dv_s^{(n)}2^{n-1}$. We define the following collection of cubes
\eq{\Gamma_s=\bigcup_{k\in T_s}X_k\label{Gammas}}
for $s=0,1,...,2^d-1$ and the translation set $T_s\equiv\big\{k\in\mathbb Z^d:k\in2(L+2R)\mathbb Z^d+(L+2R)v_s\big\}$.
By such construction, each $\Gamma_s$ contains disjoint cubes of shape $X_0$, equally distributed in $\mathbb Z^d$, and separated by a distance greater than $2R$. In addition, as $s$ goes over $\{0,1,...,2^d-1\}$ the vector $v_s$ will cover all directions in $\{0,1\}^d$, hence the union $\bigcup_{s=0,1,...,2^d-1}\Gamma_s=\mathbb Z^d$ covers the whole lattice. Let $\mathbb E_{X_k}$ be the local Gibbs measure on the cube $X_k$ and we set $\mathbb E_s\equiv\bigotimes_{X_k\subset\Gamma_s}\mathbb E_{X_k}$. Then we define the measure $\Pi$ by
\[\Pi^\omega\equiv\mathbb E_{2^d-1}^\omega \mathbb E_{2^d-2}\cdots\mathbb E_1\mathbb E_0 \]  with boundary conditions $x_{(\Gamma_{2^d-1})^{\mathsf c}}=\omega_{(\Gamma_{2^d-1})^{\mathsf c}}$. The measure $\Pi$ satisfies the following four properties.

\begin{propo} \label{TechnicalProp}
Let $\Pi$ be as constructed above, then the following hold. 
\begin{enumerate}[label={\rm(\roman*)}]
    \item The DLR equation holds:
    \[\mu\Pi f=\mu f.\]
    
    \item There exists $\bar c>0$ such that 
    \[\mu\Big(\Pi\big(|f|^2\log|f|^2\big)-\Pi|f|^2\log\Pi|f|^2\Big)\leq\bar c\,\mu|\nabla f|^2.\]
    
    \item There exists $\gamma\in(0,1)$ such that
    \[\mu\big|\nabla(\Pi |f|^2)^\frac12\big|^2\leq\gamma\mu|\nabla f|^2.\]
    
    \item The sequence $\Pi^n$ converges to $\mu$ almost surely, i.e.
    \[\qquad\qquad\qquad\lim_{n\to\infty}\Pi^n(f)=\mu(f)\qquad\ \ \mu\text{\,-\,a.s},\]
    and the limit defines a unique global Gibbs measure $\mu$ associated to $\{\mathbb E_\Lambda\}$.
\end{enumerate}
\end{propo}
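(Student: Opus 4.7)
All four claims are driven by two technical ingredients: tensorisation of LSI/SGI (Lemma \ref{LSISGItensorise}), which upgrades \eqref{LambdaLSI} to a volume-uniform LSI for the tensor product $\mathbb E_s$, and a \emph{sweeping-out} identity
\[
\nabla_j\mathbb E_{X_k}G \;=\; \mathbb E_{X_k}\nabla_j G \;-\; \mathrm{Cov}_{\mathbb E_{X_k}}(\nabla_j U_{X_k},G),\qquad j\notin X_k,
\]
whose cross term is controlled by the SGI inside $X_k$ (from Lemma \ref{LSItoSGI}) together with the hypothesis $\|\nabla_i\nabla_j U_\Lambda\|_\infty\leq A$. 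For claim (i), the cubes making up $\Gamma_s$ are pairwise disjoint and separated by distance strictly larger than $R$, so interactions of range $R$ do not couple them and $\mathbb E_s$ factors as a genuine tensor product of single-cube Gibbs measures. Applying $\mu\mathbb E_{X_k}=\mu$ on a finite exhaustion of $\Gamma_s$ and passing to the limit by dominated convergence on the tempered support of Remark \ref{muexist} gives $\mu\mathbb E_s=\mu$; composing over $s$ yields the DLR property $\mu\Pi f=\mu f$.

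For (ii) I expand the entropy under the composition $\Pi=\mathbb E_{2^d-1}\cdots\mathbb E_0$ by the chain rule for entropy under iterated conditional expectations:
\[
\Pi(|f|^2\log|f|^2)-\Pi|f|^2\log\Pi|f|^2 \;=\; \sum_{s=0}^{2^d-1}\mathbb E_{2^d-1}\cdots\mathbb E_{s+1}\bigl({\rm Ent}_s(h_s^2)\bigr),
\]
where $h_s^2\equiv\mathbb E_{s-1}\cdots\mathbb E_0|f|^2$ (so $h_0=|f|$). The $\mathbb E_s$-LSI bounds each ${\rm Ent}_s(h_s^2)$ by $\tilde c\,\mathbb E_s|\nabla_{\Gamma_s}h_s|^2$; $\mu$-integrating and collapsing the outer conditional expectations through $\mu\Pi=\mu$ reduces the claim to a bound $\mu|\nabla_{\Gamma_s}h_s|^2\leq C\mu|\nabla f|^2$. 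The latter is obtained by pushing the gradient $\nabla_{\Gamma_s}$ through the layers $\mathbb E_{s-1},\ldots,\mathbb E_0$ via the sweeping-out identity, each step incurring only a constant depending on $A$, $R$, and the maximal number of neighbours of a site within range $R$.

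For (iii), the outermost $\mathbb E_{2^d-1}$ renders $\Pi|f|^2$ independent of $x_j$ for every $j\in\Gamma_{2^d-1}$, so only sites in $\Gamma_{2^d-1}^{\rm c}$ contribute to the gradient. For such $j$, only sites within range $R$ of the boundary of a cube $X_k\subset\Gamma_{2^d-1}$ can feed back through the covariance in the sweeping identity; iterating through the tower $\mathbb E_{2^d-1},\ldots,\mathbb E_0$, each layer yields a factor involving the SGI contraction inside a cube multiplied by a combinatorial count of $R$-boundary sites of $X_0$. Choosing $L\gg R$ makes the overall product strictly smaller than $1$, producing $\gamma\in(0,1)$. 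For (iv), iterating (iii) gives $\mu|\nabla(\Pi^n|f|^2)^{1/2}|^2\leq\gamma^n\mu|\nabla f|^2$; combined with a Poincar\'e-type bound for $\mu$ -- itself a consequence of the $\mu$-LSI that (i)--(iii) feed into via the telescoping entropy argument of Theorem \ref{muLSI} -- this forces $L^2(\mu)$-convergence $\Pi^nf\to\mu f$, hence a.s.\ convergence along a subsequence. Any Gibbs measure $\nu$ satisfies $\nu\Pi^n=\nu$ by DLR, so the limit forces $\nu=\mu$ on cylinder functions and hence on all of $\mathfrak F(\Omega)$.

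\textbf{Main obstacle.} The crux lies in (iii): producing the strict contraction $\gamma<1$ demands quantitatively balancing the SGI contraction inside each cube $X_k$ against the combinatorial weight of cross-cube interactions supported on the $R$-thick boundary of $X_0$. The freedom to tune $L$ large -- exploiting that the boundary-to-volume ratio of $X_0=[0,2(L+R)]^d\cap\mathbb Z^d$ vanishes as $L\to\infty$ while the finite-volume LSI constant $\tilde c$ stays fixed -- is what makes the estimate close, and it subsequently feeds the telescoping entropy argument delivering the global LSI for $\mu$ in Theorem \ref{muLSI}.
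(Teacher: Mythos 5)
Your overall architecture (tensorised LSI per layer $\mathbb E_s$, sweeping the gradient through the layers, telescoping the entropy, extracting a contraction, then passing to the limit) matches the paper's, but there are three genuine gaps. First, your sweeping-out step is not justified as stated: the covariance term $\mathrm{Cov}_{\mathbb E_{X_k}}(\nabla_jU,G)$ involves $\nabla_jU$, which is \emph{unbounded} (already for the bilinear potential $\nabla_jU=\sum_iC_{ij}x_i$), so the SGI inside $X_k$ plus $\|\nabla_i\nabla_jU\|_\infty\leq A$ alone do not control it. The paper has to invoke the mixing condition \eqref{StrongMixing} (Theorem \ref{thm:Mixing}, the equivalence of the uniform LSI with exponential decay of correlations) together with a relative entropy inequality and the exponential-moment bound of Lemma \ref{expBound} to handle the factor $(x_i-\tilde x_i)^2$. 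More importantly, the output of that lemma is a bound $\mu|\nabla_j(\mathbb E_{X_k}|f|^2)^{1/2}|^2\leq\sum_i\alpha_{ji}\mu|\nabla_if|^2$ with $\alpha_{ji}\leq D|X_0|e^{-M|j-i|}$; the exponential decay in $|j-i|$ is essential and your version of the estimate (constants depending only on $A$, $R$ and neighbour counts) does not produce it.

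Second, your mechanism for the strict contraction in (iii) is not the one that actually closes. The per-site sweeping coefficients are of order $|X_0|$ for nearby sites and \emph{grow} polynomially in $L$, so a boundary-to-volume heuristic cannot by itself yield $\gamma<1$. The paper's argument is geometric: because consecutive overlapping cubes intersect in a slab of thickness $L$, every sweeping path $\{j,i_1,\dots,i_{2^d-1},i\}$ through all $2^d$ layers must contain a jump of length at least $L/2$, which via the exponential decay of $\alpha_{ji}$ contributes a factor $e^{-L/2}$ that beats the polynomial growth $(D|X_0|C_d)^{2^d}$ for $L$ large. Without the exponential decay from the first point, this saving is unavailable. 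Third, your proof of (iv) is circular: you invoke a Poincar\'e inequality for $\mu$, but that is only obtained \emph{after} the $\mu$-LSI of Theorem \ref{muLSI}, whose proof itself requires (iv) to kill the remainder term. The paper instead bounds $\mu|f-\Pi f|^2$ using only the layer-wise SGI for the $\mathbb E_s$ (giving $\mu|\Pi^nf-\Pi^{n+1}f|^2\leq 2^{2^d}\bar c\,\beta^n\mu|\nabla f|^2$), applies Borel--Cantelli for a.s.\ convergence, and identifies the limit as the constant $\mu f$ because the gradients contract to zero; you should restructure (iv) along these lines.
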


Before proving Proposition \ref{TechnicalProp}, we first demonstrate how to derive the $\mu$-LSI \eqref{LSImu} by using $\Pi$.\vspace{.2cm}

\noindent
\begin{proof}[Proof of Theorem \ref{muLSI}]
By (i), we have
\[\mu\left(|f|^2\log\frac{|f|^2}{\mu |f|^2}\right)=\sum_{n=0}^{N-1}\mu\Big(\Pi\big(|f_n|^2\log|f_n|^2\big)-\Pi|f_n|^2\log\Pi |f_n|^2\Big)+\mu\left(|f_N|^2\log\frac{|f_N|^2}{\mu |f|^2}\right)\]
then by (ii), it follows
\[\mu\left(|f|^2\log\frac{|f|^2}{\mu |f|^2}\right)\leq\bar c\sum_{n=0}^{N-1}\mu|\nabla f_n|^2+\mu\left(|f_N|^2\log\frac{|f_N|^2}{\mu |f|^2}\right).\]
Since $\gamma\in(0,1)$, applying (iii) gives
\[\mu\left(|f|^2\log\frac{|f|^2}{\mu |f|^2}\right)\leq\frac{\bar c}{1-\gamma}\mu|\nabla f|^2+\mu\left(|f_N|^2\log\frac{|f_N|^2}{\mu |f|^2}\right).\]
Then by (iv) and the monotone convergence theorem, the last term follows
\[\mu\left(|f_N|^2\log\frac{|f_N|^2}{\mu |f|^2}\right)\to0\]
as $N\to\infty$, which gives the desired LSI for $\mu$ with coefficient $c\equiv\bar c/(1-\gamma)$.\vspace{.2cm}
\end{proof}

\begin{remark}
The choice of $\Pi$ is not unique.  In the proof Proposition \ref{TechnicalProp} the base cube $X_0$ can in fact take any shape as long as it is optimised under a set of conditions. Our choice of homogeneous cube partition is just to provide an easy presentation of the formulae.
\end{remark}

\subsection{Proof of Proposition \ref{TechnicalProp}}

Here we present a proof of conditions (i) to (iv). Condition (i) easily follows from the DLR equation \eqref{DLR} for each local Gibbs measure $\mathbb E_{X_k}$ in the cube setup. To verify conditions (ii) to (iv), we use the following lemma.
\begin{lemma} \label{LSIsweepout}
Suppose all conditions in Theorem \ref{muLSI} are satisfied, then for any cube $X_k$ and $j\notin{X_k}$ the `sweeping out' inequalities
\eq{\mu\Big|\nabla_j(\mathbb E_{X_k}f)\big|^2\leq \sum_{i\in\mathbb Z^d}\alpha_{ji}\mu|\nabla_if|^2 \label{sweepout2nd}}
and
\eq{\mu\Big|\nabla_j\big(\mathbb E_{X_k} |f|^2\big)^\frac12\big|^2\leq \sum_{i\in\mathbb Z^d}\alpha_{ji}\mu|\nabla_if|^2 \label{sweepout}}
hold with $0\leq\alpha_{ji}\leq D|X_0|e^{-M|j-i|}$ for some positive constants $D,M$ independent of $j$ and $X_k$.
\end{lemma}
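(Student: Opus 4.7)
The proof rests on the differentiation identity, valid for $j\notin X_k$ and any smooth $h$,
\begin{equation*}
\nabla_j \mathbb{E}_{X_k} h = \mathbb{E}_{X_k}(\nabla_j h) - \mathrm{Cov}_{X_k}(h, \nabla_j U_{X_k}),
\end{equation*}
obtained by differentiating $\mathbb{E}_{X_k}h = Z_{X_k}^{-1}\int h\, e^{-U_{X_k}}\,dx_{X_k}$ under the integral sign. By the finite range assumption, the covariance piece is supported in the boundary layer $\dist(j, X_k)\leq R$. The plan is to apply this identity with $h=f$ for \eqref{sweepout2nd} and $h=|f|^2$ for \eqref{sweepout}, split via $|a+b|^2\leq 2|a|^2+2|b|^2$, control the direct term by Jensen and Cauchy--Schwarz, and estimate the covariance using the spectral gap (Lemma \ref{LSItoSGI}, a consequence of the assumed LSI \eqref{LambdaLSI}) together with the uniform bound $\|\nabla_i\nabla_j U_{X_k}\|_\infty\leq A$.

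For \eqref{sweepout2nd}, the direct term contributes $\mu|\nabla_j f|^2$ by Jensen and the DLR equation. For the covariance, Cauchy--Schwarz gives $|\mathrm{Cov}_{X_k}(f,g)|^2\leq \mathrm{Var}_{X_k}(f)\cdot\mathrm{Var}_{X_k}(g)$ with $g=\nabla_j U_{X_k}$. The spectral gap bounds $\mathrm{Var}_{X_k}(f)\leq(\tilde c/2)\mathbb{E}_{X_k}|\nabla_{X_k}f|^2$; applied to $g$, using $\|\nabla_ig\|_\infty\leq A$ and the finite range, it gives $\mathrm{Var}_{X_k}(g)\leq (\tilde c/2)(2R+1)^d A^2$ uniformly in $\omega$. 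Integrating in $\mu$ and invoking DLR assembles a bound of the form $2\mu|\nabla_j f|^2+C\sum_{i\in X_k}\mu|\nabla_if|^2$; since the non-trivial indices all satisfy $|i-j|\leq R+\mathrm{diam}(X_k)$, this fits under the envelope $D|X_0|e^{-M|i-j|}$ for suitable $D,M>0$.

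For \eqref{sweepout}, set $F\equiv(\mathbb{E}_{X_k}|f|^2)^{1/2}$ so that $|\nabla_j F|^2=|\nabla_j F^2|^2/(4F^2)$. The direct piece is handled by Cauchy--Schwarz, $|\mathbb{E}_{X_k}(\bar f\nabla_j f)|^2\leq F^2\,\mathbb{E}_{X_k}|\nabla_j f|^2$, cleanly absorbing the $F^2$ denominator. The surviving covariance is the main technical hurdle; the key manoeuvre is the factorisation $|f|^2-F^2=(|f|-F)(|f|+F)$, which by Cauchy--Schwarz yields
\begin{equation*}
|\mathrm{Cov}_{X_k}(|f|^2,g)|^2\leq \mathbb{E}_{X_k}(|f|+F)^2\cdot\mathbb{E}_{X_k}\bigl((|f|-F)^2(g-\bar g)^2\bigr),
\end{equation*}
and since $\mathbb{E}_{X_k}(|f|+F)^2\leq 4F^2$, the offending $F^2$ cancels. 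The mixed fourth moment is then controlled by H\"older together with Herbst's sub-Gaussian concentration (from the LSI, yielding $\mathbb{E}_{X_k}(g-\bar g)^4$ bounded in terms of the Lipschitz constant $A\sqrt{(2R+1)^d}$ of $g$) and by the spectral gap applied to $|f|$ through the diamagnetic inequality $|\nabla|f||\leq|\nabla f|$ noted in the remark following Lemma \ref{LSItoSGI}. Integration in $\mu$ and DLR then assemble the bound in the required form.

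The hard part is precisely this last estimate in \eqref{sweepout}: the square-root structure of $F$ forces the normalisation to appear in the denominator, and a naive Cauchy--Schwarz leaves uncancelled factors of $|f|$ or $F$. The factorisation $(|f|-F)(|f|+F)$ is the pivot that extracts one factor of $F$ (from $|f|+F$) to cancel the denominator, while leaving $|f|-F$ as a fluctuation the spectral gap can control via $|\nabla|f||\leq|\nabla f|$. Bookkeeping the constants so that $D,M$ remain independent of $j$ and $X_k$ (but may depend on $|X_0|$) is then routine.
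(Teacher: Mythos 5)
Your starting identity, the $|a+b|^2$ splitting, and the trick of factorising a difference of squares to cancel the $(\mathbb E_{X_k}|f|^2)^{1/2}$ denominator are all in the spirit of the paper's argument (which symmetrises with an independent copy and writes $F^2-\tilde F^2=(F-\tilde F)(F+\tilde F)$). But there is a fatal gap: you never actually prove the exponential decay $e^{-M|j-i|}$. Your covariance estimate via Cauchy--Schwarz and the spectral gap yields $\alpha_{ji}\le C$ uniformly for all $i\in X_k$, with no dependence on $|j-i|$, and you then ``fit this under the envelope'' $D|X_0|e^{-M|j-i|}$ using only the fact that $|j-i|\le R+\mathrm{diam}(X_0)$. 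That forces $D$ (or $1/M$) to grow exponentially in $L$, where $X_0=[0,2(L+R)]^d$. This is not a harmless bookkeeping issue: the lemma is used in Proposition \ref{TechnicalProp}(iii) precisely through the fact that every path $\{j,i_1,\dots,i\}$ contains a jump of length $\ge L/2$, which is converted into a contraction factor $e^{-ML/2}$ beating the polynomial-in-$L$ volume factors; that mechanism requires $D$ polynomial in $L$ and $M$ bounded below independently of $L$. With a constant (non-decaying) $\alpha_{ji}$ inside each cube, $\gamma=(D|X_0|C_d)^{2^d}e^{-L/2}$ cannot be made less than $1$, and the whole construction of the limit measure collapses. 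The genuine decay in the paper comes from a source absent in your proposal: the equivalence of the volume-uniform LSI with the exponential mixing condition \eqref{StrongMixing} (Theorem \ref{thm:Mixing}), applied to the covariance $\mathbb E_{\Lambda\setminus i}(\nabla_iU_{\Lambda\setminus i};\nabla_jU_{\Lambda\setminus i})$ of two observables localised near $i$ and near $j$. To access this, the paper first uses compatibility, $\mathbb E_\Lambda=\mathbb E_\Lambda\mathbb E_{\Lambda\setminus i}$, to reduce to $F=(\mathbb E_{\Lambda\setminus i}|f|^2)^{1/2}$, so that the conditioned interaction $\mathcal U=\mathbb E_{\Lambda\setminus i}(-\nabla_jU_\Lambda)$ depends on the single variable $x_i$ and its Lipschitz constant is exactly the mixing covariance; it then iterates this one-site-at-a-time estimate along a sequence $i_1,i_2,\dots$ approaching $j$, terminating with a crude non-decaying bound. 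None of this structure is present in your write-up.

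A secondary problem is your treatment of the mixed term $\mathbb E_{X_k}\bigl((|f|-F)^2(g-\bar g)^2\bigr)$ by H\"older into fourth moments: the spectral gap controls $\mathbb E_{X_k}(|f|-F)^2$ (via $|\nabla|f||\le|\nabla f|$), but not $\mathbb E_{X_k}(|f|-F)^4$ in terms of $\mathbb E_{X_k}|\nabla f|^2$, so that step does not close. The paper avoids fourth moments of $f$ altogether by using the relative entropy inequality \eqref{relaEnt} together with the LSI for the doubled measure and Lemma \ref{expBound} (a Herbst-type exponential moment bound applied to $\hat x_i^2$, not to $f$), which keeps everything at the level of $\mathbb E_\Lambda|\nabla_iF|^2$.
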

Assuming Lemma \ref{LSIsweepout} is true, we now prove condition (ii). Denote by $f_{-1}\equiv f$ and $f_s\equiv(\mathbb E_s\mathbb E_{s-1}\cdots\mathbb E_0|f|^2)^{1/2}$, one has the telescopic representation
\[\mu\Big(\Pi\big(|f|^2\log|f|^2\big)-\Pi|f|^2\log\Pi|f|^2\Big)=\sum_{s=0}^{2^d-1}\mu \mathbb E_s\left(f_{s-1}^2\log\frac{f_{s-1}^2}{\mathbb E_sf_{s-1}^2}\right)\]
by the DLR equation. For any $s$, each local measure $\mathbb E_{X_k}$ with $X_k\subset\Gamma_s$ satisfies a LSI with coefficient $\tilde c$ by \eqref{LambdaLSI}, their tensorisation $\mathbb E_s$ therefore satisfies the same LSI by Lemma \ref{LSISGItensorise}, leading to 
\eq{\mu\Big(\Pi\big(|f|^2\log|f|^2\big)-\Pi|f|^2\log\Pi|f|^2\Big)\leq\tilde c\sum_{s=0}^{2^d-1}\mu|\nabla_{\Gamma_s}f_{s-1}|^2. \label{LSIforPi}}
We now present a simple study of the quantity $\nabla_{\Gamma_s}f_{s-1}$. For any $j\in\Gamma_s$, if $j\in\Gamma_{s-1}$ then $\nabla_jf_{s-1}=0$. If $j\notin\Gamma_{s-1}$, there exists only one cube $X_{k(j)}\subset\Gamma_{s-1}$ such that ${\rm dist}(j,X_{k(j)})\leq R$. In the second case, we have
\[\nabla_jf_{s-1}=\nabla_j(\mathbb E_{\Gamma_{s-1}\setminus X_{k(j)}}\mathbb E_{X_{k(j)}}f_{s-2}^2)^\frac12\leq\mathbb E_{\Gamma_{s-1}\setminus X_{k(j)}}\Big|\nabla_j(\mathbb E_{X_{k(j)}}f_{s-2}^2)^\frac12\Big|^2\]
by Cauchy-Schwartz inequality. Integrating this with $\mu$ and using the sweeping out inequality \eqref{sweepout} for $\Lambda= X(j)$, it follows
\[\mu|\nabla_jf_{s-1}|^2\leq \sum_{i\in\mathbb Z^d\setminus\Gamma_{s-2}}\alpha_{ji}\mu\abs{\nabla_{i}f_{s-2}}^2,\]
where we have $i\in\mathbb Z^d\setminus\Gamma_{s-2}$ because $f_{s-2}=(\mathbb E_{\Gamma_{s-2}}|f_{s-3}|^2)^{1/2}$ has no dependence in $\Gamma_{s-2}$. Now that $\nabla_jf_{s-1}$ is bounded by $\nabla_if_{s-2}$, the same estimate can be used recursively until we reach the gradients on $f$, i.e.
\eq{\mu|\nabla_jf_{s-1}|^2\leq\sum_{i\in\mathbb Z^d}\eta^{(s)}_{ji}\mu\abs{\nabla_if}^2,  \label{sweepiterate}}
where 
\eq{\eta^{(s)}_{ji}\equiv\sum_{i_1\in\mathbb Z^d\setminus\Gamma_{s-2}}\,\sum_{i_2\in\mathbb Z^d\setminus\Gamma_{s-3}}\cdots\sum_{i_{s-1}\in\mathbb Z^d\setminus\Gamma_0}\big(\alpha_{ji_1}\alpha_{i_1i_2}\cdots\alpha_{i_{s-1}i}\big). \label{etaji}}
Summing \eqref{sweepiterate} over all $j\in\Gamma_s$, the constant before $\mu|\nabla_if|^2$ is therefore
\eq{\sum_{j\in\Gamma_s}\eta^{(s)}_{ji}\leq\bigg(\sup_{i\in\mathbb Z^d}\sum_{k\in\mathbb Z^d}\alpha_{ki}\bigg)^s\leq (D|X_0|)^s\bigg(\sum_{k\in\mathbb Z^d}e^{-M|k|}\bigg)^s\leq (D|X_0|C_d)^s \label{etasum}}
for some finite constant $C_d=\sum_ke^{-M|k|}$ that only depends on $M$ and the lattice dimension $d$. Taking this back into \eqref{LSIforPi}, one gets 
\eq{\mu\Big(\Pi\big(|f|^2\log|f|^2\big)-\Pi|f|^2\log\Pi|f|^2\Big)\leq\bar c\mu|\nabla f|^2 \label{(III)Proof}}
with $\bar c\equiv\tilde c\big(1+(D|X_0|C_d)+...+(D|X_0|C_d)^{2^d}\big)$, which concludes the proof of (ii).\vspace{.2cm}

To verify (iii), we use the same estimate as in \eqref{sweepiterate} with $s=2^d$, which gives
\eq{\mu\big|\nabla(\Pi |f|^2)^\frac12\big|^2=\mu|\nabla_jf_{2^d-1}|^2\leq\sum_{i\in\mathbb Z^d}\eta^{(2^d)}_{ji}\mu|\nabla_if|^2. \label{eqn:PiGradBound1}}
By the cube construction \eqref{Gammas}, the intersection $X_k\cap X_{k'}$ between any two overlapping cubes $X_k$ and $X_{k'}$ is a (hyper)rectangle whose shortest edge has a length of $L$. Therefore, along each path $\{j,i_1,i_2,...,i_{2^d-1},i\}$ in the sum \eqref{etaji} with $s=2^d$, there exists at least one pair of adjacent sites $(i_n, i_{n+1})$ such that $|i_n-i_{n+1}|\geq L/2$. Incorporate this property into the estimate \eqref{etasum}, one gets
\eq{\mu\big|\nabla(\Pi |f|^2)^\frac12\big|^2\leq(D|X_0|C_d)^{2^d}e^{-L/2}\mu|\nabla f|^2. \label{eqn:PiGradBound2}}
Since $|X_0|$ grows polynomially with $L$, one can make $L$ sufficiently large so that $\gamma\equiv(D|X_0|C_d)^{2^d}e^{-L/2}<1$ and this concludes (iii).\vspace{.2cm}

To verify (iv), we redefine $f_{-1}\equiv f$ and $f_s\equiv\mathbb E_s\mathbb E_{s-1}\cdots\mathbb E_0(f)$. By the argument for \eqref{LSIforPi}, each $\mathbb E_s$ satisfies a LSI (and thus a SGI by Lemma \ref{LSItoSGI}) with coefficient $\tilde c$. Hence, one can bound the $\Pi$-variance by
\[\mu|f-\Pi f|^2\leq2^{2^d}\sum_{s=0}^{2^d-1}\mu\mathbb E_s|f_{s-1}-\mathbb E_sf_{s-1}|^2\leq2^{2^d}\tilde c\sum_{s=0}^{2^d-1}\mu|\nabla_{\Gamma_s}f_{s-1}|^2. \]
Comparing this with \eqref{LSIforPi}, the situation is essentially the same and therefore we can follow the steps from \eqref{LSIforPi} until \eqref{(III)Proof} to conclude that
\[\mu\Pi|f-\Pi f|^2\leq2^{2^d}\bar c\mu|\nabla f|^2\]
for the same $\bar c$ in \eqref{(III)Proof}. Similarly, the argument of \eqref{eqn:PiGradBound1} and \eqref{eqn:PiGradBound2} leads to the gradient bound
\[\mu|\nabla(\Pi f)|^2\leq\beta\mu|\nabla f|^2\]
for some $\beta\in(0,1)$. Using the above two estimates with $f$ replaced by $\Pi^nf$, we obtain that
\[\mu|\Pi^nf-\Pi^{n+1}f|^2\leq2^{2^d}\bar c\mu|\nabla(\Pi^nf)|^2\leq2^{2^d}\beta^n\bar c\mu|\nabla f|^2.\]
Since $\beta\in(0,1)$ and $\mu|\nabla f|^2<\infty$ for $f\in H^1(\mu)$, we can use Borel-Cantelli lemma to show that the sequence $\{\Pi^nf\}_n$ converges $\mu$-almost surely to some function $F_{\rm lim}:\Omega\to\mathbb C$ (for the technical details, see \cite[Lemma~5.4.6]{Ing}). Similarly, one can prove that $|\nabla(\Pi^nf)|$ converges to zero $\mu$-a.s., meaning that the limit $F_{\rm lim}$ must be a constant function. Therefore, we have 
\[\qquad\qquad\qquad\lim_{n\to\infty}\Pi^nf=F_{\rm lim}=\mu(F_{\rm lim})=\lim_{n\to\infty}\mu(\Pi^nf)=\mu(f)\qquad\mu\text{\,-\,a.s.}\]
where the limit can be taken out of $\mu$ by dominated convergence theorem (since $f$ is bounded), and the last equality follows from the DLR equation (i).

\subsection{Proof of Lemma \ref{LSIsweepout}}
We will first prove \eqref{sweepout} and then \eqref{sweepout2nd} follows similarly. Let $\Lambda$ be any cube $X_k$ in our setup (thus ${\rm diam(\Lambda)>2R}$). For any $j\notin\Lambda$ let $i\in\Lambda$ be such that $|j-i|>R$ and we denote by $F\equiv(\mathbb E_{\Lambda\setminus i}|f|^2)^{1/2}$. By the compatibility property $\mathbb E_\Lambda=\mathbb E_\Lambda\mathbb E_{\Lambda\setminus i}$ in Lemma \ref{localELambda}, we have
\eq{\Big|\nabla_j\big(\mathbb E_\Lambda |f|^2\big)^\frac12\Big|^2=\Big|\nabla_j\big(\mathbb E_\Lambda F^2\big)^\frac12\Big|^2\leq\frac{\big|\mathbb E_\Lambda(\nabla_j F^2)\big|^2}{4\mathbb E_\Lambda F^2}+\frac{\big|\mathbb E_\Lambda (F^2; -\nabla_j U_\Lambda)\big|^2}{4\mathbb E_\Lambda F^2} \label{gradjEf2}}
with the covariance $\mathbb E_\Lambda(f;g)\equiv \mathbb E_\Lambda(fg)-(\mathbb E_\Lambda f)(\mathbb E_\Lambda g)$. Let $I_1$ and $I_2$ be the first and second term on the right hand side of \eqref{gradjEf2}, then it simply follows $I_1\leq\mathbb E_\Lambda|\nabla_jF|^2$
by Cauchy-Schwartz inequality. For $I_2$, since $F$ has no dependence in $\Lambda\setminus i$ the covariance follows $\mathbb E_\Lambda (F^2; -\nabla_j U_\Lambda)=\mathbb E_\Lambda (F^2;\mathcal U)$ with $\mathcal U\equiv\mathbb E_{\Lambda\setminus i}(-\nabla_jU_\Lambda)$. To continue the calculation, we define $\tilde{\mathbb E}_\Lambda\equiv\mathbb E_\Lambda(d\tilde x_\Lambda)$ as an isomorphic copy of $\mathbb E_\Lambda(dx_\Lambda)$ on a variable $\tilde x_\Lambda$ independent of $x_\Lambda$, and similarly $\tilde F\equiv F(\tilde x_\Lambda)$, $\tilde{\mathcal U}\equiv\mathcal U(\tilde x_\Lambda)$, while all configurations outside $\Lambda$ are fixed by the same boundary conditions $\omega_{\Lambda^{\mathsf c}}$. Now we can rewrite the covariance as
\begin{align}
    \big|\mathbb E_\Lambda (F^2;\mathcal U)\big|^2&=\frac14\Big|\mathbb E_\Lambda\tilde{\mathbb E}_\Lambda\Big(\big(F^2-\tilde F^2\big)(\mathcal U-\tilde{\mathcal U})\Big)\Big|^2 \nonumber\\
    &\leq\frac14\Big(\mathbb E_\Lambda\tilde{\mathbb E}_\Lambda(F+\tilde F)^2\Big)\mathbb E_\Lambda\tilde{\mathbb E}_\Lambda\Big((F-\tilde F)^2(\mathcal U-\tilde{\mathcal U})^2\Big) \nonumber\\
    &\leq\Big(\mathbb E_\Lambda F^2\Big)\mathbb E_\Lambda\tilde{\mathbb E}_\Lambda\Big((F-\tilde F)^2(\mathcal U-\tilde{\mathcal U})^2\Big). \label{FUCov}
\end{align}
Combining $I_1$ and $I_2$, we obtain the bound
\eq{\Big|\nabla_j\big(\mathbb E_\Lambda |f|^2\big)^\frac12\Big|^2\leq\mathbb E_\Lambda|\nabla_jF|^2+\frac14\mathbb E_\Lambda\tilde{\mathbb E}_\Lambda\Big((F-\tilde F)^2(\mathcal U-\tilde{\mathcal U})^2\Big). \label{eqn:nablajEf}}
Note that $\mathcal U$ only has one free variable on $x_i$ since all configurations are integrated by $\mathbb E_{\Lambda\setminus i}$ inside $\Lambda\setminus i$ and fixed by $\omega$ outside $\Lambda$. Therefore, we can write $\mathcal U(x_\Lambda)=\mathcal U(x_i)$ and estimate the difference
\[|\mathcal U(x_i)-\mathcal U(\tilde x_i)|=\bigg|\int_{\tilde x_i}^{x_i}\nabla_i\,\mathcal U(\hat x_i)\,d\hat x_i\bigg|\leq|x_i-\tilde x_i|\|\nabla_i\,\mathcal U\|_\infty=|x_i-\tilde x_i|\big\|\mathbb E_{\Lambda\setminus i}(\nabla_iU_{\Lambda\setminus i};\nabla_jU_{\Lambda\setminus i})\big\|_\infty \]
where the last step follows from the fact $\nabla_jU_\Lambda=\nabla_jU_{\Lambda\setminus i}$ since $|i-j|>R$. 

To bound the above covariance, we use the following result adapted from \cite[Theorem~2.4 and Remark~2.6]{HM}. 
\begin{thm} \label{thm:Mixing}
If $\|\nabla_i\nabla_jU_\Lambda\|_\infty\leq Ae^{-|j-i|}$ for some $A>0$ independent of $i,j,\Lambda$, then the volume-uniform LSI \eqref{LambdaLSI} is equivalent to the following mixing condition
\eq{|\mathbb E_\Lambda(f;g)|^2\leq C|\Lambda_f||\Lambda_g|e^{-M{\rm dist}(\Lambda_f,\Lambda_g)}\Big(\mathbb E_\Lambda|\nabla_\Lambda f|^2\Big)\Big(\mathbb E_\Lambda|\nabla_\Lambda g|^2\Big) \label{StrongMixing}}
for all $f,g\in H^1(\mu)$ localised in $\Lambda_f,\Lambda_g$ respectively and $C,M>0$ independent of $\Lambda,f,g$.
\end{thm}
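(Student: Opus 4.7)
The plan is to establish both directions of the equivalence, following the general strategy of \cite{HM} adapted to the present non-compact continuous-spin setting. Since only LSI $\Rightarrow$ \eqref{StrongMixing} is invoked in the downstream proof of Lemma \ref{LSIsweepout}, I would treat that direction in detail and only sketch the converse.

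For LSI $\Rightarrow$ \eqref{StrongMixing}, first invoke Lemma \ref{LSItoSGI} to extract a spectral gap for $-\mathcal{L}_\Lambda$ with coefficient $c_{SG}\leq \tilde c/2$. Let $P_t^\Lambda\equiv e^{t\mathcal{L}_\Lambda}$ be the associated reversible Markov semigroup on $L^2(\mathbb{E}_\Lambda)$. By reversibility,
\[
\mathbb{E}_\Lambda(f;g)\;=\;\int_0^\infty \mathbb{E}_\Lambda\big(\nabla_\Lambda f\cdot \nabla_\Lambda P_t^\Lambda g\big)\,dt.
\]
The heart of the argument is a spatial decay estimate for $\nabla_i P_t^\Lambda g$. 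The commutation relation
\[
[\nabla_i,\mathcal{L}_\Lambda]\;=\;-\sum_{k\in\Lambda}(\nabla_i\nabla_k U_\Lambda)\,\nabla_k
\]
shows that the family $(\nabla_i P_t^\Lambda g)_{i\in\Lambda}$ evolves under $\mathcal{L}_\Lambda$ with a linear coupling whose off-diagonal entries obey $|\nabla_i\nabla_k U_\Lambda|\leq Ae^{-|i-k|}$. A Duhamel iteration then produces a bound of the form
\[
\mathbb{E}_\Lambda\big|\nabla_i P_t^\Lambda g\big|^2\;\leq\;\sum_{k\in\Lambda_g}\alpha_{ik}(t)\,\mathbb{E}_\Lambda|\nabla_k g|^2,
\]
with coefficients $\alpha_{ik}(t)$ decaying exponentially in $|i-k|$ at any prescribed rate $M'<1$, with only a polynomial-in-$t$ prefactor. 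Split $\int_0^\infty dt$ at $t\asymp {\rm dist}(\Lambda_f,\Lambda_g)$: the short-time part produces the factor $e^{-M\,{\rm dist}(\Lambda_f,\Lambda_g)}$ through the spatial decay of $\alpha_{ik}$, while the long-time part is controlled by $\|P_t^\Lambda g-\mathbb{E}_\Lambda g\|_{L^2(\mathbb{E}_\Lambda)}\leq e^{-t/c_{SG}}\|g\|_{L^2(\mathbb{E}_\Lambda)}$ from the spectral gap. A final Cauchy--Schwarz over $(i,k)\in\Lambda_f\times\Lambda_g$ yields the volume factors $|\Lambda_f||\Lambda_g|$ together with the target gradient norms.

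For the converse \eqref{StrongMixing} $\Rightarrow$ LSI, one parallels the sweeping-out/telescoping construction of Proposition \ref{TechnicalProp} but applied \emph{inside} $\Lambda$: partition $\Lambda$ into well-separated small cubes, apply a one-cube LSI (available under the Hessian hypothesis via Bakry-Emery), and control the sweeping error terms $\mathbb{E}_\Lambda|\nabla_j \mathbb{E}_{X_k}f|^2$ by rewriting them as covariances against $\nabla_j U_{X_k}$ and applying \eqref{StrongMixing} to extract exponentially decaying couplings $\alpha_{ji}$. Choosing the cube side $L$ large enough closes the induction and yields the volume-uniform LSI \eqref{LambdaLSI}. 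The main technical obstacle in the forward direction is that the diagonal term $\nabla_i^2 U_\Lambda$ is not small under the hypothesis and acts as a potentially large self-coupling in the evolution of $\nabla_i P_t^\Lambda g$; it must be absorbed by exploiting the lower bound on $\nabla_i^2 U_\Lambda$ implicit in the LSI (via the Bakry-Emery mechanism of Remark \ref{rmk:BE}) as a dissipative diagonal contribution, so that only the off-diagonal exponential decay drives the spatial spreading of $\alpha_{ik}(t)$.
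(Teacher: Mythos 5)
The paper does not prove this statement at all: Theorem \ref{thm:Mixing} is imported verbatim, ``adapted from [HM, Theorem~2.4 and Remark~2.6]'', and is used as a black box inside the proof of Lemma \ref{LSIsweepout}. So any comparison is between your sketch and the literature result the paper cites, not a proof in the paper itself. Your forward direction follows the classical Stroock--Zegarlinski semigroup route (covariance as $\int_0^\infty \mathbb{E}_\Lambda(\nabla_\Lambda f\cdot\nabla_\Lambda P_t^\Lambda g)\,dt$ plus finite speed of propagation for gradients), which is a legitimate strategy for \emph{compact} spins and bounded interactions, but in the present unbounded-spin setting your treatment of the key obstacle is not correct.

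The gap is in your final paragraph. You propose to absorb the diagonal self-coupling $\nabla_i^2 U_\Lambda$ in the Duhamel iteration ``by exploiting the lower bound on $\nabla_i^2 U_\Lambda$ implicit in the LSI (via the Bakry--Emery mechanism of Remark \ref{rmk:BE})''. But the LSI does not imply any pointwise lower bound on $\nabla_i^2 U_\Lambda$: Bakry--Emery is a sufficient condition, not a necessary one, and the paper's own Example \ref{Exmp:bilinearPerturbed} (bilinear potential plus bounded $C^2$ perturbations $\varepsilon W_X$) satisfies the uniform LSI while $\Delta_i U_\Lambda$ can be negative on sets of positive measure. Without a dissipative diagonal term, the linear system governing $(\nabla_i P_t^\Lambda g)_i$ can grow like $e^{Kt}$ with $K$ unrelated to the spectral gap, and then the split of $\int_0^\infty dt$ at $t\asymp \mathrm{dist}(\Lambda_f,\Lambda_g)$ does not close: the short-time piece costs $e^{Kt}$ against a gain of only $e^{-M't}$ from the spatial decay, and there is no a priori relation forcing $M'>K$. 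This is precisely why the rigorous proofs of this equivalence for unbounded spins do not go through sup-norm semigroup commutation estimates but through the Helffer--Sj\"ostrand/Witten-Laplacian representation of the covariance (or two-scale arguments), where the relevant positivity is that of the operator $-\mathcal{L}_\Lambda\otimes I+\mathrm{Hess}\,U_\Lambda$ on one-forms rather than pointwise convexity --- and why the paper cites [HM] rather than proving the statement. Your converse direction is only gestured at; for unbounded spins ``mixing $\Rightarrow$ uniform LSI'' is itself a substantial theorem requiring additional inputs (e.g.\ single-cube LSI uniform in boundary conditions and moment estimates), not merely a rerun of the sweeping-out telescoping of Proposition \ref{TechnicalProp}.
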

Note that the condition on $\nabla_i\nabla_jU_\Lambda$ is trivially satisfied by the condition in Theorem \ref{muLSI} and the fact that the interaction is of finite range. Therefore, one can apply \eqref{StrongMixing} to obtain the following covariance bound 
\[\big\|\mathbb E_{\Lambda\setminus i}(\nabla_iU_{\Lambda\setminus i};\nabla_jU_{\Lambda\setminus i})\big\|_\infty^2\leq CA^4e^{2MR}(2R+1)^{4d}e^{-M|j-i|}.\] Collecting every piece together in \eqref{FUCov}, we have
\[I_2\leq\frac14CA^4e^{2MR}(2R+1)^{4d}e^{-M|j-i|}\mathbb E_\Lambda\tilde{\mathbb E}_\Lambda\Big((F-\tilde F)^2(x_i-\tilde x_i)^2\Big).\]
For simplicity let $\hat F\equiv F-\tilde F$, $\hat x_i\equiv x_i-\tilde x_i$ and $\hat{\mathbb E}_\Lambda\equiv\mathbb E_\Lambda\tilde{\mathbb E}_\Lambda$, one has the following relative entropy inequality 
\eq{\hat{\mathbb E}_\Lambda\Big(\hat F^2\hat x_i^2\Big)\leq\frac1\varepsilon\hat{\mathbb E}_\Lambda\bigg(\hat F^2\log\frac{\hat F^2}{\hat{\mathbb E}_\Lambda\hat F^2}\bigg)+\frac1\varepsilon\bigg(\log e^{\varepsilon\hat{\mathbb E}_\Lambda\hat x_i^2}\bigg)\hat{\mathbb E}_\Lambda\hat F^2 \label{relaEnt}}
for any $\varepsilon>0$. By the uniform LSI \eqref{LambdaLSI} and Lemma \ref{LSISGItensorise}, the double measure $\hat{\mathbb E}_\Lambda$ satisfies a LSI with coefficient $\tilde c$ and thus the entropy term is bounded by $\tilde c\,\hat{\mathbb E}_\Lambda(|\nabla_{x_\Lambda}\hat F|^2+|\nabla_{\tilde x_\Lambda}\hat F|^2)=2\tilde c\,\mathbb E_\Lambda|\nabla_iF|^2$, where only $\nabla_i$ remains since $F$ has no dependence in $\Lambda\setminus i$. By Lemma \ref{LSItoSGI} the measure $\hat{\mathbb E}_\Lambda$ also satisfies an SGI with coefficient $\tilde c/2$, hence 
\[\hat{\mathbb E}_\Lambda\hat F^2=2\mathbb E_\Lambda(F-\mathbb E_\Lambda F)^2\leq\tilde c\,\mathbb E_\Lambda|\nabla_\Lambda F|^2=\tilde c\,\mathbb E_\Lambda|\nabla_iF|^2.\]
For the log-term on $\hat{\mathbb E}_\Lambda\hat x_i^2$, we use the following lemma adapted from \cite[Theorem~4.5]{HeZeg}.
\begin{lemma}\label{expBound}
Let $\nu$ be a probability measure on $\mathbb R^n$ that satisfies a LSI with coefficient $\tilde c>0$. Let $g:\mathbb R^n\to\mathbb R$ be any differentiable function such that $|\nabla_{\mathbb R^n}g|^2\leq ag$ for some $a>0$, then for all $\varepsilon\in(0,1/a\tilde c)$ the following bound holds
\[\log\nu\big(e^{\varepsilon g}\big)\leq2\varepsilon\nu(g).\]
\end{lemma}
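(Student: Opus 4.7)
The plan is to execute a Herbst-type argument: apply the LSI to the test function $f = e^{\varepsilon g/2}$, use the hypothesis $|\nabla_{\mathbb{R}^n} g|^2 \leq ag$ to bring the Dirichlet form into a manageable shape, and then integrate the resulting first-order differential inequality for the cumulant generating function $\psi(\varepsilon) \equiv \log\nu(e^{\varepsilon g})$.

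First, I would set $Z(\varepsilon) \equiv \nu(e^{\varepsilon g})$ and $\psi(\varepsilon) \equiv \log Z(\varepsilon)$, so that $\psi(0) = 0$, $Z'(\varepsilon) = \nu(g e^{\varepsilon g})$ and $\psi'(0) = \nu(g)$; note that the hypothesis forces $g \geq 0$, so these quantities are all non-negative. Substituting $f = e^{\varepsilon g/2}$ into the LSI for $\nu$ produces
\[
\varepsilon\,\nu(g e^{\varepsilon g}) - Z(\varepsilon)\log Z(\varepsilon) \;\leq\; \tilde c\,\frac{\varepsilon^2}{4}\,\nu\bigl(|\nabla g|^2 e^{\varepsilon g}\bigr) \;\leq\; \tilde c\,\frac{a\varepsilon^2}{4}\,\nu(g e^{\varepsilon g}),
\]
where the last step uses $|\nabla g|^2 \leq a g$. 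Dividing through by $Z(\varepsilon)$ and recognising $Z'(\varepsilon)/Z(\varepsilon) = \psi'(\varepsilon)$ gives the differential inequality
\[
\Bigl(\varepsilon - \tfrac{\tilde c a \varepsilon^2}{4}\Bigr)\psi'(\varepsilon) \;\leq\; \psi(\varepsilon).
\]

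Next I would integrate this inequality. Splitting $1/[\varepsilon(1 - \tilde c a\varepsilon/4)] = 1/\varepsilon + (\tilde c a/4)/(1 - \tilde c a\varepsilon/4)$ by partial fractions and integrating $\psi'/\psi$ from $\varepsilon_0 \to 0^+$ to $\varepsilon$, together with the boundary value $\lim_{\varepsilon_0 \to 0^+}\psi(\varepsilon_0)/\varepsilon_0 = \nu(g)$, yields
\[
\frac{\psi(\varepsilon)}{\varepsilon} \;\leq\; \frac{\nu(g)}{1 - \tilde c a\varepsilon/4}.
\]
For $\varepsilon \in (0, 1/(a\tilde c))$ one has $\tilde c a\varepsilon/4 < 1/4$, so the denominator exceeds $3/4$, and hence $\psi(\varepsilon) \leq (4/3)\,\varepsilon\nu(g) \leq 2\varepsilon\nu(g)$, which is the claim.

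The main obstacle I anticipate is justifying the smoothness of $\psi$ and the finiteness of $\nu(e^{\varepsilon g})$ when $g$ is unbounded, since the above manipulations implicitly differentiate under the integral sign and take the limit $\psi(\varepsilon_0)/\varepsilon_0 \to \nu(g)$. I would handle this by a standard truncation argument: replace $g$ by $g_K \equiv g \wedge K$ (which still satisfies $|\nabla g_K|^2 \leq a g_K$ almost everywhere, since $\nabla g_K$ vanishes on $\{g \geq K\}$), run the above scheme uniformly in $K$, and then pass to $K \to \infty$ via monotone convergence applied to both $\nu(e^{\varepsilon g_K})$ and $\nu(g_K)$ to recover the bound for the original $g$. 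The degenerate case $\nu(g) = \infty$ makes the conclusion trivial, and $\nu(g) = 0$ forces $g \equiv 0$ $\nu$-almost everywhere.
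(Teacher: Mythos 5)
Your proof is correct and complete: the Herbst-type argument (apply the LSI to $e^{\varepsilon g/2}$, use $|\nabla g|^2\le ag$ to turn the Dirichlet form into $\tfrac{\tilde c a\varepsilon^2}{4}Z'(\varepsilon)$ with $Z'(\varepsilon)=\nu(ge^{\varepsilon g})$, integrate the resulting differential inequality for $\psi(\varepsilon)/\varepsilon$, and observe that $\tilde c a\varepsilon/4<1/4$ yields the factor $4/3\le 2$), together with the truncation $g\wedge K$ to justify integrability and differentiation under the integral, is exactly the standard route. The paper gives no proof of this lemma and instead cites \cite{HeZeg}, whose Theorem~4.5 is established by the same Herbst argument, so your approach coincides with the intended one.
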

Note that $|\nabla_{x_i}\hat x_i^2|^2+|\nabla_{\tilde x_i}\hat x_i^2|^2=8\hat x_i^2$ and therefore by setting $\varepsilon<\frac1{8\tilde c}$ one can use Lemma \ref{expBound} to show that
\[\log e^{\varepsilon\hat{\mathbb E}_\Lambda\hat x_i^2}\leq2\varepsilon\hat{\mathbb E}_\Lambda\hat x_i^2=4\varepsilon\mathbb E_\Lambda(x_i-\mathbb E_\Lambda(x_i))^2\leq2\varepsilon\tilde c\,\mathbb E_\Lambda|\nabla_\Lambda x_i|^2=2\varepsilon\tilde c.\]
Taking this back into \eqref{relaEnt}, \eqref{FUCov}, \eqref{gradjEf2} and integrating with $\mu$, one gets
\eq{\mu\big|\nabla_j\big(\mathbb E_\Lambda |f|^2\big)^\frac12\big|^2\leq \mu|\nabla_jF|^2+C_Re^{-M|j-i|}\mu|\nabla_iF|^2 \label{sweepoutIterate}}
with $C_R\equiv\max\{1,\,\tilde c\,CA^4e^{2MR}(2R+1)^{4d}(\varepsilon^{-1}+\tilde c)\}$ independent of $i,j,\Lambda$.  

Recall that $F=(\mathbb E_{\Lambda\setminus i}|f|^2)^{1/2}$, hence we can define $F_2\equiv(\mathbb E_{\Lambda\setminus\{i,i_2\}}|f|^2)^{1/2}$ with $|j-i_2|>R$ and use \eqref{sweepoutIterate} again to bound $\nabla_jF$ by $\nabla_jF_2$ and $\nabla_{i_2}F_2$. With this idea, we may construct a sequence $\{i_n\}$ in $\Lambda$ such that $|j-i_n|$ is decreasing and the sequence stops at $i_N$ when $|j-i_{N+1}|\leq R$. Then \eqref{sweepoutIterate} can be applied inductively to bound each $\nabla_jF_n$ where $F_n\equiv(\mathbb E_{\Lambda\setminus\{i_1,i_2,...,i_n\}}|f|^2)^{1/2}$. To deal with the second term $\nabla_{i_n}F_n$ at each iteration, we shall use the following bound
\eq{\mu\big|\nabla_i\big(\mathbb E_{\Lambda'}|f|^2\big)^\frac12\big|^2\leq \mu|\nabla_if|^2+\tilde C_R\mu|\nabla_{\Lambda'}f|^2, \label{sweepoutIterate2}}
which holds for all $\Lambda'\subseteq\Lambda$, $i\notin\Lambda'$ and some $\tilde C_R\geq1$ depending only on $\tilde c,A,R,d$. \eqref{sweepoutIterate2} can be deduced by the similar strategy as for \eqref{sweepoutIterate} while keeping $f$ instead of $F$ in the beginning. Despite losing the exponential factor in this case, we can use \eqref{sweepoutIterate2} to terminate the iteration by directly getting the gradient bound on $f$. 

With \eqref{sweepoutIterate} and \eqref{sweepoutIterate2}, we now carry on the iteration as follows. At each step $n$, \eqref{sweepoutIterate} generates two gradient terms $\nabla_jF_n$ and $\nabla_{i_n}F_n$. For the $\nabla_jF_n$ term, we continue the iteration by applying \eqref{sweepoutIterate} again which generates $\nabla_jF_{n+1}$ and $\nabla_{i_{n+1}}F_{n+1}$. For the $\nabla_{i_n}F_n$ term, we terminate the iteration by applying \eqref{sweepoutIterate2} which generates $\nabla_{\Lambda\setminus\{i_1,i_2,...,i_{n-1}\}}f$. Once the iteration reaches the last step $N$ we simply use \eqref{sweepoutIterate2} to bound both $\nabla_jF_N$ and $\nabla_{i_N}F_N$, which ends the iteration with all gradient terms on $f$. The final output of this procedure is therefore
\[\mu\Big|\nabla_j\big(\mathbb E_\Lambda|f|^2\big)^\frac12\Big|^2\leq\mu|\nabla_jf|^2+C_R\tilde C_R\beta_N\mu|\nabla_{\Lambda\setminus\{i_1,...,i_N\}}f|^2+C_R\tilde C_R\sum_{n=1}^N\beta_n\mu|\nabla_{i_n}f|^2,\]
where $\beta_n\equiv\sum_{m=1}^ne^{-M|j-i_m|}\leq|\Lambda|e^{-M|j-i_n|}$ since $|j-i_n|$ is decreasing along the sequence. Rearranging the terms and merging the constants, one finally gets the sweeping out inequality \eqref{sweepout}
\[\mu\Big|\nabla_j\big(\mathbb E_\Lambda|f|^2\big)^\frac12\Big|^2\leq\sum_{i\in\mathbb Z^d}\alpha_{ji}\mu|\nabla_if|^2\]
with $0<\alpha_{ji}\leq e^{MR}C_R\tilde C_R|\Lambda|e^{-M|j-i|}$.

To get \eqref{sweepout}, let $F\equiv\mathbb E_{\Lambda\setminus i}f$ and we obtain that
\eq{|\nabla_j(\mathbb E_\Lambda f)|^2=|\nabla_j(\mathbb E_\Lambda F)|^2\leq2\mathbb E|\nabla_jF|^2+2|\mathbb E_\Lambda(F;\nabla_jU_\Lambda)|^2. \label{eqn:nablajEf2}}
Denote by $\mathcal U\equiv\mathbb E_{\Lambda\setminus i}(-\nabla_jU_\Lambda)$ and the covariance follows 
\[|\mathbb E_\Lambda(F;\nabla_jU_\Lambda)|^2=\frac14\big|\mathbb E_\Lambda\tilde{\mathbb E}_\Lambda\big((F-\tilde F)(\mathcal U-\tilde{\mathcal U})\big)\big|^2\leq\frac14\mathbb E_\Lambda\tilde{\mathbb E}_\Lambda\Big((F-\tilde F)^2(\mathcal U-\tilde{\mathcal U})^2\Big)\]
similarly as \eqref{FUCov}. Now the right hand side of \eqref{eqn:nablajEf2} is the same as \eqref{eqn:nablajEf}, up to scaling by a constant. Therefore, one can apply the same argument and conclude \eqref{sweepout2nd}.

\section{Remark on Analyticity}\label{Analyticity}

It was shown in \cite{SZ3} that, under certain technical conditions, a property of Dobrushin-Shlosman, complete analyticity, condition is equivalent to (uniform in volume and boundary conditions) Logarithmic Sobolev inequality. The proof relied on use of finite speed of propagation of information for Markov semigroups and hypercontractivity (which by the celebrated result of Len Gross is an equivalent condition to logarithmic Sobolev inequality). Below we describe a way one can obtain analyticity for infinite systems using the idea of Section \ref{LSI}.

Suppose we replace the map given by regular conditional expectation $\mathbb E_\Lambda$ by a map
\[
\mathbb E_{\Lambda,\varepsilon}(f)\equiv\frac{\mathbb E_\Lambda(e^{-\varepsilon V_\Lambda}f)}{\mathbb E_\Lambda(e^{-\varepsilon V_\Lambda})}
\]
with complex parameter $\varepsilon$ and an interaction $V_\Lambda$, such that $\mathbb E_\Lambda(e^{-\varepsilon V_\Lambda})\neq 0$. In particular this holds for sufficiently small $|\varepsilon|$ if
$V$ is a bounded measurable function. But, with additional conditions on the structure of $\mathbb E_{\Lambda,\varepsilon}$, it can also be achieved for unbounded interactions $V$. For example if the density of $\mathbb E_\Lambda$
is $e^{-U_\Lambda}/\int e^{-U_\Lambda}$
and $V_\Lambda-U_\Lambda$ is a bounded function.
We remark that for the map $\mathbb E_{\Lambda,\varepsilon}$ one can show the following 
relation
\[\mu\left|\nabla_j\mathbb E_{\Lambda,\varepsilon}(f)\right|\leq C_\varepsilon\mu\left| \nabla _j f\right| + \sum_{i\in \Lambda} C_\varepsilon \alpha_{ij}\mu\left| \nabla _i f\right| 
\]
for any sufficiently smooth complex function $f$ with some real constant $C_\varepsilon\to_{\varepsilon\to 0}1$.

With this property, defining a map  ${\Pi}_\varepsilon$ corresponding to the family $\{\mathbb E_{\Lambda,\varepsilon} \}$, one can show  for small $|\varepsilon|$ the uniform convergence of ${\Pi}_\varepsilon^n f$ for any localised Lipschitz function $f$. By our construction ${\Pi}_\varepsilon^n f$ is a sequence of analytic in $\varepsilon$ functions. Hence by the uniform convergence
the limiting function
\[
\mu_\varepsilon f \equiv \lim_{n\to\infty} {{\Pi}_\varepsilon^n}  f 
\]
is also an analytic  function of the complex parameter $\varepsilon$.
This suggest that the spectral gap is a real analytic function of $\varepsilon$ in a corresponding small neighbourhood of the original theory for which the first order sweeping out relations hold. 

\section{Bounds for Solutions of LSE} \label{LSEBound}
We now return to the logarithmic Schr\"odinger equation (LSE) problem in finite volume $\Lambda\subset\subset\mathbb Z^d$, given by \vspace{-.2cm}
\eq{\label{LSELambda}
\begin{split}
    &i\partial_t\phi=-\mathcal L_\Lambda\phi +\lambda\phi\log\frac{|\phi|^2}{\mu|\phi|^2}\\
    &\phi\vert_{t=0}=f
\end{split} \tag{LSE$_\Lambda$}
}
where $\mu$ is the infinite-volume Gibbs measure, constructed in Section \ref{2-Setup}, and $\mathcal L_\Lambda\equiv\Delta_\Lambda-\nabla_\Lambda U_\Lambda\cdot\nabla_\Lambda$. We shall consider initial data $f\in H^1(\mu)$ localised in some $\Lambda_f\subset\Lambda$ so that all initial particles are covered by the potential. Due to the finite-range interactions, the solution $\phi$ is localised in $\{j:{\rm dist}(j,\Lambda)\leq R\}$ for some $R>0$, therefore the problem (LSE$_\Lambda$) lives in finite dimensions and is well-defined. In this section we find various estimates uniformly in $\Lambda$. In the next section we show that these are crucial for constructing solutions to the infinite-volume LSE as $\Lambda\to\mathbb Z^d$.

Denote by $\norm{\cdot}_\mu\equiv\sqrt{\mu|\cdot|^2}$ the $L^2(\mu)$ norm, we first show that the mass and energy of (LSE$_\Lambda$) are conserved over time.      
\begin{thm}\label{Conservation}
For any $\Lambda\subset\subset\mathbb Z^d$, let $\phi(\cdot,t)$ be the solution of {\rm(LSE$_\Lambda$)} with initial data $f\in H^1(\mu)$. Then for all time $t\geq0$ the following conservation of mass property holds
\[\|\phi(\cdot,t)\|_\mu^2=\|f\|_\mu^2. \]
If $U_\Lambda$ is twice differentiable, the following conservation of energy property holds
\eq{\|\nabla_\Lambda\phi(\cdot,t)\|_\mu^2 +\lambda{\rm Ent}_\mu|\phi(\cdot,t)|^2
= \|\nabla_\Lambda f\|_\mu^2 +\lambda{\rm Ent}_\mu|f|^2 \label{Econv}}
where ${\rm Ent}_\mu F\equiv F\log(F/\mu F)$ denotes the $\mu$-entropy.
\end{thm}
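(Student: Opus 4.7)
The strategy is the standard one for Schr\"odinger equations with a gauge-invariant nonlinearity: multiply (LSE$_\Lambda$) by $\bar\phi$ (for mass), or by $\partial_t\bar\phi$ (for energy), and integrate against $\mu$, exploiting the reversibility of $\mathcal L_\Lambda$ established in Lemma \thmref{Lem 2.3} together with the fact that $\phi\log(|\phi|^2/\mu|\phi|^2)$ is $\phi$ times a real-valued function of $|\phi|^2$.

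For the mass, I would differentiate and substitute from the equation:
\[\partial_t\|\phi\|_\mu^2 = 2{\rm Re}\,\mu(\bar\phi\,\partial_t\phi) = 2{\rm Re}\bigl(i\mu(\bar\phi\,\mathcal L_\Lambda\phi) - i\lambda\,\mu\bigl(|\phi|^2\log(|\phi|^2/\mu|\phi|^2)\bigr)\bigr).\]
Integration by parts against $e^{-U_\Lambda}$ yields $\mu(\bar\phi\,\mathcal L_\Lambda\phi) = -\mu|\nabla_\Lambda\phi|^2\in\mathbb R$, while the log-integrand is manifestly real; multiplying a real number by $i$ and taking ${\rm Re}$ kills both contributions.

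For the energy $E(\phi)\equiv\|\nabla_\Lambda\phi\|_\mu^2 + \lambda\,{\rm Ent}_\mu|\phi|^2$, I would differentiate each piece. Integration by parts and reversibility give
\[\partial_t\|\nabla_\Lambda\phi\|_\mu^2 = 2{\rm Re}\,\mu(\nabla_\Lambda\bar\phi\cdot\nabla_\Lambda\partial_t\phi) = -2{\rm Re}\,\mu(\partial_t\phi\cdot\mathcal L_\Lambda\bar\phi).\]
Using $\partial_t|\phi|^2 = 2{\rm Re}(\bar\phi\partial_t\phi)$, the just-proved mass conservation to kill $\partial_t(\mu|\phi|^2\log\mu|\phi|^2)$, and the identity $(\log\mu|\phi|^2)\,{\rm Re}\,\mu(\bar\phi\partial_t\phi)=0$, I obtain
\[\partial_t\,{\rm Ent}_\mu|\phi|^2 = 2{\rm Re}\,\mu\bigl(\bar\phi\,\partial_t\phi\,\log(|\phi|^2/\mu|\phi|^2)\bigr).\]
Summing the two contributions,
\[\partial_t E = -2{\rm Re}\,\mu\bigl(\partial_t\phi\cdot(\mathcal L_\Lambda\bar\phi - \lambda\bar\phi\log(|\phi|^2/\mu|\phi|^2))\bigr).\]
The complex conjugate of (LSE$_\Lambda$) gives $i\partial_t\bar\phi = \mathcal L_\Lambda\bar\phi - \lambda\bar\phi\log(|\phi|^2/\mu|\phi|^2)$, so the bracket is $i\partial_t\bar\phi$, and consequently $\partial_t E = -2{\rm Re}(i\,\mu|\partial_t\phi|^2) = 0$.

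The main obstacle is rigor at the zeros of $\phi$: $\log|\phi|^2$ is singular there, so the formal differentiation of ${\rm Ent}_\mu|\phi|^2$ and the chain-rule use of $\partial_t|\phi|^2$ need justification, and one also needs enough regularity to ensure $\partial_t\phi\in L^2(\mu)$. This is the classical difficulty in logarithmic NLS; I would handle it by regularizing the nonlinearity (replacing $\log|\phi|^2$ by $\log(|\phi|^2+\varepsilon)$), performing the identities for the smooth solutions $\phi^\varepsilon$, and passing $\varepsilon\downarrow 0$ by dominated convergence. Since (LSE$_\Lambda$) is effectively finite-dimensional (the solution stays supported on the finite set $\{j:{\rm dist}(j,\Lambda)\leq R\}$), the approximation arguments from the existence theory referenced in the introduction apply without modification.
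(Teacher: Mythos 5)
Your proof is correct, and for the energy identity it takes a genuinely different route from the paper's. The mass-conservation step is the same in both: one reduces to the reality of $\mu(\phi^*\mathcal L_\Lambda\phi)$ (reversibility, Lemma \thmref{Lem 2.3}) and of $|\phi|^2\log(|\phi|^2/\mu|\phi|^2)$. For the energy, the paper differentiates $|\nabla_j\phi|^2$ directly, substitutes the equation, and computes the commutator $[\nabla_j,\mathcal L_\Lambda]=-\sum_{k}\nabla_{j}\nabla_kU_\Lambda\cdot\nabla_k$; the Hessian contribution cancels by symmetry after summing over $j\in\Lambda$, the term $\mu\big((\nabla_j\phi^*)\mathcal L_\Lambda(\nabla_j\phi)\big)$ drops by reversibility, and the surviving gradient term is then cancelled against $\partial_t{\rm Ent}_\mu|\phi|^2$, whose integrand turns out to be its complex conjugate. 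You instead use the classical multiplier $\partial_t\bar\phi$: one integration by parts (the Dirichlet-form identity, again Lemma \thmref{Lem 2.3}) collapses $\partial_tE$ to $-2{\rm Re}\big(i\,\mu|\partial_t\phi|^2\big)=0$. Your route is shorter and never touches $\nabla_j\nabla_kU_\Lambda$ --- the commutator computation is precisely where the paper's ``twice differentiable'' hypothesis enters --- but it needs $\partial_t\phi\in L^2(\mu)$, which you correctly flag as the regularity cost of the method. The regularizations also differ: the paper smooths the solution by the semigroup $e^{-\varepsilon\mathcal L_\Lambda}$ and passes to the limit, whereas you regularize the nonlinearity as $\log(|\phi|^2+\varepsilon)$, the standard device in the logarithmic NLS literature; either handles the singularity of $\log|\phi|^2$ at the zeros of $\phi$, so this is a difference of implementation rather than of substance.
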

\begin{proof}
The mass conservation easily follows from
\[\partial_t\mu|\phi|^2=2{\rm Re}(\mu(\phi^*\partial_t\phi))=-2{\rm Im}\big(\mu(\phi^*\mathcal L_\Lambda\phi)\big)=0\]
using (LSE$_\Lambda$) and the reversibility of $\mu$ for $\mathcal L_\Lambda$, due to Lemma \ref{Lem 2.3}. To obtain the energy conservation, we will similarly show that
\eq{\partial_t\big(\mu|\nabla_\Lambda\phi|^2+\lambda{\rm Ent}_\mu|\phi|^2\big)=0. \label{EtimeDerivative}}
In what follows we consider regularised solution $e^{-\varepsilon \mathcal L_\Lambda}\phi$ of (LSE$_\Lambda$) and pass to the limit with $\varepsilon$ to $0$ at the end. Keeping this in mind, below we provide the stream of essential steps omitting explicit reference to $\varepsilon$. First we note that for any $j\in\mathbb Z^d$ the time evolution of $|\nabla_j\phi|^2$ follows
\eq{\partial_t|\nabla_j\phi|^2=2{\rm Im}\bigg(-\nabla_j(\mathcal L_\Lambda\phi)\cdot\nabla_j\phi^*+\lambda\nabla_j\bigg(\phi\log\frac{|\phi|^2}{\mu|\phi|^2}\bigg)\cdot\nabla_j\phi^*\bigg). \label{partialtgradjphi}}
Rewrite the first product as
\[\nabla_j(\mathcal L_\Lambda\phi)\cdot\nabla_j\phi^*=[\nabla_j,\mathcal L_\Lambda]\phi\cdot\nabla_j\phi^*+(\nabla_j\phi^*)\mathcal L_\Lambda(\nabla_j\phi) \]
where the commutator follows $[\nabla_j,\mathcal L_\Lambda]=-\sum_{k\in\Lambda}\nabla_{jk}U_\Lambda\cdot\nabla_k$ with $\nabla_{jk}\equiv\nabla_j\nabla_k$. Inserting this into \eqref{partialtgradjphi} and integrating with $\mu$, one gets
\begin{align}
    \partial_t\mu|\nabla_j\phi|^2=\ &{\rm Im}\bigg(2\mu\sum_{k\in\Lambda}\nabla_{jk}U_\Lambda\cdot\nabla_k\phi\cdot\nabla_j\phi^*\bigg)-{\rm Im}\bigg(2\mu\Big((\nabla_j\phi^*)\mathcal L_\Lambda(\nabla_j\phi)\Big)\bigg) \label{gradphi1}\\
    &+{\rm Im}\Bigg(2\lambda\mu\bigg(\nabla_j\bigg(\phi\log\frac{|\phi|^2}{\mu|\phi|^2}\bigg)\cdot\nabla_j\phi^*\bigg)\Bigg). \label{gradphi2}
\end{align}
By summing over $j\in\Lambda$, the first term in \eqref{gradphi1} vanishes due to symmetry and the second term also vanishes by the reversibility of $\mu$. Therefore we only have the last term being summed over, which gives
\eq{\partial_t\mu|\nabla_\Lambda\phi|^2={\rm Im}\Big(2\lambda\mu\big(\nabla_\Lambda(\phi\log|\phi|^2)\cdot\nabla_\Lambda\phi^*\big)\Big). \label{gradjphilog}}
For the entropy term, using mollification argument with the invariance of the $L^2$ property we have
\[\frac\partial{\partial t}{\rm Ent}_\mu|\phi|^2=
-{\rm Im}\Big(2\mu\big(\phi^*\log|\phi|^2\mathcal L_\Lambda\phi\big)\Big)-{\rm Im}\Big(2\mu\Big(\phi^*\mathcal L_\Lambda\phi+\lambda(1+\log|\phi|^2){\rm Ent}_\mu|\phi|^2\Big)\Big).\]
The second term vanishes again due to the reversibility of $\mu$ and therefore we get
\eq{\frac\partial{\partial t}{\rm Ent}_\mu|\phi|^2=-{\rm Im}\Big(2\mu\big(\phi^*\log|\phi|^2\mathcal L_\Lambda\phi\big)\Big)={\rm Im}\Big(2\mu\big(\nabla_\Lambda(\phi^*\log|\phi|^2)\cdot\nabla_\Lambda\phi\big)\Big) \label{entboundstep}}
using integration by parts. By noticing that the measure in \eqref{entboundstep} is the complex conjugate of the measure in \eqref{gradjphilog}, one easily verifies \eqref{EtimeDerivative} and hence the energy conversation property.
\end{proof}

\begin{remark}
The mass conservation property holds true with any measure $\mu_\Lambda$ and the operator $\mathcal L_\Lambda$ corresponding to the Dirichlet form $\mu_\Lambda(f \mathcal L_\Lambda g) = -\mu_\Lambda(\nabla_\Lambda f \cdot \nabla_\Lambda g).$
\end{remark} 

With similar steps one can construct time-dependent bounds for the gradient and the entropy. In particular, we show that these bounds can be uniform in time if $\mu$ satisfies a log-Sobolev inequality (e.g. it satisfies Theorem \ref{LSImu}).

\begin{thm}
\label{GradEntbound}
For any $\Lambda\subset\subset\mathbb Z^d$ let $\phi$ be the solution of $({\rm LSE}_\Lambda)$ with localised initial data $f\in H^1(\mu)$. If $U_\Lambda$ is twice differentiable, then for all $t\geq0$ one has the gradient bound 
\eq{\|\nabla_\Lambda\phi(\cdot,t)\|_\mu^2\leq e^{2|\lambda|t}\|\nabla f\|_\mu^2
\label{gradbound}}
and the entropy bound
\vspace{-.3cm}
\begin{equation}\label{Entbound}
{\rm Ent}_\mu|\phi(\cdot,t)|^2\leq{\rm Ent}_\mu|f|^2+\frac{1}{|\lambda|} (e^{2|\lambda| t }-1 )\|\nabla f\|_\mu^2.
\end{equation}
If $\lambda\,{>}\,0$ and $\mu$ satisfies a log-Sobolev inequality with coefficient $c\,{>}\,0$, one has the time-uniform bounds
\eq{\|\nabla_\Lambda\phi(\cdot,t)\|_\mu^2\leq(1+\lambda c)\|\nabla f\|_\mu^2\quad\text{and}\quad{\rm Ent}_\mu|\phi(\cdot,t)|^2\leq(1/\lambda+c)\|\nabla f\|_\mu^2.
\label{uniformBound}}
\end{thm}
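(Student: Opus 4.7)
The plan is to combine the differential identities from the proof of Theorem~\thmref{Conservation} with a Grönwall argument for the gradient bound, then use energy conservation to derive the entropy bound, and finally obtain the time-uniform bounds directly from energy conservation together with the LSI for $\mu$.

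For the gradient bound I start from identity \eqref{gradjphilog}, namely $\partial_t\mu|\nabla_\Lambda\phi|^2 = 2\lambda\,{\rm Im}\,\mu\bigl(\nabla_\Lambda(\phi\log|\phi|^2)\cdot\nabla_\Lambda\phi^*\bigr)$. Using $\nabla_j\log|\phi|^2=(\nabla_j\phi)/\phi+(\nabla_j\phi^*)/\phi^*$, the summand at site $j$ expands as
\[
\nabla_j(\phi\log|\phi|^2)\cdot\nabla_j\phi^* \;=\; (\log|\phi|^2+1)\,|\nabla_j\phi|^2 \;+\; \frac{\phi}{\phi^*}\bigl(\nabla_j\phi^*\bigr)^2.
\]
The first summand is real and drops out of the imaginary part, while $|\phi/\phi^*|=1$ pointwise gives $\bigl|(\phi/\phi^*)(\nabla_j\phi^*)^2\bigr|\leq|\nabla_j\phi|^2$. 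Summing over $j\in\Lambda$ yields $|\partial_t\mu|\nabla_\Lambda\phi|^2|\leq 2|\lambda|\,\mu|\nabla_\Lambda\phi|^2$, so Grönwall's lemma gives \eqref{gradbound}, after noting that $\|\nabla_\Lambda f\|_\mu=\|\nabla f\|_\mu$ since $f$ is localised in $\Lambda_f\subset\Lambda$.

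For the entropy bound \eqref{Entbound} I differentiate the energy conservation identity \eqref{Econv} to get $\lambda\,\partial_t{\rm Ent}_\mu|\phi|^2=-\partial_t\|\nabla_\Lambda\phi\|_\mu^2$, so that $|\partial_t{\rm Ent}_\mu|\phi|^2|\leq 2\mu|\nabla_\Lambda\phi|^2\leq 2e^{2|\lambda|t}\|\nabla f\|_\mu^2$ by the gradient bound; integrating from $0$ to $t$ produces \eqref{Entbound}. For the time-uniform statements with $\lambda>0$, I use \eqref{Econv} exactly: since ${\rm Ent}_\mu|\phi|^2\geq 0$, dropping this term gives $\|\nabla_\Lambda\phi\|_\mu^2\leq\|\nabla f\|_\mu^2+\lambda\,{\rm Ent}_\mu|f|^2\leq(1+\lambda c)\|\nabla f\|_\mu^2$, where the last step applies LSI to $f$. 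Dropping instead $\|\nabla_\Lambda\phi\|_\mu^2\geq 0$ in \eqref{Econv} yields $\lambda\,{\rm Ent}_\mu|\phi|^2\leq\|\nabla f\|_\mu^2+\lambda c\|\nabla f\|_\mu^2$, which rearranges to the second bound in \eqref{uniformBound}.

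The main obstacle is the rigorous justification of the formal differentiation through the singularities of $\log|\phi|^2$ and $\phi/\phi^*$ at the zeros of $\phi$. Following the strategy noted after \eqref{EtimeDerivative} in the proof of Theorem~\thmref{Conservation}, I would first carry out the estimates above for the regularised solution $e^{-\varepsilon\mathcal L_\Lambda}\phi$, for which all expressions are smooth and integrable uniformly in $\varepsilon$, and then pass to the limit $\varepsilon\to 0$ using mass and energy conservation together with dominated convergence to preserve the inequalities.
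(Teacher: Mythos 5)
Your proof is correct and takes essentially the same route as the paper: the gradient bound via the identity \eqref{gradjphilog}, the pointwise estimate $|{\rm Im}(\phi\,\nabla_j\phi^*\,\nabla_j\log|\phi|^2)|\leq|\nabla_j\phi|^2$ and Gr\"onwall, and the time-uniform bounds by dropping one nonnegative term in \eqref{Econv} and applying the LSI to $f$. The only divergence is the entropy bound, which the paper gets by bounding $\partial_t{\rm Ent}_\mu|\phi|^2$ directly from \eqref{entboundstep} while you deduce the same inequality $|\partial_t{\rm Ent}_\mu|\phi|^2|\leq2\mu|\nabla_\Lambda\phi|^2$ by dividing the energy-conservation identity by $\lambda$; this is equivalent (and your regularisation remark matches the paper's), the one caveat being that your route needs $\lambda\neq0$, which is harmless since \eqref{Entbound} is only meaningful in that case anyway.
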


\begin{proof}
Following the proof of Theorem \eqref{Conservation}, the estimate \eqref{gradjphilog} leads to
\[\partial_t\mu|\nabla_\Lambda\phi|^2\leq2|\lambda|\sum_{j\in\Lambda}\mu\Big|\,{\rm Im}\big(\phi\nabla_j\phi^*\big)\big(\nabla_j\log|\phi|^2\big)\Big|\leq2|\lambda|\sum_{j\in\Lambda}\mu|\nabla_j\phi|^2.\]
Using Gr\"onwall's lemma one easily obtains the gradient bound \eqref{gradbound}. For the entropy bound, a follow-up calculation of \eqref{entboundstep} shows that
\[\frac\partial{\partial t}{\rm Ent}_\mu|\phi|^2=-2\sum_{j\in\Lambda}{\rm Im}\bigg(\mu\bigg(\frac{\phi^*}{\phi}(\nabla_j\phi)^2\bigg)\bigg)\leq2\mu|\nabla_\Lambda\phi|^2\leq2e^{2|\lambda|t}\mu|\nabla f|^2,\]
where the last inequality is by \eqref{gradbound} that we just verified. Integrating with respect to $t$ and one obtains the entropy bound \eqref{Entbound}. The uniform bounds \eqref{uniformBound} easily follow from the log-Sobolev inequality of $\mu$ and the energy conservation property \eqref{Econv}.
\end{proof}

With the gradient bound of $\nabla_\Lambda\phi$ inside $\Lambda$, we now proceed to estimate $\nabla_j\phi$ for all $j\in\mathbb Z^d$. Due to the finite-range interactions, one can show that the norm of $\nabla_j\phi$ decays exponentially as $j$ moves away from the initial region $\Lambda_f$. This property corresponds to the finite speed propagation of information, which is presented in the following theorem.

\begin{thm} \label{finiteprop}
For any $\Lambda\subset\subset\mathbb Z^d$ let $\phi$ be the solution of {\rm(LSE$_\Lambda$)} with initial data $f\in H_1(\mu)$ localised in some $\Lambda_f\subset\subset\mathbb Z^d$. Suppose the potential $U_\Lambda$ satisfies $\|\nabla_j\nabla_kU_\Lambda\|_\infty\leq A$ for all $\Lambda\subset\subset\mathbb Z^d$, $j,k\in\mathbb Z^d$ with $j\neq k$ and some $A>0$ independent of $i,j,\Lambda$. Then the following estimate holds
\[\|\nabla_j\phi(\cdot,t)\|_\mu^2\leq e^{-N_j}\|\nabla f\|_\mu^2 \]
for any $j\in\mathbb Z^d$ and all time $t\leq \varepsilon N_j$ where $\varepsilon^{-1}\equiv9(2|\lambda|+A)(2R+1)^{2d}$ is the propagation speed and $N_j$ is the distance $N_j\equiv\left\lfloor{\rm dist}(j,\Lambda_f)R^{-1}\right\rfloor$.
\end{thm}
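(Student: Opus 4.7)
The plan is to derive a closed site-wise differential inequality for $h_j(t)\equiv\|\nabla_j\phi(\cdot,t)\|_\mu^2$ that only couples sites within distance $R$, and then to propagate the vanishing of $h_j(0)$ for $j\notin\Lambda_f$ through a Duhamel--Stirling iteration, obtaining exponential smallness of $h_j(t)$ on a time window proportional to $N_j$.

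First, I would repeat the calculation producing \eqref{gradphi1}--\eqref{gradphi2} in the proof of Theorem \ref{GradEntbound} \emph{without} summing over $j$. The middle term $\mu\bigl((\nabla_j\phi^{\ast})\mathcal L_\Lambda(\nabla_j\phi)\bigr)$ is eliminated by reversibility of $\mu$ with respect to $\mathcal L_\Lambda$ (Lemma \ref{Lem 2.3}). In the commutator $[\nabla_j,\mathcal L_\Lambda]=-\sum_k(\nabla_{jk}U_\Lambda)\nabla_k$, the finite range of $U_\Lambda$ restricts $k$ to $|k-j|\leq R$; the diagonal $k=j$ term is real and drops out under $\im$, while for $k\neq j$ the hypothesis $\|\nabla_{jk}U_\Lambda\|_\infty\leq A$ combined with Cauchy--Schwarz yields $|\mu(\nabla_{jk}U_\Lambda\nabla_k\phi\,\overline{\nabla_j\phi})|\leq\tfrac{A}{2}(h_j+h_k)$. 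The logarithmic nonlinearity contributes at most $2|\lambda|h_j$ exactly as in Theorem \ref{GradEntbound}. Collecting,
\[\partial_t h_j(t)\;\leq\;C_1\sum_{k:\,|k-j|\leq R}h_k(t),\qquad C_1\equiv 2|\lambda|+A(2R+1)^d.\]

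Next I would aggregate along level sets of $N_j$, setting $G_n(t)\equiv\sum_{j:\,N_j\geq n}h_j(t)$. Because $|k-j|\leq R$ forces $\dist(k,\Lambda_f)\geq\dist(j,\Lambda_f)-R$, and hence $N_k\geq N_j-1$, swapping the order of summation and noting that each $k$ is hit by at most $(2R+1)^d$ sites $j$ yields
\[\partial_t G_n(t)\;\leq\;C_2\,G_{n-1}(t),\qquad C_2\equiv C_1(2R+1)^d\;\leq\;(2|\lambda|+A)(2R+1)^{2d}.\]
Gronwall at $n=0$ produces $G_0(t)\leq e^{C_2 t}\|\nabla f\|_\mu^2$, while the localisation of $f$ in $\Lambda_f$ gives $G_n(0)=0$ for every $n\geq 1$. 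Inductive application of Duhamel's formula then yields
\[G_n(t)\;\leq\;\frac{(C_2t)^n}{n!}\sup_{0\leq s\leq t}G_0(s)\;\leq\;\frac{(C_2t)^n}{n!}\,e^{C_2 t}\|\nabla f\|_\mu^2.\]

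Finally I would convert factorial to exponential decay via Stirling's inequality $n!\geq (n/e)^n$: taking $n=N_j$,
\[h_j(t)\;\leq\;G_{N_j}(t)\;\leq\;\Big(\frac{eC_2t}{N_j}\Big)^{N_j}e^{C_2 t}\|\nabla f\|_\mu^2.\]
Choosing $\varepsilon^{-1}=9(2|\lambda|+A)(2R+1)^{2d}\geq 9C_2$ ensures that, on the window $t\leq\varepsilon N_j$, both $eC_2t/N_j\leq e/9$ and $C_2t\leq N_j/9$ hold, so the right-hand side is bounded by $\bigl(\tfrac{e}{9}e^{1/9}\bigr)^{N_j}\|\nabla f\|_\mu^2\leq e^{-N_j}\|\nabla f\|_\mu^2$, which is the claim. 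The main obstacle I anticipate is tracking constants carefully enough that the Stirling cancellation $(C_2t)^{N_j}/N_j!$ beats the Gronwall growth $e^{C_2t}$ on the same window; once the site-wise differential inequality is in hand, the remaining combinatorics is a standard Duhamel iteration.
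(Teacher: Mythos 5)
Your proof is correct and follows essentially the same route as the paper: the same site-wise differential inequality obtained from the commutator $[\nabla_j,\mathcal L_\Lambda]$ with the diagonal term dropped and the off-diagonal terms bounded by $A$, the same Duhamel iteration exploiting that $\nabla_j f=0$ off $\Lambda_f$ so the first $N_j$ terms vanish, and the same Stirling estimate producing the factor $9$ in the propagation speed. Your aggregation over the level sets $G_n(t)=\sum_{j:\,N_j\geq n}h_j(t)$ is just a tidier repackaging of the paper's explicit path-sum $\sum_{k_1,\dots,k_n}A_{jk_1}\cdots A_{k_{n-1}k_n}\leq A_*^n$, not a different argument.
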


\begin{proof} 
\noindent Using the calculation in \eqref{gradphi1}, \eqref{gradphi2} and \eqref{gradjphilog}, we have for any $j\in\mathbb Z^d$ the estimate
\eq{\partial_t\big(\mu|\nabla_j\phi|^2\big)\leq\,2|\lambda|\,\mu|\nabla_j\phi|^2+\sum_{k\in\Lambda\setminus\{j\}}\|\nabla_{jk}U_\Lambda\|_\infty\big(\mu|\nabla_k\phi|^2+\mu|\nabla_j\phi|^2\big). \label{gradjIneq1}}
Assuming the uniform bound on $|\nabla_{jk}U_\Lambda|$ holds, one gets the following differential inequality
\eq{\partial_t\big(\mu|\nabla_j\phi|^2\big)\leq\sum_{k\in\mathbb Z^d}A_{jk}\,\mu|\nabla_k\phi|^2 \label{sumAjkbound}}
where $A_{jj}=2|\lambda|+A(2R+1)^d$, $A_{jk}=A$ if $0<|j-k|\leq R$ and $A_{jk}=0$ otherwise.
We will use this inequality to get an improved estimate on $\|\nabla_j\phi\|_\mu$ when the initial data $f$ is a local function. First we remark that as a consequence of \eqref{gradbound} and \eqref{gradjIneq1} it follows
\[\partial_t\mu|\nabla_j\phi|^2\leq A_{jj}\mu|\nabla_j\phi|^2+A\mu|\nabla_\Lambda\phi|^2\leq A_{jj}\mu|\nabla_j\phi|^2+Ae^{2|\lambda|t}\mu|\nabla f|^2.\]
By Theorem A in \cite{YJY}, the above inequality implies $\mu|\nabla_j\phi(\cdot,t)|^2\leq ae^{\varepsilon t}\mu|\nabla f|^2$ with $a\equiv 1+\frac A{2|\lambda|}$ and $\varepsilon\equiv A_{jj}+2|\lambda|$ independent of $j$. Therefore, the following estimate
\eq{\bar\phi_t \equiv\sup_{k\in\mathbb Z^d}\sup_{s\in[0,t]}\mu|\nabla_k\phi(\cdot,s)|^2\leq ae^{\varepsilon t}\mu|\nabla f|^2 \label{barphi}}
holds for any finite time $t\geq0$. Let $f\in H_1(\mu)$ be localised in some finite volume $\Lambda_f\subset\subset\mathbb Z^d$. Taking into account that the interaction is of finite range $R$ we will show that within a fixed time period $\|\nabla_j\phi\|_\mu$ decays exponentially with ${\rm dist}(j,\Lambda_f)$. To this end, let $N_j\equiv\left\lfloor{\rm dist}(j,\Lambda_f)R^{-1}\right\rfloor$ and consider the integral inequality
\eq{\mu|\nabla_j\phi(\cdot,t)|^2\leq\mu|\nabla_jf|^2+\sum_{k\in\mathbb Z^d}A_{jk}\int_0^tdt_1\,\mu|\nabla_k\phi(\cdot,t_1)|^2 \label{integralAjkbound} }
deduced from \eqref{sumAjkbound}. Note that $\nabla_jf$ is zero if $j\notin\Lambda_f$, thus we can iterate \eqref{integralAjkbound} for $M>N_j$ times until we get a nonzero derivative of $f$. 
This yields
\begin{align*}
    \mu|\nabla_j\phi(\cdot,t)|^2&\leq\sum_{n=N_j}^M\frac{t^n}{n!}\sum_{k_1,...,k_n\in\mathbb Z^d}(A_{jk_1}...\,A_{k_{n-1}k_n} )\mu|\nabla_{k_n}f|^2 \nonumber\\
    &+\sum_{k_1,...,k_{M+1}\in\mathbb Z^d}(A_{jk_1}...\,A_{k_Mk_{M+1}} )\int_0^t\cdots\int_0^{t_M}dt_{M+1}\,\mu\left|\nabla_{k_{M+1}}\phi(\cdot,t_{M+1})\right|^2.
\end{align*}
Denoting by $A_*=A_{jj}(2R+1)^d$, the above sum is then bounded by  
\[\left(\sum_{n=N_j}^M\frac{(A_*t)^n}{n!}\mu|\nabla  f|^2 \right)+\frac{(A_*t)^M}{M!}\bar\phi_t \]
with $\bar\phi_t$ defined in \eqref{barphi}. Since $\bar\phi_t<\infty$ for all finite time $t$, sending $M\to\infty$ leads to 
\[\mu|\nabla_j\phi(\cdot,t)|^2\leq\sum_{n=N_j}^\infty\frac{(A_*t)^n}{n!}
\mu|\nabla  f|^2 \leq\frac{(A_*t)^{N_j}}{N_j!}e^{A_*t}\mu|\nabla  f|^2,
\]
where by Stirling's formula $n!\geq n^ne^{-n}$ one has
\[\frac{(A_*t)^{N_j}}{N_j!}e^{A_*t}\leq\exp\bigg(N_j\bigg(\log(A_*)+1-\log\frac{N_j}t+A_*\frac t{N_j}\bigg)\bigg).\]
For all $t\leq\frac{N_j}{9A_*}$ it easily follows $\log(A_*)+1-\log(N_j/t)+A_*(t/N_j)\leq-1$, which concludes the proof of Theorem \thmref{finiteprop}.
\end{proof}

We note that where $U_\Lambda$ is given by the bilinear or perturbed bilinear potentials, detailed in Section \ref{LSI}, we have the above estimates in Theorem \ref{Conservation}, \ref{GradEntbound}, \ref{finiteprop}. Recall that $U_\Lambda$ is the potential for the ground state representation of the original solution $\psi\equiv\phi e^{-\frac12U_\Lambda}$. For consistency, in the following remark we relate $U_\Lambda$ back to the original potential $V_\Lambda$ by the differential equation \eqref{UV}, namely $V_\Lambda=-\frac12\Delta_\Lambda U_\Lambda+\frac14|\nabla_\Lambda U_\Lambda|^2+\lambda U_\Lambda$.

\begin{remark}
Consider the bilinear example of $U_\Lambda$ with conditions given in Example \ref{Exmp:bilinear}. Assuming $C_{jk}=C_{kj}$ for simplicity we have
\eq{V_\Lambda(x)=D_\Lambda+\sum_{j,k\in\Lambda_R}D_{jk}x_jx_k, \label{Vbilinear}}
where $\Lambda_R\equiv\{j:{\rm dist}(j,\Lambda)\leq R\}$,\ \,$D_\Lambda\equiv-\text{\raisebox{0.3pt}{\scalebox{0.9}{$\sum_{j\in\Lambda}$}}}C_{jj}$, and\ \,$D_{jk}\equiv\lambda C_{jk}{\bf1}_{(j\in\Lambda)\lor(k\in\Lambda)}+\text{\raisebox{0.3pt}{\scalebox{0.9}{$\sum_{l\in\Lambda}$}}}C_{jl}C_{kl}$. With the conditions on $C_{jk}$, it follows that $D_{jk}$ is uniformly bounded and of finite range $2R$. In case $\lambda$ is positive, the quadratic coefficient $D_{jj}$ is uniformly bounded below by some positive constant. If $\lambda$ is negative and large in absolute value (e.g. $\lambda\,{<}\,{-}\sup_jC_{jj}$ when $C_{ij}$ is diagonal), the quadratic terms become negative and delicate issues may arise. For the latter case, we note that LSE with negative bilinear $V$ part were recently considered in \cite{CS} as an interesting mathematics situation where non-unique positive stationary solutions could exist, with each one generating a continuous family of solitary waves. We remark that large negative values of $\lambda$ correspond in our setup to the loss of log-Sobolev inequality under which one has uniqueness and strict positivity of the ground state.

For the perturbed bilinear case in Example \ref{Exmp:bilinearPerturbed}, denote by $Y_\Lambda\equiv\text{\raisebox{0.3pt}{\scalebox{0.9}{$\sum_{X\subset\subset\mathbb Z^d:X\cap\Lambda\neq\varnothing}$}}}W_X$ and assume $C_{jk}$ is symmetric for simplicity, then in this case $V_\Lambda$ takes the quadratic form
\[V_\Lambda(x)=D_\Lambda+\sum_{k\in\Lambda_R}D_kx_k+\sum_{j,k\in\Lambda_R}D_{jk}x_jx_k,\]
where $\Lambda_R$ and $D_{jk}$ are defined as in \eqref{Vbilinear} with the coefficient $D_k\equiv\varepsilon\text{\raisebox{0.3pt}{\scalebox{0.9}{$\sum_{j\in\Lambda}$}}}C_{jk}(\nabla_jY_j)$ and the bounded perturbation 
\[D_\Lambda=\lambda\varepsilon Y_\Lambda+\sum_{j\in\Lambda}\Big(-C_{jj}+\frac{\varepsilon^2}4|\nabla_jY_j|^2-\frac\varepsilon2\Delta_jY_j\Big).\]
It is easy to check that $V_\Lambda$ has uniformly bounded coefficients $D_k,D_{jk}$ and is of finite range $2R$.
\end{remark}

Finally, for any $\Lambda\subset\subset\mathbb Z^d$ consider the setting with removed boundary conditions, $(U_\Lambda^\circ,\mathbb E_\Lambda^\circ)$ defined in Example \ref{mucirc}, where $U_\Lambda^\circ\equiv\text{\raisebox{0.2pt}{\scalebox{0.9}{$\sum_{X\subseteq\Lambda}$}}}J_X$ is the potential strictly inside $\Lambda$ and $\mathbb E_\Lambda^\circ$ is the local Gibbs measure corresponding to $U_\Lambda^\circ$. The LSE in $\Lambda$ with no boundary conditions is given by
\eq{\label{LSEcirc}
\begin{split}
    &i\partial_t\phi_\Lambda=-\mathcal L_\Lambda^\circ\phi_\Lambda+\lambda\phi_\Lambda\log\frac{|\phi_\Lambda|^2}{\mathbb E_\Lambda^\circ|\phi_\Lambda|^2}\\
    &\phi_\Lambda\vert_{t=0}=f
\end{split} \tag{LSE$_\Lambda^\circ$}
}
with $\mathcal L_\Lambda^\circ\equiv\Delta_\Lambda-\nabla_\Lambda U_\Lambda^\circ\cdot\nabla_\Lambda$ and initial condition $f$ localised in $\Lambda_f\subset\Lambda$. For this setup, we have the following results.

\begin{coro} \label{coro:circEstimates}
If the $\mu$-estimates in Theorem \ref{Conservation}, \ref{GradEntbound}, \ref{finiteprop} hold for \eqref{LSELambda}, they remain true for \eqref{LSEcirc} with $\mu$ replaced by $\mathbb E_\Lambda^\circ$. That is, for the same conditions on $U_\Lambda^\circ$, one has the conservation property
\[\mathbb E_\Lambda^\circ|\phi_\Lambda|^2=\mathbb E_\Lambda^\circ|f|^2\quad{\rm and}\quad\mathbb E_\Lambda^\circ|\nabla\phi_\Lambda|^2 +\lambda{\rm Ent}_{\mathbb E_\Lambda^\circ}|\phi_\Lambda|^2
= \mathbb E_\Lambda^\circ|\nabla f|^2 +\lambda{\rm Ent}_{\mathbb E_\Lambda^\circ}|f|^2,\]
the gradient and entropy bounds 
\[\mathbb E_\Lambda^\circ|\nabla\phi_\Lambda|^2\leq e^{2|\lambda|t}\mathbb E_\Lambda^\circ|\nabla f|_\mu^2\quad{\rm and}\quad{\rm Ent}_{\mathbb E_\Lambda^\circ}|\phi_\Lambda|^2\leq{\rm Ent}_{\mathbb E_\Lambda^\circ}|f|^2+|\lambda|^{-1}(e^{2|\lambda| t }-1 )\mathbb E_\Lambda^\circ|\nabla f|^2,\]
and the finite-speed propagation property 
\eq{\mathbb E_\Lambda^\circ|\nabla_j\phi_\Lambda|^2\leq e^{-N_j}\mathbb E_\Lambda^\circ|\nabla f|^2 \label{eqn:circfiniteprop}}
for all $j\in\mathbb Z^d$ and time $t\leq \varepsilon N_j$, where $\varepsilon$ and $N_j$ are defined as in Theorem \ref{finiteprop}.
\end{coro}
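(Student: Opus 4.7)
The plan is to observe that the proofs of Theorems \ref{Conservation}, \ref{GradEntbound}, and \ref{finiteprop} depend only on a handful of structural ingredients---reversibility of the measure with respect to the associated generator, integration by parts in finitely many variables, the commutator identity $[\nabla_j,\mathcal L]=-\sum_k\nabla_j\nabla_kU\cdot\nabla_k$, and uniform bounds on the mixed derivatives $\nabla_j\nabla_kU$---each of which has an exact analogue when $(\mu,U_\Lambda,\mathcal L_\Lambda)$ is replaced by $(\mathbb E_\Lambda^\circ,U_\Lambda^\circ,\mathcal L_\Lambda^\circ)$. The bulk of the work is therefore a verification that these ingredients transfer, after which the three proofs can be reproduced line by line.

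First I would record the transfers. Since $\mathbb E_\Lambda^\circ$ has density $e^{-U_\Lambda^\circ}/Z^\circ$ on $\mathbb R^\Lambda$ and $\mathcal L_\Lambda^\circ=\Delta_\Lambda-\nabla_\Lambda U_\Lambda^\circ\cdot\nabla_\Lambda$, the integration-by-parts argument of Lemma \ref{Lem 2.3} gives at once $\mathbb E_\Lambda^\circ(\mathcal L_\Lambda^\circ f)=0$ and $\mathbb E_\Lambda^\circ(f\mathcal L_\Lambda^\circ g)=\mathbb E_\Lambda^\circ(g\mathcal L_\Lambda^\circ f)$. Next, because $U_\Lambda^\circ$ depends only on variables in $\Lambda$ and the initial data $f$ is localised in $\Lambda_f\subset\Lambda$, the solution $\phi_\Lambda$ of \eqref{LSEcirc} is supported on $\mathbb R^\Lambda$, so every integration by parts appearing in the $\mu$-proofs reduces to an honest finite-dimensional one against $\mathbb E_\Lambda^\circ$. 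Finally, the hypothesis $\|\nabla_i\nabla_jU_\Lambda\|_\infty\le A$ is inherited by $U_\Lambda^\circ$, since the latter is obtained simply by discarding those $J_X$ with $X\cap\Lambda^{\mathsf c}\neq\varnothing$, and the finite-range structure is preserved.

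With these in place, the mass conservation is immediate from $\partial_t\mathbb E_\Lambda^\circ|\phi_\Lambda|^2=-2\,\mathrm{Im}\,\mathbb E_\Lambda^\circ(\phi_\Lambda^*\mathcal L_\Lambda^\circ\phi_\Lambda)=0$ by reversibility; the energy conservation follows from the identities \eqref{gradjphilog}--\eqref{entboundstep}, which rely only on the commutator $[\nabla_j,\mathcal L_\Lambda^\circ]=-\sum_{k\in\Lambda}\nabla_j\nabla_kU_\Lambda^\circ\cdot\nabla_k$ (identical in form) and on the reversibility just established. The Gr\"onwall derivation of \eqref{gradbound} and \eqref{Entbound} is then unchanged, provided one notes that mass conservation makes the normaliser $\mathbb E_\Lambda^\circ|\phi_\Lambda|^2$ time-independent, which is what allows the entropy derivative to reduce to \eqref{entboundstep}. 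For Theorem \ref{finiteprop} the key ingredient is the pointwise inequality \eqref{gradjIneq1} on $\partial_t\mathbb E_\Lambda^\circ|\nabla_j\phi_\Lambda|^2$ together with the baseline bound \eqref{barphi}; both are consequences of items already transferred, and the iteration of \eqref{integralAjkbound} then delivers \eqref{eqn:circfiniteprop} with the same constants $\varepsilon$ and $N_j$.

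The only place where care is genuinely needed is the order of the transfers: the finite-speed-of-propagation argument borrows $\bar\phi_t\le a\,e^{\varepsilon t}\mathbb E_\Lambda^\circ|\nabla f|^2$ from the (already-transferred) gradient bound, so Theorem \ref{GradEntbound} must be proved for $(\text{LSE}_\Lambda^\circ)$ before Theorem \ref{finiteprop}. No new analytic difficulty arises; the proof of the corollary is therefore essentially a bookkeeping exercise asserting that the proofs of the earlier theorems never used anything specific about $\mu$ beyond the four structural features listed above.
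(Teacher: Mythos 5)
Your proposal is correct and takes essentially the same route as the paper, which states the corollary without a formal proof and relies implicitly on exactly the observation you make: that the earlier proofs use only reversibility, integration by parts, the commutator identity, and the uniform bound on $\nabla_i\nabla_jU$, all of which transfer verbatim to $(\mathbb E_\Lambda^\circ,U_\Lambda^\circ,\mathcal L_\Lambda^\circ)$, together with the fact that the solution of \eqref{LSEcirc} is localised entirely in $\Lambda$ in the absence of boundary conditions. Your explicit bookkeeping (including the ordering remark that the gradient bound must be transferred before the finite-speed argument) simply makes the paper's implicit reasoning precise.
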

Note that in the above theorem we have uniform estimates for the total gradient $\nabla\phi_\Lambda$ instead of $\nabla_\Lambda\phi_\Lambda$ since the solution $\phi_\Lambda$ of \eqref{LSEcirc} is localised entirely in $\Lambda$ due to non-existence of boundary conditions. In the next section, we will use the finite-volume problem \eqref{LSEcirc} with the estimates in Corollary \ref{coro:circEstimates} to construct solutions of the infinite-volume LSE problem.

\section{Existence of Weak Solutions in Infinite Volume}\label{Convergence}
In this section we establish the existence of weak solutions to the infinite-volume LSE by finite-dimensional approximations. Namely, we will construct a sequence $\{\Lambda_n\}$, increasing in volume, such that the sequence of solutions $\phi_{\Lambda_n}$ of (LSE$_{\Lambda_n}^\circ$) is compact in $H^1(\mu)$. Then we choose a suitable convergent subsequence whose limit defines a weak infinite-volume solution.

We consider \eqref{LSEcirc} since it lives entirely inside $\mathbb R^{\Lambda}$, which is technically simpler and for which we already have existence and uniqueness of the solutions. When studying uniqueness of the infinite-volume solution, it is therefore convenient to compare the solution obtained from \eqref{LSEcirc} with the one from other setups such as \eqref{LSELambda}.

For simplicity, we shall hereafter work on the space $\Omega\equiv\mathbb R^{\mathbb Z}$ and assume the multi-particle interaction is bounded, i.e. the potentials $J_X$ have bounded derivatives of order $n=0,1,2$ uniformly in $X$ for all $|X|\geq2$. For the higher dimensional case see Remark \ref{rmk:1dLattice}. Moreover, we assume that the measures $\mathbb E_\Lambda$ and $\mathbb E_\Lambda^\circ$ associated to $\{J_X\}$ satisfy a log-Sobolev inequality with coefficient uniform in $\Lambda$. With these conditions, we are ready to introduce the existence theorem.
\begin{thm} \label{existence}
Let $\Omega=\mathbb R^{\mathbb Z}$ and suppose $\{J_X\}$ satisfies the above conditions. For any $\lambda\in\mathbb R$ and $f\in H^1(\mu)$, if the Cauchy problem {\rm\eqref{LSEcirc}} with initial data $f$ and coupling constant $\lambda$ admits at least one solution for all $\Lambda\subset\subset\mathbb Z^d$, then there exists a weak solution to the infinite-volume LSE problem with initial data $f$ and coupling constant $\lambda$, i.e. there exists $\phi\in H^1(\mu)$ such that
\eqN{
\begin{split}
    &\mu\left(g i\partial_t \phi\right) = \mu\bigg(\nabla g\cdot\nabla\phi+g\lambda \phi\log\frac{|\phi|^2}{\mu|\phi|^2}\bigg)\\
    &\phi(t=0)=f
\end{split}
}
for all smooth and compactly supported local  functions $g:\Omega\to\mathbb C$.
\end{thm}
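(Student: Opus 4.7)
My plan is a finite-volume approximation. Take an exhausting sequence $\Lambda_n\subset\subset\mathbb{Z}$ with $\Lambda_f\subset\Lambda_1$ and $\Lambda_n\uparrow\mathbb{Z}$, and for each $n$ let $\phi_n$ denote a solution of \eqref{LSEcirc} on $\Lambda_n$ with initial data $f$ (existence is assumed). The goal is to extract a subsequence converging to some $\phi\in H^1(\mu)$ satisfying the weak formulation on a fixed time interval $[0,T]$.

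First I would collect uniform a priori bounds. By Corollary \ref{coro:circEstimates}, mass is conserved, the gradient obeys $\mathbb{E}_{\Lambda_n}^\circ|\nabla\phi_n(\cdot,t)|^2\leq e^{2|\lambda|T}\mathbb{E}_{\Lambda_n}^\circ|\nabla f|^2$, the entropy is controlled on $[0,T]$, and the finite-speed propagation \eqref{eqn:circfiniteprop} gives $\mathbb{E}_{\Lambda_n}^\circ|\nabla_j\phi_n|^2\leq e^{-N_j}\mathbb{E}_{\Lambda_n}^\circ|\nabla f|^2$ for sites $j$ with $\mathrm{dist}(j,\Lambda_f)\gtrsim T$. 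Since $\phi_n$ is localised in $\Lambda_n$ and the multi-spin part of $J_X$ is uniformly bounded, the marginal of $\mu$ on $\mathbb{R}^{\Lambda_n}$ compares to $\mathbb{E}_{\Lambda_n}^\circ$ up to a bounded density ratio, so these bounds transfer to uniform bounds under $\mu$. In particular $\{\phi_n\}$ is bounded in $L^\infty([0,T];H^1(\mu))$ and $\nabla_j\phi_n$ decays exponentially in $\mathrm{dist}(j,\Lambda_f)$.

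Next I would build a compact subsequence. Weak-$*$ compactness in $L^\infty([0,T];H^1(\mu))$ extracts a limit $\phi$. For strong convergence on cylinders, fix $\Lambda\subset\subset\mathbb{Z}$: the marginal of $\mu$ on $\mathbb{R}^\Lambda$ has a log-concave density, and the uniform $H^1$ bound together with tightness (from exponential moment estimates implied by the uniform LSI) gives Rellich compactness on bounded sets, yielding strong $L^2_{\mathrm{loc}}$ convergence. Time equicontinuity follows from \eqref{LSEcirc}, since $\partial_t\phi_n = i\mathcal L_{\Lambda_n}^\circ\phi_n - i\lambda\phi_n\log(|\phi_n|^2/\mathbb{E}_{\Lambda_n}^\circ|\phi_n|^2)$ is bounded in a suitable negative Sobolev norm by the $H^1$ and entropy bounds. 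An Aubin-Lions type argument combined with a diagonal extraction over an exhaustion by cylinders then yields $\phi_n\to\phi$ strongly in $L^2_{\mathrm{loc}}(dt\,d\mu)$ and $\mu$-a.e.\ along a further subsequence.

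The final step, and the main obstacle, is passing to the limit in the weak formulation. The linear terms $\mu(g\,i\partial_t\phi_n)$ and $\mu(\nabla g\cdot\nabla\phi_n)$ converge by weak $H^1$-convergence, since $g$ is smooth with compact cylindrical support. The difficulty is the nonlinearity $\lambda\mu(g\phi_n\log(|\phi_n|^2/\mu|\phi_n|^2))$: the map $z\mapsto z\log|z|^2$ is continuous but neither Lipschitz near zero nor of polynomial growth. However, strong $L^2$ and $\mu$-a.e.\ convergence on $\mathrm{supp}(g)$, together with uniform integrability of $|\phi_n|^2|\log|\phi_n||$ under $|g|\,d\mu$ (which follows from the uniform bound on $\mathrm{Ent}_\mu|\phi_n|^2$ via Theorem \ref{muLSI}, the uniform $L^2$ bound, and the elementary observation that $x^2|\log x|\to 0$ as $x\to 0^+$), allow Vitali's theorem to pass through the nonlinearity. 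The normalisation $\mu|\phi_n|^2$ tends to $\mu|\phi|^2>0$ by mass conservation, and the initial condition $\phi(0)=f$ is recovered from time-continuity. The upshot is the required weak solution $\phi\in H^1(\mu)$.
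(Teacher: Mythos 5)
Your overall strategy (finite-volume approximation via \eqref{LSEcirc} on an exhausting sequence, uniform bounds from Corollary \ref{coro:circEstimates}, extraction of a limit, Vitali for the nonlinearity) matches the paper's, but the compactness mechanism you propose diverges from the paper's and contains the one genuine gap. You assert that ``the normalisation $\mu|\phi_n|^2$ tends to $\mu|\phi|^2>0$ by mass conservation.'' Mass is conserved only with respect to $\mathbb E_{\Lambda_n}^\circ$, i.e.\ $\mathbb E_{\Lambda_n}^\circ|\phi_n|^2=\mathbb E_{\Lambda_n}^\circ|f|^2$, and this reference measure changes with $n$; the bounded density ratio $\rho_n=d\mathbb E_{\Lambda_n}/d\mathbb E_{\Lambda_n}^\circ$ gives only two-sided \emph{bounds} on $\mu|\phi_n|^2$, not convergence. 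Proving that $\mu|\phi_n|^2\to\mu|f|^2$ is in fact the central technical content of the paper's proof: it requires the covariance decomposition \eqref{eqn:nucov}, the mixing condition of Theorem \ref{thm:Mixing}, the finite-speed propagation estimate, and an interpolation of potentials to control $\nu_{n+1}|f|^2-\nu_n|f|^2$. The paper then combines this norm convergence with weak $H^1(\mu)$-compactness to get \emph{strong} $L^2(\mu)$ convergence by the elementary Hilbert-space fact that weak convergence plus convergence of norms implies strong convergence --- precisely the role your Rellich/Aubin--Lions argument is meant to play.

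Your alternative compactness route is not obviously wrong, but as written it is incomplete in a way specific to the infinite-dimensional setting. The functions $\phi_n$ depend on all of $x_{\Lambda_n}$ with $\Lambda_n\uparrow\mathbb Z$, so there is no canonical ``restriction to a cylinder'' on which to apply Rellich; you would first have to replace $\phi_n$ by conditional expectations onto finitely many coordinates and show the error is small uniformly in $n$ (plausible via finite-speed propagation summed over far sites together with a spectral gap for the exterior conditional measures, but this is not in your sketch). Moreover, the compactness of $H^1\hookrightarrow L^2$ for the finite-dimensional marginals requires confinement of the potential, not merely the LSI, and the marginal of a Gibbs measure need not be log-concave as you claim. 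If you repair the limit of $\mu|\phi_n|^2$ --- either by carrying out the paper's covariance/mixing estimates or by completing the localisation argument so that strong $L^2(\mu)$ convergence is genuinely established --- the remainder of your argument (weak convergence of the linear terms, Vitali for $\phi_n\log|\phi_n|^2$ using the subcritical growth of $z\mapsto z\log|z|^2$ and the uniform $L^2$ and entropy bounds, positivity of $\mu|\phi|^2$ from the density bounds) goes through.
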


\begin{proof}[Proof of Theorem \ref{existence}]

Let $f\in H^1(\mu)$ be localised in $\Lambda_f\subset[-L,L]$ for some $L\geq R$, where $R$ is the interaction range. Consider the sequence $\Lambda_n\equiv[-8nL,8nL]\cap\mathbb Z$ and suppose \eqref{LSEcirc} with initial condition $f$ admits at least one solution  $\phi_n\equiv\phi_{\Lambda_n}$ for all $n\in\mathbb N$. We will prove the following convergence of the norms 
\eq{
\lim_{n\to\infty} \mu|\phi_n|^2=\mu|f|^2. \label{normConvergence}}
Denote by $\rho_n\equiv d\mathbb E_{\Lambda_n}/d\mathbb E_{\Lambda_n}^\circ$ the density ratio, we can rewrite $\mu|\phi_n|^2=\mu\mathbb E_\Lambda|\phi_n|^2=\mu\mathbb E_\Lambda^\circ(|\phi_n|^2\rho_n)$. With the notation $\nu_n\equiv\mu\mathbb E_{\Lambda_n}^\circ$ and a notation $\nu(g;h)\equiv\nu(gh)-\nu(g)\nu(h)$ for the $\nu$-covariance, we then have 
\eq{\mu|\phi_n|^2=\nu_n(|\phi_n|^2;\rho_n)+\big(\nu_n|\phi_n|^2\big)\big(\nu_n(\rho_n)\big)=\nu_n(|\phi_n|^2;\rho_n)+\nu_n|f|^2 \label{eqn:phicov}}
since $\mathbb E_{\Lambda_n}^\circ(\rho_n)=1$ and $\mathbb E_{\Lambda_n}^\circ|\phi_n|^2=\mathbb E_{\Lambda_n}^\circ|f|^2$ by the mass conservation property in Corollary \ref{coro:circEstimates}. Using \eqref{eqn:phicov} for the norms of $\phi_n$ and $\phi_{n+1}$, one has the following estimate
\eq{\Big|\mu |\phi_{n+1}|^2 - \mu |\phi_{n}|^2\Big|\leq
\Big|\nu_n\big(|\phi_{n}|^2;\rho_n\big)\Big|+\Big|\nu_{n+1}\big(|\phi_{n+1}|^2;\rho_{n+1}\big)\Big|+\Big|\nu_{n+1}|f|^2-\nu_n|f|^2\Big|. 
\label{muphidifference}}

To bound the first covariance, let $\Gamma=([-12nL,-4nL]\cup[4nL,12nL])\cap\mathbb Z$ and we rewrite the covariance by
\eq{\nu_n(|\phi_{n}|^2;\rho_n)=\nu_n\big(|\phi_{n}|^2-\mathbb E_\Gamma|\phi_n|^2;\rho_n\big)+\nu_n(\mathbb E_\Gamma|\phi_n|^2;\rho_n). \label{eqn:nucov}}
To proceed, we first note that if the lattice is one-dimensional and $J_X$ is uniformly bounded for $|X|\geq2$ then $\rho_n$ is bounded uniformly in $n$ by $1/B_1\leq\rho_n\leq B_1$, where
\eq{B_1=\exp\bigg(4\sup_{k\in\mathbb Z}\bigg(\text{\scalebox{0.8}{\raisebox{2pt}{$\sum_{\substack{X\ni\,k,\,|X|\geq2}}$}}\scalebox{0.9}{$\|J_X\|_\infty$}}\bigg)\bigg) \label{densityBound}}
is finite since the interaction is of finite range. Hence, for any function $g\geq0$ one has $\mu(g)\leq B_1\nu_n(g)$ and $\nu_n(g)\leq B_1\mu(g)$. With this density bound, the first covariance in \eqref{eqn:nucov} can be estimated by
\eq{\big|\nu_n\big(|\phi_n|^2-\mathbb E_\Gamma|\phi_n|^2;\rho_n\big)\big|\leq 2B_1\mu\big||\phi_n|^2-\mathbb E_\Gamma|\phi_n|^2\big|\leq2B_1\mu\mathbb E_\Gamma\tilde{\mathbb E}_\Gamma\big||\phi_n|^2-|\tilde\phi_n|^2\big|. \label{eqn:EgammaDouble}}
Applying the same argument as in \eqref{FUCov} on the double measure $\mathbb E_\Gamma\tilde{\mathbb E}_\Gamma$, it follows
\[\eqref{eqn:EgammaDouble}\leq4B_1\mu\Big(\big(\mathbb E_\Gamma|\phi_n|^2\big)^{1/2}\big(\mathbb E_\Gamma|\phi_n-\mathbb E_\Gamma\phi_n|^2\big)^{1/2}\Big)\leq4(B_1)^2\big(\mathbb E_\Gamma^\circ|f|^2\big)^{1/2}\mu\big(\mathbb E_\Gamma|\nabla_\Gamma\phi_n|^2\big)^{1/2},\]
where we used the mass conservation for $\mathbb E_\Gamma^\circ$ and the spectral gap inequality for $\mathbb E_\Gamma$. Using the finite-speed propagation property \eqref{eqn:circfiniteprop} for $\mathbb E_\Gamma^\circ|\nabla_\Gamma\phi_n|^2$, one has the estimate 
\[\eqref{eqn:EgammaDouble}\leq D_1|\Lambda_n|e^{-\lfloor{\rm dist}(\Gamma,\Lambda_f)/R\rfloor}(\mu|f|^2)^{1/2}(\mu|\nabla f|^2)^{1/2}\]
with $D_1\equiv4(B_1)^5$. By the construction of $\Gamma$ and $\Lambda_n$, we have ${\rm dist}(\Gamma,\Lambda_f)\geq\frac18{\rm diam}(\Lambda_n)$ and therefore there exists $\varepsilon_1>0$ independent of $n$ such that
\eq{\big|\nu_n\big(|\phi_n|^2-\mathbb E_\Gamma|\phi_n|^2;\rho_n\big)\big|\leq D_1|\Lambda_n|e^{-\varepsilon_1{\rm diam}(\Lambda_n)}\big(\mu|f|^2+\mu|\nabla f|^2\big). \label{eqn:phincov1}}
Here we note that due to the finite-speed propagation property \eqref{eqn:circfiniteprop}, the estimate \eqref{eqn:phincov1} only holds in the time range $t\leq C{\rm diam}(\Lambda_n)$ for some $C>0$ independent of $n$. As $n\to\infty$ in the sequence, this time range will eventually extend to $[0,\infty)$ and the above estimate becomes global in time. 

To bound the second covariance in \eqref{eqn:nucov}, we use the mixing condition \eqref{StrongMixing} for the measure $\mathbb E_{\Lambda_n}^\circ$, which holds true since $\mathbb E_{\Lambda_n}^\circ$ satisfies a uniform LSI and $\nabla_{ij}U_\Lambda^\circ$ is bounded due to the bounded multi-particle interaction. Therefore, there exist $C_{\rm mix},M>0$ independent of $n$ such that
\[\big|\mathbb E_{\Lambda_n}^\circ(\mathbb E_\Gamma|\phi_n|^2;\rho_n)\big|\leq C_{\rm mix}|\Lambda_\Gamma||\Lambda_{\rho_n}|e^{-M{\rm dist}(\Gamma^{\mathsf c},\Lambda_{\rho_n})}\Big(\mathbb E_{\Lambda_n}^\circ\big|\nabla_{\Lambda_n}\mathbb E_\Gamma|\phi_n|^2\big|^2\Big)^\frac12\Big(\mathbb E_{\Lambda_n}^\circ|\nabla_{\Lambda_n}\rho_n|^2\Big)^\frac12\]
where $\Lambda_\Gamma,\,\Lambda_{\rho_n}$ are the sets where $\mathbb E_\Gamma|\phi_n|^2$ and $\rho_n$ are localised, respectively. First, we note that both $|\Lambda_\Gamma|$ and $|\Lambda_{\rho_n}|$ are smaller than $|\Lambda_n|$. Moreover, since ${\rm dist}(\Gamma^{\mathsf c},\Lambda_{\rho_n})\geq\frac18{\rm diam}(\Lambda_n)$ there exists $\varepsilon_2>0$ independent of $n$ such that the exponential term is bounded by $e^{-\varepsilon_2{\rm diam}(\Lambda_n)}$. To estimate the $\nabla_{\Lambda_n}\rho_n$ term, we use the following bound
\[\|\nabla_{\Lambda_n}\rho_n\|_\infty\leq 2B_1\sup_{k\in\mathbb Z}\bigg(\text{\scalebox{0.8}{\raisebox{2pt}{$\sum_{\substack{X\ni\,k,\,|X|\geq2}}$}}\scalebox{0.9}{$\|\nabla J_X\|_\infty$}}\bigg)\equiv B_2\]
where $B_2$ is finite since $J_X$ is of finite range and has uniformly bounded derivatives for $|X|\geq2$. For the gradient $\nabla_{\Lambda_n}\mathbb E_\Gamma|\phi_n|^2$, direct calculation gives  
\[\nabla_j\mathbb E_\Gamma|\phi_n|^2=\mathbb E_\Gamma\big(\nabla_j|\phi_n|^2)-\mathbb E_\Gamma\big(|\phi_n|^2;\nabla_jU_\Gamma)\]
for any $j\notin\Gamma$. The first term is simply bounded by $\mathbb E_\Gamma|\phi_n|^2+\mathbb E_\Gamma|\nabla_j\phi_n|^2$ due to Young's inequality. For the covariance, an application of the relative entropy inequality (similar as \eqref{relaEnt}) and Lemma \ref{expBound} gives
\[\big|\mathbb E_\Gamma\big(|\phi_n|^2;\nabla_jU_\Gamma)\big|\leq C_{RE}|\Lambda_n|\big(\mathbb E_\Gamma|\phi_n|^2+\mathbb E_\Gamma|\nabla_j\phi_n|^2\big)\]
for some $C_{RE}>0$ independent of $n$. Combining all the estimates, we have
\[\big|\mu\mathbb E_{\Lambda_n}^\circ(\mathbb E_\Gamma|\phi_n|^2;\rho_n)\big|\leq B_2C_{\rm mix}(1+C_{RE})|\Lambda_n|^4e^{-\varepsilon_2{\rm diam}(\Lambda_n)}\big(\mu|\phi_n|^2+\mu|\nabla\phi_n|^2\big).\]
Using the mass conservation for $\mathbb E_{\Lambda_n}^\circ|\phi_n|^2$ and the gradient bound for $\mathbb E_{\Lambda_n}^\circ|\nabla\phi_n|^2$, it follows
\eq{\big|\mu\mathbb E_{\Lambda_n}^\circ(\mathbb E_\Gamma|\phi_n|^2;\rho_n)\big|\leq D_2|\Lambda_n|^4e^{-\varepsilon_2{\rm diam}(\Lambda_n)}\big(\mu|f|^2+e^{2|\lambda|t}\mu|\nabla f|^2\big) \label{eqn:phincov2}}
with $D_2\equiv B_1^2B_2C_{\rm mix}(1+C_{RE})$. Taking \eqref{eqn:phincov1} and \eqref{eqn:phincov2} into \eqref{eqn:nucov}, we arrive at
\eq{\big|\nu_n(|\phi_{n}|^2;\rho_n)\big|\leq D_{12}|\Lambda_n|^4e^{-\varepsilon_{12}{\rm diam}(\Lambda_n)}\big(\mu|f|^2+e^{2|\lambda|t}\mu|\nabla f|^2\big) \label{eqn:nucovbound}}
with $D_{12}\equiv\max\{D_1,D_2\}$ and $\varepsilon_{12}\equiv\min\{\varepsilon_1,\varepsilon_2\}$. This concludes the estimate of the first term in \eqref{muphidifference}. The second term in \eqref{muphidifference} follows similarly and therefore we will finish the proof of \eqref{normConvergence} by estimating the third term in \eqref{muphidifference}.

To this end, for any $\theta\in[0,1]$ we define the interpolating potential
\[U_\theta^\circ\equiv U_{\Lambda_n}^\circ+\text{\scalebox{0.8}{\raisebox{2pt}{$\sum_{k\in\Lambda_{n+1}\setminus\Lambda_n}$}}\scalebox{0.9}{$J_{\{k\}}$}}+\text{\scalebox{0.8}{\raisebox{2pt}{$\sum_{\substack{X\subseteq\Lambda_{n+1},\,|X|\geq2\\X\cap(\Lambda_{n+1}\setminus\Lambda_n)\neq\varnothing}}$}}\scalebox{0.9}{$\theta J_X$}}\]
and the corresponding local Gibbs measure $d\mathbb E_\theta^\circ\equiv\big(\int e^{-U_\theta^\circ}dx_{\Lambda_{n+1}}\big)^{-1}e^{-U_\theta^\circ}dx_{\Lambda_{n+1}}$. Then one has the following integral representation
\[\mathbb E_{\Lambda_{n+1}}^\circ|f|^2-\mathbb E_{\Lambda_n}^\circ|f|^2=\int_0^1d\theta\frac d{d\theta}\mathbb E_\theta^\circ|f|^2,\]
where the $\theta$-derivative follows $\frac d{d\theta}\mathbb E_\theta^\circ|f|^2=-\mathbb E_\theta^\circ\big(|f|^2;\frac d{d\theta}U_\theta^\circ\big)$. When the lattice is one-dimensional, the size $|\Lambda_{n+1}\setminus\Lambda_n|$ is independent of $n$ and therefore the density $d\mathbb E_{\Lambda_{n+1}}^\circ/d\mathbb E_\theta^\circ$ is bounded uniformly in $n$ and $\theta$. Then by the arguments of \cite[Lemma~5.1]{HS}, if $\mathbb E_{\Lambda_{n+1}}^\circ$ satisfies a uniform LSI the measure $\mathbb E_\theta^\circ$ also satisfies a LSI uniformly in $n$ and $\theta$. Thus, Theorem \ref{thm:Mixing} holds for $\mathbb E_\theta^\circ$ and one has the mixing condition 
\[\big|\mathbb E_\theta^\circ\big(|f|^2;\tfrac d{d\theta}U_\theta^\circ\big)\big|\leq C_{\rm mix}'|\Lambda_\theta||\Lambda_f|e^{-M'{\rm dist}(\Lambda_\theta,\Lambda_f)}\Big(\mathbb E_\theta^\circ\big|\nabla_{\Lambda_{n+1}}|f|^2\big|^2\Big)^\frac12\Big(\mathbb E_\theta^\circ\big|\nabla_{\Lambda_{n+1}}\big(\tfrac d{d\theta}U_\theta^\circ\big)\big|^2\Big)^\frac12\]
for some $C_{\rm mix}',M'>0$ independent of $n$ and $\theta$, where $\Lambda_\theta$ is the set where $\frac d{d\theta}U_\theta^\circ$ is localised in. By the construction of $\Lambda_n$, we have $|\Lambda_\theta||\Lambda_f|\leq|\Lambda_n|^2$ and ${\rm dist}(\Lambda_\theta,\Lambda_f)\geq\frac14{\rm diam}(\Lambda_n)$. Moreover, the gradient on $\frac d{d\theta}U_\theta^\circ$ is bounded by
\[\norm{\nabla_{\Lambda_{n+1}}\big(\tfrac d{d\theta}U_\theta^\circ\big)}_\infty\leq\sup_{n\in\mathbb N}\bigg(\text{\scalebox{0.8}{\raisebox{5pt}{$\sum_{\substack{X\cap(\Lambda_{n+1}\setminus\Lambda_n)\neq\varnothing\\|X|\geq2}}$}}\scalebox{0.9}{\raisebox{2pt}{$\|\nabla J_X\|_\infty$}}}\bigg)\equiv B_3.\]
Finally, by noticing that $\mathbb E_\theta^\circ(g)\leq B_1\mu(g)$ for all $\theta\in[0,1]$ and $g\geq0$, one gets the estimate
\eq{\Big|\nu_{n+1}|f|^2-\nu_n|f|^2\Big|\leq D_3|\Lambda_n|^2e^{-\varepsilon_3{\rm dist}(\Lambda_n)}\big(\mu|f|^2+\mu|\nabla f|^2\big) \label{eqn:nufdifference}}
with $D_3\equiv B_1B_3C_{\rm mix}'$ and $\varepsilon_3\equiv\frac14M'$. Gathering \eqref{eqn:nucovbound} and \eqref{eqn:nufdifference} in \eqref{muphidifference}, we have the bound
\[\Big|\mu |\phi_{n+1}|^2 - \mu |\phi_{n}|^2\Big|\leq D|\Lambda_n|^4e^{-\varepsilon{\rm diam}(\Lambda_n)}\big(\mu|f|^2+e^{2|\lambda|t}\mu|\nabla f|^2\big)\]
for some $D,\varepsilon>0$ independent of $n$. Since the series $\sum_n|\Lambda_n|^4e^{-\varepsilon{\rm diam}(\Lambda_n)}$ converges, the real sequence $\mu|\phi_n|^2$ is Cauchy and thus convergent to some $C_{\rm lim}\geq0$. Using the same argument with triangle inequality, one can show that $\big|\mu|\phi_n(t_1)|^2-\mu|\phi_n(t_2)|^2\big|$ tends to zero for any time $t_1,t_2$. Hence, the limit $C_{\rm lim}$ is independent of time and therefore identical to $\mu|f|^2$.

Using consideration of Section \ref{LSEBound} and in particular Theorem \ref{Conservation} and \ref{GradEntbound}, in the situations described by\eqref{Econv} and \eqref{Entbound} we have a compactness of the set of the solutions of \eqref{LSEcirc} associated to the sequence $\Lambda_n$ invading $\mathbb Z$. In case when entropy term is bounded we get that the sequence of solutions $\phi_n$ is weakly compact 
and since the corresponding sequence of the norms 
$\mu|\phi_n|^2$ is convergent, by general principles this implies that in fact our sequence converges strongly in $L^2(\mu)$. Hence by general principles there exists a subsequence $\phi_{n_m}$, $m\in\mathbb{N}$, which converges $\mu$-a.e. to some function $\phi\in L^2(\mu)$.
Hence we can conclude that $\phi$ satisfies the infinite-volume LSE in the ultra weak sense, i.e. for any smooth and compactly supported local  functions $g:\Omega\to\mathbb C$ one has
\[
\mu\left(g i\partial_t \phi\right) = \mu\left((-\mathcal Lg) \phi +
g\lambda \phi\log\frac{|\phi|^2}{\mu|\phi|^2}\right) 
\]
where $\mathcal L\equiv\lim_{\Lambda\to\mathbb Z}\mathcal L_\Lambda$ is well-defined for all $g$. Since the theorems in Section \ref{LSEBound} provide us with uniform bound in the corresponding $H_1(\mu)$ space we could choose a suitable weakly convergent subsequence (for which also the nonlinear term would converge as we have also uniform bound for entropy). The limit of such the sequence would satisfy the following weak form of the equation
\[
\mu\left(g i\partial_t \phi\right) = \mu\left(\nabla g\cdot\nabla \phi +
g\lambda \phi\log\frac{|\phi|^2}{\mu|\phi|^2}\right). 
\]
It is possible that following the ideas of \cite{KOZ} one could develop a technique for higher order estimates which would allow us to get a strong convergence of approximating sequence corresponding to a smooth bounded local initial data and reasonable interactions (including Gaussian perturbed by smooth short range multi-particle interactions). 
\end{proof}

In the case of the bilinear interaction, we give an explicit characterisation of the bounded multi-particle interaction condition.

\begin{exmp}
For any $X\subset\subset\mathbb Z$, consider the following perturbed quadratic potential
\[J_X(x)=\left\{\begin{tabular}{ c l }
$C_jx_j^2$&\ \ {\rm if\ }$X=\{j\}$\\
$\varepsilon W_X(x)$&\ \ {\rm if\ }$|X|\geq2$  
\end{tabular}\right.\]
where $\varepsilon\in\mathbb R$, $W_X\in C^2(\mathbb R^X)$ has uniformly in $X$ bounded derivatives of order $n=0,1,2$ and $W_X\equiv 0$ if diam$(X)>R$. Suppose $C^-\leq C_j\leq C^+$ for some $0<C^-\leq C^+$, then for sufficiently small $|\varepsilon|$ the measures $\mathbb E_\Lambda$ and $\mathbb E_\Lambda^\circ$ both satisfy a uniform LSI by Remark \ref{rmk:BE}. Moreover, by \cite[Proposition~1.3]{CFexistence}, the Cauchy problem \eqref{LSEcirc} associated to the potential $\{J_X\}$ admits a unique solution for all $\lambda\in\mathbb R$ and localised initial condition $f\in H^1(\mu)$. Therefore, all conditions in Theorem \ref{existence} are satisfied and there exists a solution of the infinite-volume LSE associated to the perturbed quadratic potential on $\mathbb R^{\mathbb Z}$.  
\end{exmp}

\begin{remark}\label{rmk:1dLattice}
    The restriction to $d=1$ is required for the finite quantity $B_1$, \eqref{densityBound}, since the interaction-range, $R$, extension of the finite cubes contain a fixed number of points. In higher dimensions, the size of the extensions grow as the cubes get larger. This could be offset with further assumptions on the potential.
\end{remark}

\section{Solitons}\label{Soliton}

In finite dimensions the solitons for the logarithmic 
Schr{\"o}dinger equation, introduced originally in   \cite{BBM3}, were discussed in number of works see e.g.   \cite{A1}, \cite{A2}, \cite{BH}, \cite{BSSSSCh}, \cite{CF}, \cite{Co}, \cite{CoC}, \cite{CoMM}, \cite{ES}, \cite{F3}, \cite{GSS1}, \cite{HAL}, \cite{HR}, \cite{KEB}, \cite{LeC}, \cite{MM}, \cite{MMT1}, \cite{MMT2}, \cite{Sc}, \cite{ZZ}.
In $\mathbb R^n$ one can try construct general solutions to the following LSE
\[i\partial_t\psi=-\Delta\psi+V\psi+\lambda\psi\log|\psi|^2\]
by performing separation of variables so that $\psi(t,x)=e^{-iEt}\phi(x)$, where $\phi:\mathbb R^n\to\mathbb R$ is the stationary state satisfying the eigen-equation
\eq{-\Delta\psi+V\psi+\lambda\psi\log|\psi|^2=E\psi \label{eigeneq}}
with energy eigenvalue $E\in\mathbb R$. For the sake of simplicity, we shall first consider $V=0$ and try to solve 
\eq{-\Delta\psi+\lambda\psi\log|\psi|^2=E\psi. \label{simpleeigen}} 
Inspired by the results in \cite{BBM1}, we take the soliton-like ansatz $\psi=\exp(-\frac a2|x|^2)$ into \eqref{simpleeigen} and find that $a=-\lambda$ provides a solution with $E=-\lambda n$. Letting $\psi_0=\exp(-\frac\lambda2|x|^2)$, one can readily see that for any nonzero $b\in\mathbb R$, $\psi_b\equiv b\psi_0$ is also a solution to \eqref{simpleeigen} with eigenvalue $E_b=-\lambda(n-\log b^2)$. In order $\psi$ to be normalizable, we require $\lambda<0$ and the normalised solution is $\psi=(-2\pi/\lambda)^\frac n2\exp(-\frac\lambda2|x|^2)$ with eigenvalue $E=-\lambda n(1-\log(-2\pi/\lambda))$. Next, we investigate \eqref{eigeneq} with a quadratic potential $V=\text{\raisebox{0.1pt}{\scalebox{0.9}{$\sum_{j=1}^n$}}}a_jx_j^2$ where $a_j>0$, and this time we use a more general ansatz $\psi=\exp(-U(x))$ to get 
\[\Delta U-|\nabla U|^2-2\lambda U=E-\sum_{j=1}^na_jx_j^2.\]
Choosing $U=\text{\raisebox{0.3pt}{\scalebox{0.9}{$\sum_{j=1}^n$}}}\frac12c_jx_j^2$, with $c_j=\frac12(\sqrt{4a_j+\lambda^2}-\lambda)>0$, one gets a normalised solution   $\psi=\text{\raisebox{0.4pt}{\scalebox{0.8}{$\sqrt{(2\pi)^n/\text{\raisebox{1pt}{$\scriptstyle\prod$}}_jc_j}$}}}\exp(-\text{\raisebox{0.7pt}{$\scriptstyle\sum$}}_jc_jx_j^2)$ with eigenvalue $E=\lambda n\log(2\pi)+\text{\raisebox{0.7pt}{$\scriptstyle\sum$}}_j(c_j-\lambda\log c_j)$.

Use of Galilean covariance provides us with a soliton type solution \cite{BH}. However for more general potential this transformation requires suitable adaptation of the potential.

In the case of composite systems consisting of a number of elementary entities (atoms, molecules, cells, etc) one would also like to model states where the wave function changes in time, with concentration to a given part of the system. 

In infinite dimensional systems with nontrivial interaction the typical situation is more complicated.
Under log-Sobolev inequality, (thanks to Lemma \ref{LSItoSGI}), assuming $\lambda > -\frac{\varepsilon}{c}$ with some $\varepsilon\in[0,1)$, we have the following analog of spectral gap 
property
\[
\mu\left(  -\phi^\ast \mathcal L\phi+\lambda  |\phi|^2\log\frac{|\phi|^2}{\mu |\phi|^2} \right) \geq \frac{2(1-\varepsilon)}{c} \left(\frac{1}{c}+\lambda\right)\mu|\phi|^2
.\]
However, in general the structure of spectrum is not discrete. In the best case what one encounters (for example in
statistical mechanics see e.g.
\cite{MT}, \cite{MT} or QFT see e.g. \cite{GJ} and references there in) are separated windows of continuous spectrum. Thus the classical notion of soliton may not make sense.
However it is possible that states with soliton behaviour on some large timescale could be reasonable. It would be interesting to develop such theory (with potential applications e.g. to quantum computing).

\bibliographystyle{ieeetr}
\bibliography{Reference.bib}

\end{document}